\documentclass{amsart}

\usepackage{geometry}
\usepackage{macros}
\hypersetup{
    pdftitle = {Cartier-Dieudonné theory of connected group schemes},
    pdfauthor = {Casimir Kothari, Joshua Mundinger},
}

\usepackage{enumitem}
\setlist[enumerate]{label = \roman*.}

\newcommand{\uM}{\underline{M}}

\DeclareMathOperator{\aPolyd}{\alepho-Polyd}
\DeclareMathOperator{\aFLG}{\alepho-FLG}
\DeclareMathOperator{\aCFLG}{\aleph-CFLG}
\DeclareMathOperator{\SpInd}{SpInd}
\DeclareMathOperator{\Prim}{Prim}

\newcommand{\bR}{{\overline{R}}}
\newcommand{\Ghat}{\widehat{\mathbb{G}}}

\theoremstyle{plain}
\newtheorem{theorem}{Theorem}[subsection]
\newtheorem{lemma}[theorem]{Lemma}
\newtheorem{proposition}[theorem]{Proposition}
\newtheorem{corollary}[theorem]{Corollary}

\newtheorem{theoremalpha}{Theorem}

\theoremstyle{definition}
\newtheorem{definition}[theorem]{Definition}
\newtheorem{example}[theorem]{Example}

\newtheorem{remark}[theorem]{Remark}

\usepackage[
  style=alphabetic,
  citestyle=alphabetic,
  maxnames=100, 
  doi=false,
  isbn = false,
  giveninits = true, 
  datamodel = mrnumber, 
]{biblatex}
\DeclareNameAlias{default}{family-given}

\DeclareFieldFormat{mrnumber}{%
  MR\addcolon\space
    {
      \href{http://www.ams.org/mathscinet-getitem?mr=#1}
    {\nolinkurl{#1}}
    }
}
\usepackage{xpatch}
\xapptobibmacro{finentry}{\setunit{\par} \printfield{mrnumber}}{}{}

\title{
Cartier--Dieudonné theory of connected group schemes
}
\author{Casimir Kothari} 
\address{Department of Mathematics, Boston University, Boston, MA}
\email{ckothari@bu.edu}
\author{Joshua Mundinger}
\address{Department of Mathematics, University of California Berkeley, Berkeley, CA}
\email{mundinger@berkeley.edu}
\date{\today}

\subjclass[2020]{Primary: 14L15; 
    Secondary: 14L05 
    }

\begin{document}

\maketitle 
\begin{abstract}

    We prove that fiberwise-connected finite locally free commutative group schemes over a ring $R$ are equivalent to an explicit full subcategory of the derived category of modules over the Cartier ring of $R$.
\end{abstract}

\section{Introduction}\label{section: intro}

Given a commutative ring $R$, let $C(R)$ be the category of finite locally free commutative group schemes over $\Spec(R)$ whose fibers are connected. The goal of this paper is to describe the category $C(R)$ via two-term complexes in the derived category of Cartier modules over $R$.

Suppose that $G\in C(R)$ has order $N > 1$; then $N$ is nilpotent in $R$.
For if $A$ is the coordinate ring of $G$ with augmentation ideal $I$, then $I$ is nilpotent.
By Deligne's theorem \cite[pp. 4]{TO70}, $[N]G = 0$, so $[N]^*I = 0$.
On the other hand, $[N]^*$ acts by multiplication by $N$ on the associated graded $I/I^2$, so some power of $N$ acts by zero on $I$. Since $I$ is locally free, $N$ is nilpotent in $R$.

By the Chinese Remainder Theorem, it suffices to fix a prime $p$ and work over a $p$-nilpotent ring $R$.
To state the main theorem, let $\What$ be the $p$-typical formal Witt group over $R$ and let $\E_R = \End(\What)^{\op}$ be the ($p$-typical) Cartier ring. To present $\E_R$ explicitly, let $D_R$ be the ring generated by $W(R)$ and the symbols $F$ and $V$, subject to the relations 
$FV=p$ and $Fa = \sigma(a)F, aV = V \sigma(a), VaF = v(a)$ for all $a \in W(R)$, where $\sigma$ is the Witt vector Frobenius and $v$ is the Witt vector Verschiebung.
Then $\E_R$ is the completion of $D_R$ with respect to the filtration $\{V^n D_R\}_{n \geq 1}$.
$\E_R$ consists of infinite sums of the form 
\[ \sum_{m,n \geq 0} V^n [r_{nm}]F^m\]
where $r_{nm} \in R$, and for fixed $n$, all but finitely many of $r_{nm}$ are zero.  The ring $\E_R$ is used in $p$-typical Cartier theory, which provides an equivalence of categories between commutative formal Lie groups over $R$ and $V$-reduced $V$-flat left $\E_R$-modules via the functor $\Hom(\What, -)$.

To state our main theorem, we recall the notion of a $V$-divided Cartier module over an $\F_p$-algebra $\bR$, as defined in \cite{Zin84}. If $N$ is a left $\E_\bR$-module, then the Frobenius twist of $N$ is $N^{(p)} := \E_\bR \ten_{\sigma, \E_\bR} N$, where $\sigma$ is the Frobenius on $\E_\bR$.  There is an $\E_\bR$-linear Frobenius map $N \to N^{(p)}$, defined by $n \mapsto V \ten n$, and the $V$-divided Cartier module of $N$ is the left $\E_\bR$-module
\be
\Div(N) := \colim N^{(p^i)}
\ee
where the colimit is taken along the Frobenius maps $N^{(p^i)} \to N^{(p^{i+1})}$. The $V$-divided Cartier module functor $\Div$ is right exact and therefore admits a left derived functor $L\Div: D(\E_\bR) \to D(\E_\bR)$.

\begin{definition}
    The \emph{Cartier--Dieudonné functor} over $R$ is the functor $\uM: C(R) \to D(\E_R)$ defined by
    \[ \uM(-):= \tau^{\leq 1}R\Hom_{\fpqc}(\What,-).\]
\end{definition}
\pagebreak
\begin{theoremalpha} \label{theorem: main}
    Let $R$ be a $p$-nilpotent ring. The Cartier--Dieudonné functor over $R$
    is fully faithful.
    Its essential image consists of those $M \in D(\E_R)$ such that 
    \begin{enumerate}
        \item $M$ is derived $V$-complete; 
        \item $M/^{\mathbb{L}}VM$ is a perfect complex of $R$-modules of tor-amplitude $[0,1]$;
        \item $L\Div(\E_{R/pR} \Lotimes{\E_R} M) = 0$.
    \end{enumerate}
\end{theoremalpha}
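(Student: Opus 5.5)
The plan is to realize every $G\in C(R)$ as the kernel of an isogeny of $p$-divisible formal groups, at least locally, and to reduce the theorem to classical $p$-typical Cartier theory together with descent. By the result recalled in the introduction, $G$ is killed by $p^n$ and has connected fibers. So, Zariski (or fpqc) locally on $R$, I expect to embed $G$ into a commutative formal Lie group $\Gamma$ with $\Gamma/G=\Gamma'$ again a formal Lie group. Concretely, one can take $\Gamma$ to be a suitable formal group built from the coordinate ring, as in Oort--Tate or Raynaud-type embeddings. This gives a resolution $0\to G\to\Gamma\xrightarrow{\phi}\Gamma'\to 0$ of fppf sheaves. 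Apply $R\Hom_{\fpqc}(\What,-)$. Since $\Hom(\What,\Gamma)=M(\Gamma)$ is the Cartier module and $\Ext^i(\What,\Gamma)=0$ for $i\ge1$ (which needs proof, via the Cartier theory of $\What$ as a projective generator among formal groups), $\uM(G)$ is computed by the two-term complex $[M(\Gamma)\to M(\Gamma')]$, placed in degrees $[0,1]$. From this, conditions (i) and (ii) follow. Both modules are $V$-complete, and modulo $V$ they are the finite projective tangent modules $\mathrm{Lie}$. So $M/^{\mathbb L}VM$ is the complex $\mathrm{Lie}\,\Gamma\to\mathrm{Lie}\,\Gamma'$, which is perfect of tor-amplitude $[0,1]$.

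For condition (iii), reduce to $\bR=R/pR$. Base change of Cartier modules is compatible with base change of formal groups, so it suffices to treat $\F_p$-algebras. Over $\bR$, Zink's theory identifies $\Div(M(\Gamma))$ with the Dieudonné-type module of the formal group "made étale/perfect", the colimit along Frobenius $\Gamma\to\Gamma^{(p)}\to\cdots$. Since $\phi$ is an isogeny, $\Div$ turns $\phi$ into an isomorphism. Moreover, $M(\Gamma)$ is $V$-flat, so $\Div$ computes $L\Div$ on these terms. Hence $L\Div$ of the cone vanishes.

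For full faithfulness and the essential image I would go the other direction. Given $M$ satisfying (i)–(iii), use (ii) and the lifting of projectives to write $M$, locally, as a two-term complex $[P_0\xrightarrow{d}P_1]$ of $V$-reduced $V$-flat Cartier modules, namely $V$-completions of $\E_R\otimes_{W(R)}$ lifts of the terms of the perfect complex. Cartier theory gives formal groups $\Gamma_0,\Gamma_1$ and a homomorphism $\phi$. Condition (iii) should force $\phi$ to be an isogeny, i.e.\ finite flat with connected kernel; I would check this fiberwise at geometric points, where it becomes the classical statement over perfect fields that $\Div$ kills a module iff it is $V$-torsion-ish and of finite length. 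Then set $G=\ker\phi$. Full faithfulness reduces, by the resolutions, to the fully faithful Cartier equivalence on formal groups plus a computation of $\Hom$ in $D(\E_R)$ between two-term complexes. Homotopies correspond to maps $\Gamma'\to\Gamma_2$ that vanish appropriately, and one uses $\Ext^1$-vanishing to see that morphisms of kernels lift to morphisms of resolutions. Finally, descend: both sides satisfy fpqc/Zariski descent (derived $V$-completeness and the conditions are local), so the local constructions glue, using that $\uM(G)$ lives in degrees $[0,1]$ so that $\Hom$'s form a sheaf.

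I expect the main obstacle to be the step showing that condition (iii) characterizes isogenies: translating $L\Div=0$ over $R/p$ into finiteness and flatness of $\ker\phi$ over all of $R$. I would first try to prove that $\phi$ is an isogeny iff $\phi$ admits a $\psi$ with $\psi\phi=$ some power of Frobenius. This would let me use Zink's description of $\Div$ as inverting Frobenius, together with a nilpotent deformation argument from $R/p$ to $R$.
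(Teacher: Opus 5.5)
\paragraph{Gap 1: the two-term model in the essential-image direction.}
This step fails as written. The modules you propose, $V$-completions of $\E_R\otimes_{W(R)}\tilde Q_i$, are projective $\E_R$-modules that are already $V$-complete, i.e.\ Cartier modules of powers of $\What$. Since $\E_R/V\E_R\cong\bigoplus_{m\geq 0}R\,F^m$ has countably infinite rank:
\begin{itemize}
\item $P_i/VP_i$ is not $Q_i$;
\item the reduction of $[P_0\to P_1]$ modulo $V$ is not the given perfect complex;
\item the $P_i$ are not Cartier modules of finite-dimensional formal Lie groups, which is where the isogeny criterion applies.
\end{itemize}
More basically, ``lifting projectives'' cannot produce a map to $M$. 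Cartier modules of formal Lie groups are not projective over $\E_R$. What you need is an $\E_R$-linear map $C\to M$ from such a module inducing a surjection onto $H^0(M/^{\mathbb L}VM)$, and this forces you to match the $F$-action on $M$.

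The missing idea is to build $C$ by structural equations $\varphi(e_i)=Fe_i-\sum_{n,j}V^n[c_{i,n,j}]e_j$ simultaneously with the map to a termwise completion of a free model of $M$. You solve for the $c_{i,n,j}$ and for the degree $-1$ correction one power of $V$ at a time; this is exactly where derived $V$-completeness is used. You then induct on tor-amplitude by replacing $M$ with $\Fib(C\to M)$.

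This works globally on $\Spec R$. So does the resolution of $G$: B\'egueri's functorial embedding of $G$ into the formal completion of $\mathrm{Res}_{G^\vee/R}\mathbb{G}_m$, together with the fact that a formal Lie group modulo a finite locally free subgroup is a formal Lie group. You assume this fact without proof; the paper proves it. Consequently the Zariski gluing of objects and morphisms of $D(\E_R)$ that you invoke, which is not automatic, is never needed.

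\paragraph{Gap 2: full faithfulness.}
The terms $M(K^i)$ are not projective. So a morphism $\uM(G)\to\uM(H)$ in $D(\E_R)$ is not in general a chain map of two-term complexes up to homotopy; it can have a component in $\Ext^1_{\E_R}(M(K^1),M(L^0))$. Correspondingly, extending $G\to H$ to given resolutions $K^\bullet$, $L^\bullet$ is obstructed by a class in $\Ext^1_{\fpqc}(K^1,L^0)$, and such groups do not vanish. For example, over an $\F_p$-algebra, $\widehat{W}_2$ is a nonsplit extension of $\Ghat_a$ by $\Ghat_a$, since $p$ acts nontrivially on it.

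What you need is that Cartier theory is an isomorphism on $\Ext^1$ as well:
\[ \Ext^1_{\fpqc}(K,L)\cong\Ext^1_{\E_R}(M(K),M(L)). \]
This rests on two facts: fpqc extensions of formal Lie groups are formal Lie groups, and extensions of their Cartier modules are again such modules. Then the five lemma, applied to $\Hom(-,L)$ and $\Hom(-,M(L))$ along $\uM(G)\to M(K^0)\to M(K^1)$, gives $\Hom(G,L)\cong\Hom(\uM(G),M(L))$. The triangle for $H$ together with $\Ext^{-1}(\uM(G),M(L))=0$ then finishes.

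\paragraph{The vanishing of $\Ext^1_{\fpqc}(\What,\Gamma)$.}
Only $i=1$ matters because of $\tau^{\leq 1}$; the higher vanishing you assert is neither needed nor established. Projectivity of $\E_R=M(\What)$ helps only once you know that an fpqc extension of $\What$ by $\Gamma$ is representable by an infinite-dimensional formal group of countable type. That representability is the real content.

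\paragraph{Condition (iii).}
Your treatment via Zink's Frobenius-inversion criterion and lifting isogenies from $R/pR$ to $R$ is what the paper does (Zink's Theorems 5.25, 5.26 and Corollary 5.12), and no geometric-fiber check is needed. One correction: $L\Div(M(\Gamma))=\Div(M(\Gamma))$ does not follow from $V$-flatness. It holds because the Frobenius twist of a structural-equation resolution is again a structural equation, hence injective.
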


The Cartier--Dieudonn\'e functor has an equivalent description in terms of formal Lie groups.  Every $G \in C(R)$ admits a functorial two-term resolution $0 \to G \to K \to L \to 0$ where $K$ and $L$ are commutative formal Lie groups over $R$ (Corollary \ref{corollary: begueri for connected}), and there is a natural isomorphism
\be
\uM(G) \cong \Fib(\Hom(\What,K) \to \Hom(\What,L))
\ee
in $D(\E_R)$ (Remark \ref{remark: alternate construction}).  From this perspective, the first two conditions of Theorem \ref{theorem: main} describe those $M \in D(\E_R)$ which are equivalent to a two-term complex of Cartier modules of formal Lie groups, while the third condition describes those two-term complexes corresponding to \textit{isogenies} of formal Lie groups.  Our work consistently makes use of the relationship between connected finite locally free group schemes and formal Lie groups. 

\begin{remark}
    In \cite{Oor74}, Oort considered classifying $C(R)$ when $R$ is a complete local noetherian ring of residue characteristic $p$. Oort also used the idea of resolving $G \in C(R)$ by commutative formal Lie groups $0\to G \to K \to L \to 0$, then applying Cartier theory to $K$ and $L$.  However, instead of considering the two-term complex $[\Hom(\What,K) \to \Hom(\What,L)]$ in $D(\E_R)$, Oort considered only the cokernel $M^{\Oort}(G) = \coker(\Hom(\What,K) \to \Hom(\What,L))$. 
    The functor $G \mapsto M^{\Oort}(G)$ is not fully faithful.
    For example, when $R$ is an $\Fp$-algebra and $G = \alpha_p$, 
    the natural map 
    \[ \End_R(\alpha_p) \to \End_{\E_R}(M^{\Oort}(\alpha_p))\]
    is identified with the $p$th power map $R \to R^p$, which is not an isomorphism if $R$ is not reduced.
    See Example \ref{ex: oort-functor} for more details.
\end{remark}

\begin{remark}
For $G \in C(R)$, there is a canonical isomorphism of $\uM(G)/^{\bb{L}}V\uM(G)$ with the Lie complex $\ell_G^\vee$ of $G$. Thus, we may view $\uM(G)$ as a $V$-adic lift of the Lie complex of $G$.  Moreover, $\uM(G)$ is an iterated extension of the Frobenius restricted complexes $\ell_G^\vee, \ell_G^\vee|_{\sigma}, \ell_G^\vee|_{\sigma^2}, \dots$ (see Proposition \ref{proposition: extension of Lie}).
\end{remark}

The structure of this article is as follows. In §\ref{section: formal Lie}, we explain background in formal Lie groups and Cartier theory. We treat formal Lie groups of countable type and the corresponding Cartier theory, which may be of independent interest. For our applications to Dieudonn\'e theory, we only need Proposition \ref{prop: ext1 from What}, which states that $\Ext^1_{\fpqc}(\What, K) = 0$ for any commutative formal Lie group $K$ over $R$. We then study isogenies of formal Lie groups and describe them Cartier-theoretically in the derived category $D(\E_R)$. In §\ref{section: connected theorem}, we study the category $C(R)$ and prove Theorem \ref{theorem: main}. §4 discusses further properties of the Cartier--Dieudonné functor such as the $V$-adic filtration, compatibility with base change, and exactness. 

\subsection*{Conventions}

Throughout this article, ``countable'' means finite or countably infinite. We work with the fpqc topology by default, although our results also work in the ``fidèlement plat, pr\'esentation dénombrable'' (fppd) topology, and our results for finite-dimensional formal Lie groups are also valid in the fppf topology.  

Unless otherwise stated, whenever $R$ is a $p$-nilpotent ring, we set $\ol{R} = R/pR$, and use an overline to denote reduction from $R$ to $\ol{R}$.

We use cohomological conventions for the derived category of modules over a ring. The shift functor $[1]$ satisfies $H^i(M[1]) = H^{i+1}(M)$.

\subsection*{Acknowledgments}
We thank Vladimir Drinfeld for explaining to us the construction of the Cartier--Dieudonn\'e functor via formal Lie groups and asking about its full faithfulness and essential image.  
Thanks to 
Ben Antieau,
Matt Emerton,
Keerthi Madapusi,
Akhil Mathew,
Shubhodip Mondal,
and Martin Olsson 
for helpful comments and conversations.

In the course of this work, J.M.\ was supported by the National Science Foundation under Award No. 2503534. Any opinions, findings, and conclusions or recommendations expressed in this material are those of the authors and do not necessarily reflect the views of the National Science Foundation.

\subsection*{Computational resource disclosure}

We originally proved Theorem \ref{theorem: quotients of FLG} without computer assistance by reduction to characteristic $p$. ChatGPT 5.6 Sol suggested a more direct proof via Lemma \ref{lem: quotient quasiregular immersion}, which we used in this article. The statement of Proposition \ref{proposition: truncation RHom} and the argument for the reduction to the case $n = 1$ were suggested by ChatGPT 5.6 Sol. We also used ChatGPT for general research assistance (e.g. literature searches and exploring proof strategies), and used ChatGPT and Claude for proofreading the manuscript. We have edited, reviewed, and verified all the content generated by these tools to ensure its accuracy.

\section{Formal Lie groups and Cartier theory} \label{section: formal Lie}

Cartier theory describes the category of formal Lie groups in terms of maps from the Witt vector formal group $\What$. However, $\What$ is not a formal Lie group in the usual sense since it is infinite-dimensional.
Thus, it is useful to have a supple theory of infinite-dimensional formal Lie groups.
Zink's approach to Cartier theory \cite{Zin84} sidesteps such geometric questions by studying formally smooth functors on nilpotent algebras. However, for questions of extensions and descent, we will need to consider representability.

In this section, we introduce formal Lie groups of countable type and state their classification in terms of Cartier theory. The underlying set-valued functor of a formal Lie group of countable type over $R$ is of the form $A \mapsto \Nil(A) \otimes_R M$ where $M$ is a countably generated projective $R$-module. This is a ``Goldilocks zone'' which allows for infinite-type objects like $\What$ but avoids certain difficulties with formally smooth ind-schemes of uncountable type.

\subsection{Based formal polydisks of countable type}

Let $R$ be a commutative ring. 
Recall that an \emph{ind-scheme} over $\Spec R$ is a functor $X: \CAlg_R \to \Set$ which is a filtered colimit of schemes along closed immersions.
Since filtered colimits in $\Set$ commute with finite limits, an ind-scheme is an fpqc sheaf. An ind-scheme is \emph{ind-affine} if it is the colimit of affine schemes along closed immersions. An ind-affine ind-scheme is \emph{ind-finite} over $\Spec R$ if those affine schemes may be taken to be finite over $R$.

Recall that a \emph{nilpotent $R$-algebra} is a non-unital commutative $R$-algebra $N$ where there exists $m > 0$ such that $N^m =0 $. If $N$ is a nilpotent $R$-algebra, then we can form the augmented nilpotent $R$-algebra $R \oplus N$, which is a unital $R$-algebra.

\begin{proposition}\label{prop: ind-infinitesimal conditions}
    Let $X \to \Spec R$ be an ind-finite ind-scheme equipped with a section $e: \Spec R \to X$. The following are equivalent:
    \begin{enumerate}
        \item for all $x \in X(A)$ there exists a finitely generated nilpotent ideal $I \subseteq A$ such that $x$ maps to $e \in X(A/I)$;
        \item for all $x \in X(A)$ there exists a finite nilpotent $R$-algebra $N$ such that $x$ factors through $X(R \oplus N)$ over $e$;
        \item $X$ is a filtered colimit $\colim_\alpha \Spec(R \oplus N_\alpha)$ where each $N_\alpha$ is a finite nilpotent $R$-algebra.
    \end{enumerate}
\end{proposition}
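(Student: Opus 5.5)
I will prove the cycle (iii) $\Rightarrow$ (ii) $\Rightarrow$ (i) $\Rightarrow$ (iii). Throughout, write $X = \colim_\beta X_\beta$ with $X_\beta = \Spec B_\beta$ affine and finite over $R$, transition maps closed immersions. Enlarging the indexing if necessary, I may assume $e$ factors through every $X_\beta$. This is harmless because $\Spec R$ is quasi-compact, so $e$ factors through some $X_{\beta_0}$, and I can restrict to indices $\beta \geq \beta_0$. The augmentation $e^*: B_\beta \to R$ then gives a splitting $B_\beta = R \oplus J_\beta$ as $R$-modules, with $J_\beta$ the augmentation ideal.

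\textbf{(iii) $\Rightarrow$ (ii).} A point $x \in X(A)$ is a map $\Spec A \to X$. The functor $X$ is a filtered colimit of set-valued functors, so $x$ lies in the image of some $\Spec(R\oplus N_\alpha)(A)$. Hence $x$ factors through $\Spec(R \oplus N_\alpha) \to X$. I must also check that $\Spec(R\oplus N_\alpha)\to X$ sends the augmentation point to $e$. For this I use that the colimit presentation is one of pointed objects; the statement implicitly means this, and I will make it explicit.

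\textbf{(ii) $\Rightarrow$ (i).} Suppose $x$ is the composite of a point $y \in \Spec(R\oplus N)(A)$, i.e.\ a map $\phi: R\oplus N \to A$, with the map $\Spec(R\oplus N) \to X$. Take $I = A\cdot\phi(N)$. This ideal is finitely generated because $N$ is a finite $R$-module, and it is nilpotent because $N^m=0$. Modulo $I$, the map $\phi$ factors through the augmentation $R\oplus N\to R$, so $x$ maps to $e$ in $X(A/I)$.

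\textbf{(i) $\Rightarrow$ (iii).} This is the main step. For each $\beta$ and each $n\ge 1$, set $N_{\beta,n} = J_\beta/J_\beta^n$.
\begin{itemize}
\item $N_{\beta,n}$ is a finite nilpotent $R$-algebra, since $J_\beta$ is finite as an $R$-module ($B_\beta$ is finite and $J_\beta$ is a direct summand).
\item $\Spec(R\oplus N_{\beta,n}) = \Spec(B_\beta/J_\beta^n)$ is the $n$-th infinitesimal neighborhood of $e$ in $X_\beta$.
\item The pairs $(\beta,n)$ form a filtered system, and the transition maps are closed immersions.
\end{itemize}
I claim the natural map $\colim_{\beta,n}\Spec(B_\beta/J_\beta^n) \to X$ is an isomorphism of functors.

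Injectivity holds because each term injects into $X$ (all maps are closed immersions, hence monomorphisms).

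For surjectivity, let $x\in X(A)$. Then $x$ factors through some $X_\beta$ as a map $f: B_\beta\to A$. By (i) there is a finitely generated nilpotent ideal $I$ with $x \equiv e$ mod $I$. The subtle point is that this congruence holds in $X(A/I)$, not a priori in $X_\beta(A/I)$. However, $X_\beta \to X$ is a monomorphism, so the two points of $X_\beta(A/I)$ (namely $f$ mod $I$ and $e$) agree there. Hence $f(J_\beta)\subseteq I$. Since $I^n=0$ for some $n$, we get $f(J_\beta^n)=0$, and $x$ factors through $\Spec(B_\beta/J_\beta^n)$.

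The hypothesis that $I$ is finitely generated is not really needed here, only nilpotence. The obstacle I expect is bookkeeping: ensuring $e$ lies in each $X_\beta$, and justifying the use of monomorphisms when comparing points in $X_\beta$ versus $X$. Both are handled above.
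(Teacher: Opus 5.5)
Your proof is correct, but it is organized differently from the paper's. The paper runs the cycle (i) $\Rightarrow$ (ii) $\Rightarrow$ (iii) $\Rightarrow$ (i).

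\begin{itemize}
\item In (i) $\Rightarrow$ (ii) it factors $f\colon B_\alpha \to A$ through $R \oplus f(J_\alpha) \subseteq R \oplus I$.
\item The main step is (ii) $\Rightarrow$ (iii). There it applies (ii) to the tautological point $\Spec B_\alpha \to X$. A transition map $B_\beta \to B_\alpha$ then factors through $R \oplus N$, and its surjectivity shows that each $B_\alpha$ is already of the form $R \oplus N_\alpha$ with $N_\alpha$ nilpotent.
\end{itemize}

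So the paper proves a sharper statement: the \emph{given} presentation (restricted to terms containing $e$) already consists of infinitesimal thickenings, on the same index set. This is the form invoked later in the proof of Theorem~\ref{theorem: formal alepho-Lie varieties}.

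You instead prove (i) $\Rightarrow$ (iii) directly, passing to the truncations $B_\beta/J_\beta^n$ and checking bijectivity point by point. Your (ii) $\Rightarrow$ (i) is the paper's (iii) $\Rightarrow$ (i) with the same ideal. Your route never has to identify the terms $B_\beta$ themselves, at the cost of a re-indexed presentation. This is harmless, since it stays countable when the original one is.

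In fact the index $n$ is redundant. Apply (i) to the tautological point $\Spec B_\beta \to X$ in $X(B_\beta)$, so that $f = \mathrm{id}$. Your own monomorphism argument then gives $J_\beta \subseteq I$ with $I$ nilpotent, so $J_\beta^n = 0$ for $n \gg 0$. This recovers the paper's sharper conclusion in one line.

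You flag two points of bookkeeping:
\begin{itemize}
\item $X_\beta \to X$ is a monomorphism, so $x \bmod I$ and $e$ already agree in $X_\beta(A/I)$;
\item (iii) must be read as a pointed colimit.
\end{itemize}
The paper uses both silently; its (iii) $\Rightarrow$ (i) relies on the pointed reading. Making them explicit is a genuine improvement.
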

\begin{proof}
    Write $X = \colim_\alpha \Spec B_\alpha$ where each $B_\alpha$ is finitely generated as an $R$-module; then $X(A) = \colim_\alpha \Hom(B_\alpha, A)$ for all $A \in \CAlg_R$.
    Since our colimit is filtered, we can assume that $e: \Spec R \to X$ factors through every $\Spec B_\alpha \to X$; hence each $B_\alpha$ has an augmentation $\epsilon_\alpha: B_\alpha \to R$.

    Suppose i. If $x \in X(A)$, then there exists $\alpha$ such that $x$ factors through $\Spec B_\alpha(A)$, and so that the image of $x$ in $\Spec B_\alpha(A/I)$ is $e$ for some finite nilpotent ideal $I$.
    Thus we have a diagram 
    \[
\begin{tikzcd}
	{B_\alpha} & A \\
	R & {A/I}
	\arrow["f", from=1-1, to=1-2]
	\arrow["{\epsilon_\alpha}"', from=1-1, to=2-1]
	\arrow[from=1-2, to=2-2]
	\arrow[from=2-1, to=2-2]
\end{tikzcd}
.
    \]
    Then $f$ factors through $(\epsilon_\alpha, f - \epsilon_\alpha): B_\alpha \to R \oplus I$.
    If $N$ is the image of the augmentation ideal of $B_\alpha$ in $I$, then $N$ is nilpotent, finite over $R$, and $f$ factors through $R \oplus N$.
    Thus i. $\implies$ ii.

    Suppose ii. Given $\alpha$, the structure map $\Spec B_\alpha \to X$ factors through some $\Spec(R \oplus N)$. 
    Hence there is $\beta\geq \alpha$ such that the map $B_\beta \to B_\alpha$ factors through $R \oplus N$ compatibly with the augmentations.
    Since $B_\beta \to B_\alpha$ is surjective, this implies $B_\alpha = R \oplus N_\alpha$ where $N_\alpha$ is the image of $N$. Therefore $N_\alpha$ is a finite nilpotent ideal. Thus ii. $\implies$ iii.

    Suppose iii. Given $x \in X(A)$, there exists $\alpha$ such that $x$ factors through $\Spec(R\oplus N_\alpha) \to X$.
    Then set $I = AN_\alpha$. Thus iii. $\implies$ i.
\end{proof}

\begin{definition}
    A pointed ind-scheme $(X,e)$ is \emph{ind-infinitesimal} if it is ind-finite and satisfies the equivalent conditions of Proposition \ref{prop: ind-infinitesimal conditions}. 
\end{definition}

\begin{definition}
    Let $R$ be a commutative ring. 
    A \emph{based formal polydisk of countable type} (or \emph{based $\alepho$-polydisk}) over $R$ is a pointed ind-scheme $(X,e) \to \Spec R$
    such that 
    \begin{enumerate}
        \item $X \to \Spec R$ is an ind-finite $\alepho$-ind-scheme, that is, a countable colimit of schemes along closed immersions;
        \item $(X,e) \to \Spec R$ is ind-infinitesimal;
        \item $X \to \Spec R$ is formally smooth.
    \end{enumerate}
    Let $\aPolyd(R)$ be the groupoid of based $\alepho$-polydisks over $R$ and their isomorphisms.
\end{definition}

\begin{example}
    Let $I$ be a countable set.
    Consider the functor 
    \[\Ahat^I(A) = \{ f \in A^I \mid f(i) \text{ is nilpotent, } f(i) = 0 \text{ for all but finitely many }i\};\]
    The functor is formally smooth.
    Let $e = 0 \in \Ahat^I(R)$.
    Then $\Ahat^I$ is the filtered colimit 
    \[ \colim_{J \subset I \text{finite}} \colim_{n \in \mathbb{N}} \Spec(R[x_j \mid j \in J]/(x_j \mid j \in J)^n),\]
    and thus is an ind-infinitesimal ind-scheme.
    The system of finite subsets of $I$ is countable, so $\Ahat^I$ is $\alepho$. Thus $(\Ahat^I,0)$ is a based $\alepho$-polydisk.
\end{example}

Note that the example $\Ahat^I$ is of the form $A \mapsto \Nil(A) \otimes_R M$ where $M = \bigoplus_I R$ is a free $R$-module.
\begin{definition}\label{defn: ahat}
    For an $R$-module $M$, define $\Ahat_M: \CAlg_R \to \Set$ by 
    \[ \Ahat_M(A) = \Nil(A) \otimes_R M.\]
\end{definition}

Theorem \ref{theorem: formal alepho-Lie varieties} below shows that every based $\alepho$-polydisk is of the form $\Ahat_M$ where $M$ is a countably generated projective module.
To prove this, we use the characterization of countably generated projective modules as \emph{flat Mittag-Leffler modules}, which is more suited to the geometry of ind-schemes.
The notion of flat Mittag-Leffler module was introduced by Raynaud and Gruson in their study of fpqc descent for projective modules \cite{RG71}.
Recall Lazard's theorem that an $R$-module is flat if and only if it is a filtered colimit of finite free modules.

\begin{definition}
    An $R$-module $M$ is \emph{flat Mittag-Leffler}
    if when $M = \colim_\alpha F_\alpha$ for finite free $R$-modules $F_\alpha$, the pro-system of dual modules $F_\alpha^*$ is Mittag-Leffler:
    for all $\alpha$ there exists $\beta \geq \alpha$ such that for $\gamma \geq \beta$,
    \[ \im(F_\gamma^* \to F_\alpha^*) = \im(F_\beta^* \to F_\alpha^*).\]
\end{definition}
The Mittag-Leffler condition on $F_\alpha^*$ means that the pro-system $F_\alpha^*$ is equivalent to a pro-system of $R$-modules with surjective transition maps (take $M_\alpha$ to be the stable image of $F_\beta^* \to F_\alpha^*$).

\begin{theorem}[Raynaud--Gruson, \cite{stacks-project}, \href{https://stacks.math.columbia.edu/tag/059X}{Tag 059X}]\label{theorem: countably generated projective}
    For a countably generated $R$-module $M$, $M$ is flat Mittag-Leffler if and only if $M$ is projective.
\end{theorem}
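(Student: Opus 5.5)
The plan is to prove both directions directly, with countable generation used only in the direction ``flat Mittag-Leffler $\Rightarrow$ projective''.

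\emph{Projective $\Rightarrow$ flat Mittag-Leffler.} Let $M$ be projective. Then $M$ is a direct summand of a free module $R^{(I)}$, and countable generation lets us take $I$ countable. A free module is the filtered colimit of its finite free submodules $R^{(J)}$, $J\subseteq I$ finite. The dual pro-system $(R^J)$ has surjective transition maps, so it is Mittag-Leffler. The Mittag-Leffler condition on duals does not depend on the choice of presentation as a colimit of finite free modules, since any two such presentations induce pro-isomorphic dual systems. It also passes to direct summands: the retraction $M\to R^{(I)}\to M$ exhibits the dual pro-system of $M$ as a retract of a Mittag-Leffler pro-system, and a retract of a Mittag-Leffler pro-system is Mittag-Leffler. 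Flatness of $M$ is clear.

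\emph{Flat Mittag-Leffler $\Rightarrow$ projective.} Write $M=\colim_\alpha F_\alpha$ by Lazard. Because $M$ is countably generated, we can extract a cofinal countable chain $F_1\to F_2\to\cdots$ with $M=\colim_n F_n$, and the dual system $(F_n^*)$ is still Mittag-Leffler. I expect this extraction to be the main technical point. The idea is to pick generators $m_1,m_2,\dots$ and inductively choose indices so that each $m_k$ lies in the image of $F_{n_k}$, and so that every relation already visible in $F_{n_k}$ among the elements it hits becomes visible in $F_{n_{k+1}}$. Finite presentation of the $F_\alpha$ makes this induction work.

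We then want to show that $M$ is projective, i.e.\ that $\Hom(M,-)$ is exact. We have
\[
\Hom(M,N)=\lim_n \Hom(F_n,N)=\lim_n F_n^*\otimes N ,
\]
with $F_n^*\otimes N$ Mittag-Leffler for every $N$. Given a surjection $N\to N''$ with kernel $N'$, each level $0\to F_n^*\otimes N'\to F_n^*\otimes N\to F_n^*\otimes N''\to0$ is exact, since $F_n$ is finite free. The system $(F_n^*\otimes N')$ is Mittag-Leffler, so $\lim^1_n (F_n^*\otimes N')=0$ for a countable tower. Taking limits therefore gives surjectivity of $\Hom(M,N)\to\Hom(M,N'')$, so $M$ is projective.

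Alternatively, one can cite Raynaud--Gruson's dévissage, which shows that a countably generated flat Mittag-Leffler module is a countable direct sum of finitely generated projectives. In practice we invoke \cite{stacks-project}, Tag 059X, and only sketch the argument above.
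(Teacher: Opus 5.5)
\textbf{Gap: extracting the countable chain.} The paper gives no proof here; it only cites Raynaud--Gruson via the Stacks Project. Your outline follows the standard argument: Lazard, then reduction to a countable tower, then vanishing of $\lim^1$ for a Mittag-Leffler tower to get exactness of $\Hom(M,-)$. Your proof of projective $\Rightarrow$ flat Mittag-Leffler and your final $\lim^1$ step are fine. (The tower $(F_n^*\otimes N)$ is Mittag-Leffler because $-\otimes N$ is right exact and so carries images to images.)

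The problem is the countable chain. You say countable generation plus finite presentation of the $F_\alpha$ lets you choose indices so that all of $\ker(F_{n_k}\to M)$ already dies in $F_{n_{k+1}}$. Nothing you have said guarantees this. The kernel $\ker(F_{n_k}\to M)$ need not be finitely, or even countably, generated. Each of its elements dies at some later stage, but there need not be a common stage, and you cannot kill them one at a time in countably many steps either.

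In fact the claim is false without more hypotheses. Take $R=\prod_{s\in S}k$ with $k$ a field and $S$ uncountable, and $\mathfrak a=\bigoplus_{s\in S}k$. The cyclic module $R/\mathfrak a$ is flat, since $R$ is von Neumann regular. A colimit of a countable chain of finite free modules is countably presented, and then Schanuel's lemma would make $\mathfrak a$ countably generated, which it is not. So $R/\mathfrak a$ is not such a colimit. Hence the Mittag-Leffler hypothesis must already be used at this step, not only in the $\lim^1$ step.

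\textbf{The missing idea.} Write $f_{\alpha\beta}\colon F_\alpha\to F_\beta$ for the transition maps. Given $\alpha$, choose $\beta\ge\alpha$ such that $\im(F_\beta^*\to F_\alpha^*)$ is the stable image. Then $\ker(F_\alpha\to F_\beta)=\ker(F_\alpha\to M)$, by the following argument.
\begin{itemize}
\item[] Let $x\in F_\alpha$ map to $0$ in $M$. Then $x$ maps to $0$ in some $F_\gamma$ with $\gamma\ge\beta$.
\item[] For every $\psi\in F_\beta^*$, the functional $\psi\circ f_{\alpha\beta}$ lies in the stable image, so $\psi\circ f_{\alpha\beta}=\psi'\circ f_{\alpha\gamma}$ for some $\psi'\in F_\gamma^*$.
\item[] Hence $\psi(f_{\alpha\beta}(x))=0$ for all $\psi\in F_\beta^*$. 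Since $F_\beta$ is free, this forces $f_{\alpha\beta}(x)=0$.
\end{itemize}
Now choose $n_{k+1}$ to dominate three indices: $n_k$, the index $\beta(n_k)$ just constructed, and an index whose image in $M$contains the generator $m_{k+1}$. The resulting chain maps surjectively onto $M$, and injectively because kernels die one step later. Its dual system is pro-isomorphic to the original one, hence Mittag-Leffler. With this step inserted, the rest of your argument goes through.
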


The following proposition is useful for identifying flat Mittag-Leffler modules.

\begin{proposition}[\cite{BD91_Hitchin}, Proposition 7.12.6; \cite{AM25}, Theorem 2.3.1]
    \label{prop: BD criterion for FML}
    Consider functors $R\modc \to R\modc$ of the form $M \otimes_R -$ for an $R$-module $M$ and of the form $\colim_\alpha \Hom(N_\alpha,-)$ for a filtered system of $R$-modules $N_\alpha$.
    \begin{enumerate}
        \item A functor of the form $M \otimes_R -$ is isomorphic to one of the form $\colim_\alpha \Hom(N_\alpha, -)$ if and only if $M$ is flat.
        \item A functor of the form $M \otimes_R -$ is isomorphic to one of the form $\colim_\alpha \Hom(N_\alpha, -)$ with surjective transition maps $N_\beta \to N_\alpha$ if and only if $M$ is flat Mittag-Leffler.
        \item A functor of the form $\colim_\alpha \Hom(N_\alpha ,-)$ with surjective transition maps $N_\beta \to N_\alpha$ is isomorphic to one of the form $M \otimes_R -$ if and only if the functor is exact and the modules $N_\alpha$ are finitely generated.
    \end{enumerate}
\end{proposition}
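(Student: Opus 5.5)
Since this result is cited from \cite{BD91_Hitchin} and \cite{AM25}, I would reprove it directly, treating the three parts in order and reusing the earlier ones.

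For (i), suppose first that $M$ is flat. By Lazard's theorem $M = \colim_\alpha F_\alpha$ with $F_\alpha$ finite free. Tensor product commutes with colimits, so $M \otimes_R - \cong \colim_\alpha F_\alpha \otimes_R -$. For a finite free module we have $F_\alpha \otimes_R - \cong \Hom(F_\alpha^*, -)$. The system $N_\alpha := F_\alpha^*$ is then a (contravariant) filtered system, and we obtain $M\otimes_R - \cong \colim_\alpha \Hom(N_\alpha,-)$. Conversely, any functor $\colim_\alpha \Hom(N_\alpha,-)$ is left exact, since each $\Hom(N_\alpha,-)$ is left exact and filtered colimits are exact. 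Hence $M \otimes_R -$ preserves injections, which says exactly that $M$ is flat.

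For (ii), if $M$ is flat Mittag-Leffler, I would replace the pro-system $F_\alpha^*$ by the pro-system of stable images $N_\alpha := \im(F_\beta^* \to F_\alpha^*)$. These have surjective transition maps and are pro-isomorphic to $(F_\alpha^*)$. Corepresented functors $\colim \Hom(N_\alpha,-)$ depend only on the pro-object, by Yoneda for pro-objects, so the functor is unchanged. For the converse, suppose $M\otimes_R - \cong \colim \Hom(N_\alpha,-)$ with surjective transitions. Then $M$ is flat by (i), and I would take the presentation $M=\colim F_\gamma$. The Yoneda lemma for pro-objects gives an isomorphism of pro-objects $(F_\gamma^*)\cong (N_\alpha)$, since both corepresent the same functor. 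A pro-object isomorphic to one with surjective transition maps is Mittag-Leffler: the images $\im(F_\gamma^*\to F_\alpha^*)$ are squeezed between the images of the $N$'s through the interleaving maps, hence stabilize. I expect this to be the step with the most bookkeeping.

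For (iii), suppose $\colim \Hom(N_\alpha,-)$ has surjective transitions, is exact, and each $N_\alpha$ is finitely generated. It is then an exact functor commuting with filtered colimits, because $N_\alpha$ finitely generated with surjective transitions makes $\Hom(N_\alpha,-)$ commute with filtered colimits of injective systems. I expect this to be the main obstacle; I would reduce via finite presentation up to pro-isomorphism, or argue with direct sums. Put $M := F(R)$. There is a natural map $M \otimes_R X \to F(X)$, which is an isomorphism for $X$ free by additivity and compatibility with direct sums, and then for all $X$ by right exactness applied to a free presentation. Then (ii) shows $M$ is flat Mittag-Leffler. For the forward direction of (iii): $M\otimes_R-$ is exact since $M$ is flat. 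Finite generation of the $N_\alpha$ follows because the stable images of the $F_\gamma^*$ are quotients of finite free modules, and pro-isomorphic surjective systems can be compared termwise after reindexing.
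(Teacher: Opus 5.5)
The paper gives no argument for this proposition; it only cites \cite{BD91_Hitchin} and \cite{AM25}. So there is no internal proof to compare against, and citing buys the authors brevity. Your proposal is a correct self-contained proof along standard lines:
\begin{itemize}
\item for (i), Lazard's theorem together with $F\otimes_R X\cong\Hom(F^*,X)$ for finite free $F$;
\item for (ii), the Yoneda lemma for pro-objects, plus invariance of the Mittag-Leffler condition under pro-isomorphism;
\item for (iii), an Eilenberg--Watts argument.
\end{itemize}
Your squeeze argument in (ii) goes through. Factoring transition maps of $(F_\gamma^*)$ through some $N_\alpha$ bounds the images from above. Naturality of the pro-morphism $(N_\alpha)\to(F_\gamma^*)$, together with surjectivity of the transitions of $N$, bounds them from below.

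Two steps should be tightened.

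\emph{Converse of (iii).} The observation about injective systems does not show that the functor commutes with all filtered colimits, and you do not need that. What you actually use is that each $\Hom(N_\alpha,-)$ commutes with arbitrary direct sums, because $N_\alpha$ is finitely generated. Hence $\colim_\alpha\Hom(N_\alpha,-)$ does too, and your free-presentation argument then applies. Surjectivity of the transition maps plays no role in this direction.

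\emph{Forward direction of (iii).} The phrase ``compared termwise after reindexing'' should be replaced by the precise statement. If $\phi$ and $\psi$ are inverse pro-isomorphisms between $(N_\alpha)$ and $(F_\gamma^*)$, then for each $\alpha$ the composite $N_{\alpha'}\to F_\gamma^*\to N_\alpha$ agrees with a transition map, which is surjective. So $N_\alpha$ is a quotient of the finite free module $F_\gamma^*$.

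A shorter route avoids Lazard altogether. Since the transitions are surjective, $\Hom(N_\alpha,-)\hookrightarrow M\otimes_R-$ is injective. By Yoneda it is given by some element $\sum_i m_i\otimes n_i\in M\otimes_R N_\alpha$. Evaluating on the projection $N_\alpha\to N_\alpha/\langle n_i\rangle$ sends it to zero, so injectivity forces $N_\alpha=\langle n_i\rangle$.
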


\begin{remark}
    There is a more general notion of a Mittag-Leffler module without flatness, but we only require the notion of flat Mittag-Leffler module.
\end{remark}

Recall that if $X: \CAlg_R \to \Set$ is a functor and $x \in X(A)$ is a point of $X$, then the \emph{tangent space} to $x$ is the functor $T_x X: A\modc \to \Set$ defined by $T_xX(M) = X(A \oplus M) \times_{X(A)} \{x\}$.

\begin{lemma}[cf. \cite{BD91_Hitchin}, 7.12.12]
    \label{lemma: tangent space is ML}
    Suppose that $X\to \Spec R$ is a formally smooth ind-scheme of ind-finite type and $x \in X(A)$.
    Then $T_xX(A)$ is a flat Mittag-Leffler $A$-module and $T_xX(-) = T_xX(A) \otimes_A -$.
\end{lemma}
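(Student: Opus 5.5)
The plan is to use Proposition \ref{prop: BD criterion for FML}. Write $X = \colim_\alpha \Spec B_\alpha$ as a filtered colimit along closed immersions, with each $B_\alpha$ of finite type over $R$. Then $x$ factors through some $\Spec B_{\alpha_0}$, and for $\alpha \geq \alpha_0$ we get $A$-algebra maps $x_\alpha: B_\alpha \to A$ compatible with the surjections $B_\beta \to B_\alpha$. For any $A$-module $M$, a point of $X(A \oplus M)$ lifting $x$ factors through some $\Spec B_\alpha$, because $A \oplus M \to A$ has square-zero kernel and the colimit is filtered. Hence
\[ T_xX(M) = \colim_\alpha \Der_R(B_\alpha, M) = \colim_\alpha \Hom_A(N_\alpha, M), \qquad N_\alpha := \Omega_{B_\alpha/R} \otimes_{B_\alpha, x_\alpha} A. \]
Each $N_\alpha$ is finitely generated over $A$, since $B_\alpha$ is of finite type. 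The transition maps $N_\beta \to N_\alpha$ are surjective, since $B_\beta \to B_\alpha$ is surjective and hence so is $\Omega_{B_\beta} \to \Omega_{B_\alpha}$. So $T_xX$ has the form $\colim_\alpha \Hom_A(N_\alpha, -)$ with surjective transitions and finitely generated $N_\alpha$.

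By part iii of Proposition \ref{prop: BD criterion for FML}, it remains to show that this functor on $A$-modules is exact. Once that holds, $T_xX \cong M_0 \otimes_A -$ for some $A$-module $M_0$. Part ii then shows $M_0$ is flat Mittag-Leffler, and evaluating at $A$ gives $M_0 = T_xX(A)$. This yields both claims.

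\textbf{Exactness.} The functor is a filtered colimit of left exact functors, so it is left exact. The main point is to show that it preserves surjections $M \to M'$, and this is where formal smoothness enters. Given $M \twoheadrightarrow M'$ with kernel $K$, consider the map of $R$-algebras $A \oplus M \to A \oplus M'$. It is surjective with square-zero kernel $0 \oplus K$. A tangent vector in $T_xX(M')$ is a point of $X(A\oplus M')$, and formal smoothness of $X$ lifts it to a point $y \in X(A \oplus M)$. However, $y$ need not lie over $x$: its image in $X(A)$ is $x$ only after further reduction. To fix this, compose with $A \oplus M \to A \oplus M'$ and then $A \oplus M' \to A$. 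The image of $y$ in $X(A)$ equals the image of the original point, namely $x$, because the projection $A \oplus M \to A$ factors through $A \oplus M'$. So $y \in T_xX(M)$ maps to the given vector, and surjectivity follows.

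Additivity of the functor is automatic from the $\Hom$-colimit description, so no additional structure needs to be checked. The main (though mild) obstacle is bookkeeping: ensuring that the lifting from formal smoothness is taken in the category of $R$-algebras, where $A \oplus M \to A \oplus M'$ is a nilpotent thickening, and that the identification $T_xX(M) = \colim_\alpha \Hom_A(N_\alpha, M)$ is natural in $M$. The finite-type hypothesis is what makes the $N_\alpha$ finitely generated, as part iii requires.
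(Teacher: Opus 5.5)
Your proposal is correct and follows essentially the same route as the paper: you identify $T_xX$ with $\colim_\alpha \Hom_A(N_\alpha,-)$ for finitely generated $N_\alpha$ with surjective transition maps, use formal smoothness to get exactness, and conclude via Proposition \ref{prop: BD criterion for FML}, with your exactness argument spelling out what the paper states in one line. The one thing to adjust is that an ind-scheme of ind-finite type need not be ind-affine, so you should take $N_\alpha = x^*\Omega^1_{X_\alpha/R}$ for finite-type schemes $X_\alpha$, as the paper does, rather than $\Omega_{B_\alpha/R}\otimes_{B_\alpha} A$; the rest of your argument goes through unchanged.
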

\begin{proof}
    Write $X = \colim_\alpha X_\alpha$ for $X_\alpha \to \Spec R$ schemes of finite type.
    Then $T_x X(-) = \colim_\alpha \Hom(x^* \Omega^1_{X_\alpha/R}, -)$. The $A$-modules $x^*\Omega^1_{X_\alpha/R}$ are finitely generated and the transition maps are surjective. Since $X$ is formally smooth, the functor $T_xX$ is exact. 
    By Proposition \ref{prop: BD criterion for FML},
    $T_xX$ is given by tensoring with a flat Mittag-Leffler $A$-module, which must be $T_xX(A)$.
\end{proof}

We can now prove a classification of based $\alepho$-polydisks in terms of countably generated projective modules. 

\begin{theorem}\label{theorem: formal alepho-Lie varieties}
    \begin{enumerate}
        \item If $M$ is an $R$-module, then $(\Ahat_M,0)$ is a based $\alepho$-polydisk if and only if $M$ is countably generated projective.
        \item (\cite[Proposition 7.12.18]{BD91_Hitchin}) Suppose $(X,e) \to \Spec R$ is a based $\alepho$-polydisk. Then 
        \[(X,e) \cong (\Ahat_M,0)\]
        where $M = T_eX(R)$.
    \end{enumerate}
\end{theorem}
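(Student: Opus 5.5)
For part (i), I will first treat the ``if'' direction. Suppose $M$ is countably generated projective. Then $M$ is a direct summand of $\bigoplus_{\mathbb N} R$, say $M \oplus M' = R^{(\mathbb N)}$. This gives a retraction of functors $\Ahat_M \to \Ahat^{\mathbb N} \to \Ahat_M$, and the properties we need pass to $\Ahat_M$:
\begin{itemize}
\item \emph{Formal smoothness.} It passes to retracts. Alternatively, $\Nil(A)\otimes M \to \Nil(A/J)\otimes M$ is surjective because $M$ is flat and $\Nil(A) \to \Nil(A/J)$ is surjective for a nilpotent ideal $J$.
\item \emph{Ind-scheme structure.} $\Ahat_M$ is the equalizer of $\mathrm{id}$ and the idempotent $e$ on $\Ahat^{\mathbb N}$, so it is a closed sub-ind-scheme. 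Concretely, $\Ahat_M = \colim_{J,n} \bigl(\Ahat_M \times_{\Ahat^{\mathbb N}} \Spec R[x_J]/(x_J)^n\bigr)$, each term a closed subscheme of a finite scheme. This shows that $\Ahat_M$ is ind-finite, countable, and ind-infinitesimal, with condition i.\ of Proposition \ref{prop: ind-infinitesimal conditions} inherited from $\Ahat^{\mathbb N}$.
\end{itemize}

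For the converse in (i), suppose $(\Ahat_M,0)$ is a based $\alepho$-polydisk. Compute the tangent space: $T_0\Ahat_M(N) = \Nil(R\oplus N)\otimes M$ restricted to the fiber over $0$, which is $N\otimes_R M$. By Lemma \ref{lemma: tangent space is ML}, $M = T_0\Ahat_M(R)$ is flat Mittag-Leffler. For countable generation, write $\Ahat_M = \colim_{k\in\mathbb N}\Spec B_k$ with $B_k$ finite $R$-modules. The tangent functor is then $\colim_k \Hom(\epsilon^*\Omega_{B_k},-)$, and evaluating at $R$ expresses $M$ as a countable colimit of finitely generated modules. Hence $M$ is countably generated, and Theorem \ref{theorem: countably generated projective} gives projectivity.

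For part (ii), which is cited from Beilinson--Drinfeld, I will sketch the argument. Set $M = T_eX(R)$. By Lemma \ref{lemma: tangent space is ML} and the countability argument above, $M$ is countably generated projective. The goal is to construct a pointed morphism $X \to \Ahat_M$ inducing the identity on tangent spaces.
\begin{enumerate}
\item Write $X=\colim \Spec(R\oplus N_k)$ with $N_k$ finite nilpotent, using Proposition \ref{prop: ind-infinitesimal conditions}. The cotangent spaces $N_k/N_k^2$ form a pro-system with surjective transition maps, and its dual ind-system computes $M$.
\item Use projectivity of $M^\vee$-type pro-objects, or equivalently of $M$, to lift coordinates: choose compatible maps $N_k/N_k^2 \to$ \dots{} Concretely, a map $X\to\Ahat_M$ amounts to a compatible system of elements of $N_k\otimes M$. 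Choose such elements lifting the canonical elements of $N_k/N_k^2\otimes M$ corresponding to the identity. Since the index is countable, the lifts can be chosen compatibly by induction on $k$, which is the key use of countability.
\item Conversely, use formal smoothness of $X$ to produce $\Ahat_M \to X$, inducing the identity on tangent spaces, by lifting successively along the square-zero extensions $R\oplus \mathfrak m^{n}/\mathfrak m^{n+1}$ of the finite stages of $\Ahat_M$. Again countability gives compatible choices.
\item The composite $\Ahat_M\to X\to \Ahat_M$ is a pointed endomorphism that is the identity on tangent spaces. It is therefore an automorphism, being invertible by the standard formal inverse-function argument on the graded pieces. Hence $X\to\Ahat_M$ is split surjective and $\Ahat_M \to X$ is a closed immersion that is an isomorphism on tangent spaces.
\item Finally, formal smoothness of $X$ together with a graded argument shows that $\Ahat_M\to X$ is surjective on each finite stage, so it is an isomorphism.
\end{enumerate}

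I expect the main obstacle to be this final step together with the compatible choice of lifts in step 3. The difficulty is ensuring that an isomorphism on tangent spaces forces an isomorphism of ind-schemes. I would try to show that the associated graded of the $e$-adic filtration on each stage is a quotient of $\mathrm{Sym}$ of the cotangent module. Formal smoothness should then force the symmetric algebra to inject, comparing with $\Ahat_M$ stage by stage.
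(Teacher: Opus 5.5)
\textbf{Main gap: step 5 of part (ii).} Your step 2 matches the paper's construction of $f\colon X\to\Ahat_M$ with $D_ef=\mathrm{id}$: you take compatible lifts of the tautological elements of $N_k/N_k^2\otimes_R M$ along a countable system, and the transition maps on lifts are surjective because $M$ is flat.

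What remains in (ii) is showing that a pointed map inducing an isomorphism on $T_e$ is an isomorphism. Your step 5 leaves exactly this unproved. You flag it yourself, and the proposed graded argument (``formal smoothness should force $\Sym$ of the cotangent module to inject'') is not carried out. Steps 3--4 do not reduce the difficulty. The ``formal inverse-function argument'' you invoke for an endomorphism of $\Ahat_M$ is the very statement you need, and if you can prove it there you can apply it to $f$ directly.

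The missing idea is deformation-theoretic:
\begin{enumerate}
\item Because $X$ is an ind-scheme, it takes fibre products of rings to fibre products.
\item Let $A'=R\oplus N'$, let $J\subseteq N'$ be an ideal with $N'J=0$, and set $A=A'/J$. Then $(a,r+j)\mapsto(a,a+j)$ is a ring isomorphism $A'\times_R(R\oplus J)\cong A'\times_A A'$.
\item Hence the fibres of $X(A')\to X(A)$ over points lying above $e$ are torsors under $T_eX(J)=J\otimes_R M$. They are nonempty by formal smoothness. The same holds for $\Ahat_M$, and $f$ is equivariant via $D_ef=\mathrm{id}$.
\item Induct on $k$ along $R\oplus N/N^{k+1}\to R\oplus N/N^{k}$. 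This shows $X(R\oplus N)\to\Ahat_M(R\oplus N)$ is bijective over $e$ for every finite nilpotent $N$.
\item This suffices, because both sides are ind-infinitesimal.
\end{enumerate}
This is how the paper concludes. With it, the reverse map $\Ahat_M\to X$ and your steps 3--5 are unnecessary.

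\textbf{Smaller error: the converse of (i).} The modules $\Hom_R(\epsilon^*\Omega_{B_k},R)$ need not be finitely generated. For example, $\Hom_R(R/xR,R)=\mathrm{Ann}_R(x)$, and $R/xR=\epsilon^*\Omega_B$ for $B=R[t]/(t^2,xt)$, which can occur as a stage of $\Ahat_R$. So writing $M=\colim_k\Hom_R(\epsilon^*\Omega_{B_k},R)$ does not by itself give countable generation.

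You can fix this as the paper does:
\begin{enumerate}
\item By Lazard, write $M=\colim_\beta F_\beta$ with each $F_\beta$ finite free.
\item The pro-systems $\{\epsilon^*\Omega_{B_k}\}$ and $\{F_\beta^*\}$ both pro-represent $M\otimes_R-$, so they are pro-isomorphic.
\item Hence the image of each $\Hom_R(\epsilon^*\Omega_{B_k},R)$ in $M$ lies in the image of some $F_{\beta_k}$.
\item Therefore $M=\bigcup_k\mathrm{im}(F_{\beta_k})$ is countably generated.
\end{enumerate}

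Your ``if'' direction in (i) is correct. Realizing $\Ahat_M$ as the fixed locus of an idempotent on $\Ahat^{\mathbb N}$ is a reasonable alternative to the paper's Mittag-Leffler stable-image presentation.
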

\begin{proof}
    \begin{enumerate}
        \item First suppose that $(\Ahat_M,0)$ is a based $\alepho$-polydisk.
        By definition, $M = T_0 \Ahat_M$.
        By Lemma \ref{lemma: tangent space is ML}, $M$ is flat Mittag-Leffler; since $\Ahat_M$ is ind-infinitesimal and $\alepho$, $T_0\Ahat_M(-) = \colim_\alpha \Hom(N_\alpha,-)$ where the modules $N_\alpha$ are finitely presented and the colimit is countable. If we write $M$ as a colimit of finite free modules $F_\beta$, then the pro-systems $\{N_\alpha\}$ and $\{F_\beta^*\}$ are equivalent; the system of $N_\alpha$'s is countable, so we may refine the system of $F_\beta^\ast$'s to be countable. Thus $M$ is countably generated. By Theorem \ref{theorem: countably generated projective}, $M$ is countably generated projective.

        Conversely, if $M$ is countably generated projective, write $M = \colim_\alpha F_\alpha$ where each $F_\alpha$ is finite free and the colimit is countable. Since the pro-system $F_\alpha^*$ is Mittag-Leffler, by passing to the stable images $N_\alpha$, we obtain an equivalent pro-system of finitely generated modules with surjective transition maps.
        Then 
        \begin{align*} 
            \Ahat_M(A) &\cong \colim_\alpha \Hom_R(N_\alpha, \Nil(A)) \\
            &=\colim_\alpha \colim_{n\in\N} \Hom_{\CAlg_R} (\Sym(N_\alpha)/\Sym(N_\alpha)_{> n}, A)
        \end{align*}
        since each $N_\alpha$ is finitely generated.
        This exhibits $\Ahat_M$ as an ind-infinitesimal $\alepho$-ind-scheme. It is formally smooth since $M$ is flat.

        \item Suppose $(X,e)$ is a based $\alepho$-polydisk.
        Set $M = T_eX(R)$.
        We begin by constructing a suitable map $(X,e) \to (\Ahat_M,0)$.
        Write $(X,e) = \colim_{\alpha \in I} (X_\alpha,e_\alpha)$ where $I$ is countable.
        Then 
        \[\Hom((X,e),(\Ahat_M,0)) = \ilim_{\alpha \in I^{op}} \Hom((X_\alpha,e_\alpha), (\Ahat_M,0)).\]

        If $(X_\alpha,e_\alpha) = \Spec(R \oplus N_\alpha)$ where $N_\alpha$ is nilpotent, then 
        \[ T_eX = \colim_\alpha \Hom(N_\alpha/N_\alpha^2,-),\]
        so there is a tautological element $\iota_\alpha \in T_eX(N_\alpha/N_\alpha^2) = N_\alpha/N_\alpha^2 \otimes M$ corresponding to the identity endomorphism of $N_\alpha/N_\alpha^2$.
        Define the inverse system of sets
        \begin{align*} 
            S_\alpha &= \Ahat_M(R \oplus N_\alpha) \underset{\Ahat_M(R \oplus N_\alpha/N_\alpha^2)}{\times} \{\iota_\alpha\}\\
            &= \{f \in N_\alpha \otimes_R M \mid f \mapsto \iota_\alpha \in N_\alpha/N_\alpha^2 \otimes_R M\}.
        \end{align*}
        Since $M$ is flat, $\Ahat_M$ is formally smooth, so each $S_\alpha$ is nonempty. Moreover, the transition maps $X_\alpha \to X_\beta$ are closed immersions defined by nilpotent ideals and $\Ahat_M$ is formally smooth, so the transition maps $S_\beta \to S_\alpha$ are surjective.
        Since $I$ is countable, $\ilim_{\alpha \in I^{op}} S_\alpha$ is nonempty, and an element of this inverse limit defines a pointed map $f:X \to \Ahat_M$ such that $D_ef: M = T_eX \to T_e\Ahat_M = M$ is the identity endomorphism of $M$.
        We claim $f$ is an isomorphism. As both $X$ and $\Ahat_M$ are ind-infinitesimal ind-schemes, it suffices to verify that $f$ induces an isomorphism $X(A) \to \Ahat_M(A)$ whenever $A = R \oplus N$ for nilpotent $N$.
        But this follows from a deformation theory argument, since both $X$ and $\Ahat_M$ send pullbacks of rings to pullbacks and $f$ is an isomorphism on tangent spaces.\qedhere
    \end{enumerate}
\end{proof}

\begin{remark}
    There are based polydisks of uncountable type not of the form $\Ahat_M$ for any module $M$. See \cite[Example 7.12.19]{BD91_Hitchin}.
\end{remark}

Our next task is to deal with fpqc descent of based $\alepho$-polydisks.
It is known that finite-dimensional formal based polydisks form an fpqc stack \cite[§3.1]{Dri24}.
The proof uses that the tangent space to a finite-dimensional polydisk is a finite locally free module.
In the countable setting, it is no longer true that a countably generated projective module is locally free (see e.g. \cite[\href{https://stacks.math.columbia.edu/tag/05WG}{Tag 05WG}]{stacks-project}), even in the fpqc topology. Thus a different approach is needed. 

Recall that an ind-affine ind-scheme $X\to \Spec R$ is said to be \emph{coflat} if its pro-coordinate ring, as a pro-$R$-module, is equivalent to a pro-system of projective modules \cite[Definition 2.5.7]{AM25}. If $M= \colim_i F_i$ is a flat Mittag-Leffler $R$-module, then the ind-scheme $\Ahat_M$ is coflat, since its pro-coordinate ring is $\ilim_i \widehat{\Sym} F_i^*$. 
Thus every based $\alepho$-polydisk is coflat.

\begin{proposition}\label{prop: coalgebras}
    The category of $\alepho$ ind-finite ind-schemes coflat over $R$ is equivalent to the category of cocommutative coalgebras $C$ over $R$ such that $C$ is a countably generated projective $R$-module.
    The equivalence sends $C$ to 
    \[ \coSpec(C) (A) = \{ x \in A \otimes_R C \mid \Delta x = x \otimes x \text{ in } A \otimes_R (C \otimes_R C), \epsilon(x) = 1 \text{ in } A\}.\]
\end{proposition}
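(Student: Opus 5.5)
We construct the equivalence in two directions and check that they are mutually inverse.

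\textbf{From ind-schemes to coalgebras.} Let $X = \colim_\alpha \Spec B_\alpha$ be an $\alepho$ ind-finite ind-scheme, coflat over $R$, where the index set is countable and the transition maps $B_\beta \to B_\alpha$ are surjective. The pro-coordinate ring is $\{B_\alpha\}$. Coflatness says this pro-system is equivalent to a pro-system $\{P_\alpha\}$ of projective modules; after reindexing we may take the $P_\alpha$ to be finite projective, since each $B_\alpha$ is finite. We set
\[ C := \colim_\alpha B_\alpha^\vee = \colim_\alpha \Hom_R(B_\alpha, R), \]
which agrees with $\colim_\alpha P_\alpha^\vee$. The dual of the multiplication $B_\alpha \otimes B_\alpha \to B_\alpha$ makes each $P_\alpha^\vee$, and hence $C$, a cocommutative counital coalgebra. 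Here we use that duality converts tensor products of finite projective modules into tensor products. The module $C$ is a countable colimit of finite projectives whose dual pro-system $\{P_\alpha\}$ is Mittag-Leffler; its images stabilize because the transition maps are surjective up to equivalence. So $C$ is flat Mittag-Leffler and countably generated, hence projective by Theorem \ref{theorem: countably generated projective}.

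\textbf{From coalgebras to ind-schemes.} Conversely, let $C$ be a coalgebra that is countably generated projective. Write $C$ as a countable filtered union of finitely generated subcoalgebras $C_\alpha$; this uses the fundamental theorem of coalgebras, namely that every finite subset lies in a finitely generated subcoalgebra. We then want
\[ \coSpec(C) = \colim_\alpha \Spec(C_\alpha^\vee). \]
The main obstacle is that a subcoalgebra $C_\alpha$ need not be projective or a direct summand over a general ring $R$, so $C_\alpha^\vee$ may behave badly.

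To get around this, we work with $A$-points directly. For $x \in A\otimes_R C$, group-likeness is the condition that the corresponding map $C^\vee \to A$ is an algebra map. Since $C$ is flat Mittag-Leffler, $A \otimes_R C = \colim_\alpha \Hom(N_\alpha, A)$, where the $N_\alpha$ are the stable images of the duals of finite free approximations, by Proposition \ref{prop: BD criterion for FML}. One then shows that the kernels defining group-like elements cut out closed subschemes $\Spec(B_\alpha)$ with $B_\alpha$ a quotient of $\Sym(N_\alpha)$ that is finite over $R$. Finiteness comes from the counit condition together with cocommutativity and the fact that $N_\alpha$ is finitely generated. The resulting pro-system $\{B_\alpha\}$ is equivalent to $\{N_\alpha\}$, which is coflat. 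This gives an $\alepho$ ind-finite coflat ind-scheme.

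\textbf{The two constructions are inverse.} With $C = \colim_\alpha B_\alpha^\vee$, we compute
\[ \Hom_{\CAlg}(B_\alpha, A) = \{\text{group-likes in } A \otimes B_\alpha^\vee\}, \]
which holds because $B_\alpha$ is equivalent to a finite projective module. Passing to the colimit, and using that $A\otimes -$ commutes with colimits, identifies $X$ with $\coSpec(C)$. In the other direction, the pro-dual of $\coSpec(C)$ recovers $C$ by the Mittag-Leffler identification. Full faithfulness follows from the fact that morphisms on both sides are computed levelwise by these dualities:
\[ \Hom(X,Y) = \lim_\alpha \colim_\beta \Hom(B'_\beta, B_\alpha), \]
and this matches coalgebra maps $\colim_\alpha B_\alpha^\vee \to \colim_\beta B_\beta'^\vee$, since each $B_\alpha^\vee$ is finitely generated and so maps into a finite stage.
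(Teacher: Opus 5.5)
Your overall plan is the same as the paper's. The paper's proof is just two citations: \cite[Proposition 2.5.5]{AM25} identifies $\aleph_0$ coflat ind-finite ind-schemes with pro-finite coflat Mittag-Leffler pro-algebras, and \cite[Proposition 2.4.1]{AM25} identifies these, by linear duality, with countably generated projective coalgebras. Reproving these by hand is where your proposal has genuine gaps.

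\textbf{Levelwise claims that are false.} Several steps are carried out at a single level $\alpha$, where they do not hold. Coflatness only says the pro-system $\{B_\alpha\}$ is pro-isomorphic to one of finite projectives; an individual $B_\alpha$ need not be projective.
\begin{itemize}
\item For example, over $R=k[t]$ the algebras $B_n=R[x]/(x^n,tx^{n-1})$ present the coflat ind-scheme $\widehat{\mathbb{A}}^1_R$, yet $x^{n-1}\neq 0$ is $t$-torsion in $B_n$.
\item Every $\phi\in B_n^\vee$ therefore kills $x^{n-1}$. So the tautological point $\mathrm{id}\in\Hom_{\CAlg}(B_n,B_n)$ is not in the image of $B_n\otimes_R B_n^\vee\to\Hom_R(B_n,B_n)$. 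Your identification of $\Hom_{\CAlg}(B_\alpha,A)$ with group-likes in $A\otimes_R B_\alpha^\vee$ fails, and $B_\alpha^\vee$ is not a coalgebra.
\item $B_\alpha^\vee$ need not be finitely generated if $R$ is not noetherian, so your full-faithfulness argument fails as written.
\item ``Each $P_\alpha^\vee$ is a coalgebra'' does not make sense, since the $P_\alpha$ carry no multiplication.
\end{itemize}
All of these identifications must be made only after passing to the colimit, via $\colim_\alpha\Hom_R(B_\alpha,-)\cong\colim_\alpha\Hom_R(P_\alpha,-)\cong C\otimes_R-$. Morphisms should be computed through the finitely generated modules $P_\alpha^\vee$.

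\textbf{The missing rigidification step.} More substantively, the sentence ``one then shows \ldots\ the resulting pro-system $\{B_\alpha\}$ is equivalent to $\{N_\alpha\}$'' hides the real content of the proposition. The coalgebra structure on $C$ only makes $\{N_\alpha\}$ an algebra object in pro-modules. It is given by maps $N_\beta\otimes N_\beta\to N_\alpha$ for $\beta\gg\alpha$, which need not descend to $N_\alpha\otimes N_\alpha$. One must show this can be replaced by a pro-system of honest finite algebras. A pro-isomorphism requires that $\ker(N_\beta\to B_\beta)$ die in some $N_\alpha$, and nothing in your sketch shows this.

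One way to supply it:
\begin{itemize}
\item $\widehat N:=\lim_\alpha N_\alpha\cong C^\vee$ is an honest ring.
\item Since the system is countable with surjective transitions, $\widehat N\to N_\alpha$ is surjective; call its kernel $K_\alpha$. The pro-multiplication gives $\widehat N K_\beta\subseteq K_\alpha$ for $\beta\gg\alpha$.
\item Hence $B_\alpha:=\widehat N/\widehat N K_\alpha$ is a finite $R$-algebra. For $\alpha\gg\alpha'$ the transition $N_\alpha\to N_{\alpha'}$ factors through $B_\alpha$, so $\{B_\alpha\}\simeq\{N_\alpha\}$.
\item Group-likes in $A\otimes_R C$ are exactly the $R$-algebra maps $\widehat N\to A$ that factor $R$-linearly through some $N_\alpha$. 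That is, $\coSpec(C)(A)=\colim_\alpha\Hom_{\CAlg}(B_\alpha,A)$.
\end{itemize}
This is what the paper imports from \cite{AM25}. With it, and with the levelwise claims moved to the colimit, your outline goes through. (Minor: cocommutativity plays no role in the finiteness of $B_\alpha$.)
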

\begin{proof}
    By \cite[Proposition 2.5.5]{AM25}, $\alepho$ coflat ind-finite ind-schemes are equivalent to pro-finite coflat Mittag-Leffler pro-$R$-algebras $A$ whose underlying pro-module is $\alepho$, where the equivalence sends $A$ to $\SpInd(A) = \Hom_R(A,-):\CAlg_R\to \Set$.
    Under linear duality $A \mapsto A^\vee$, \cite[Proposition 2.4.1]{AM25}, such pro-$R$-algebras are exactly dual to countably generated projective $R$-coalgebras,
    and $\SpInd(A) = \coSpec(A^\vee)$.
\end{proof}

\begin{theorem}\label{theorem: apolyd is a stack}
    $R \mapsto \aPolyd(R)$ is a stack in the fpqc topology.
\end{theorem}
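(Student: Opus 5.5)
Using Proposition \ref{prop: coalgebras}, we translate descent for based $\alepho$-polydisks into descent for coalgebras. Each based $\alepho$-polydisk $(X,e)$ over $R$ is coflat, $\alepho$ and ind-finite, so $X \cong \coSpec(C)$ for a cocommutative coalgebra $C$ that is countably generated projective as an $R$-module. The section $e$ corresponds to a grouplike element $1 \in C$, i.e.\ a coalgebra map $R \to C$. In this language:
\begin{enumerate}
    \item ind-infinitesimality says that $C$ is \emph{conilpotent} with respect to $1$;
    \item formal smoothness says (via Theorem \ref{theorem: formal alepho-Lie varieties}) that $C \cong \Sym^c(M)$ is the cofree conilpotent cocommutative coalgebra on $M = T_eX(R)$, namely the divided-power coalgebra $\bigoplus_n \Gamma^n M$ dual to $\widehat{\Sym}$, where $M$ is countably generated projective.
\end{enumerate}
So $\aPolyd(R)$ is equivalent to the groupoid of pointed coalgebras fpqc-locally of this shape. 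Descent is then a question about modules with extra structure.

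The steps, in order:
\begin{enumerate}
    \item Show that the functor is a prestack, i.e.\ that $\Isom$-sheaves satisfy descent. Morphisms of ind-schemes that are fpqc sheaves satisfy descent because an ind-scheme is an fpqc sheaf. Concretely, $\Hom(X,Y) = \ilim_\alpha Y(R\oplus N_\alpha)$, and each term is a sheaf in $R$ compatibly with base change, since $N_\alpha$ is finite over $R$.
    \item Given an fpqc cover $R \to R'$ and a polydisk $X'$ over $R'$ with descent datum, pass to the coalgebra $C'$. This is a countably generated projective $R'$-module with a descent datum compatible with comultiplication, counit and unit. By fpqc descent for modules, $C'$ descends to an $R$-module $C$. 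The coalgebra structure maps descend too, since tensor products commute with base change. Hence $C$ is a pointed cocommutative coalgebra.
    \item Show that $C$ is countably generated projective. Flatness descends, and so does the Mittag-Leffler property: this is the Raynaud--Gruson theorem that flat Mittag-Leffler descends along faithfully flat maps (Stacks Project, Tag 05A5 and nearby). Countable generation also descends. An alternative is to descend the projective tangent module $M$ together with the filtration $C_{\le n}$ by coradical degree, whose graded pieces are $\Gamma^n M$; projectivity of $C$ then follows from that of $M$ once $M$ is known to be projective.
    \item Check that $\coSpec(C)$ is a based $\alepho$-polydisk. Conilpotence and the cofreeness/formal smoothness condition can be checked after faithfully flat base change. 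Indeed, $C_{\le n}$ is defined as a kernel, which commutes with flat base change, and the map $C \to \Sym^c(M)$ is an isomorphism iff it is so after $R \to R'$.
    \item Finally, $\coSpec(C)\times_R R' = \coSpec(C')$ recovers $X'$ with its descent datum.
\end{enumerate}

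The main obstacle is step 3: descent of countably generated projectivity along an fpqc (not fppf) cover. As noted before the statement, such modules need not be locally free even fpqc-locally, so we cannot reduce to finite free descent. I would first invoke Raynaud--Gruson's descent of projectivity along faithfully flat ring maps (Stacks Project, Tag 05JF, ``descending projectivity''). Combined with Theorem \ref{theorem: countably generated projective}, this requires only checking that countable generation descends. For that I would use the filtration by $C_{\le n}$ and the fact that each $\Gamma^n M$ is the image of a countably generated module.
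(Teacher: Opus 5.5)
Your outline follows the paper up to the last step: translate via Proposition~\ref{prop: coalgebras}, descend $C$ with its coalgebra structure, descend countably generated projectivity by Raynaud--Gruson, and check conilpotence after base change. Step~4, however, has a genuine gap. You never construct the map $C \to \Sym^c(M)$ over $R$ whose bijectivity you propose to check after base change, and no canonical such map exists.

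By cofreeness, a coaugmented coalgebra map $C \to \Gamma(M)$ amounts to an $R$-linear map $C^+ = \ker(\epsilon) \to M$. For this to have any chance of being an isomorphism, it must restrict to the identity on $M = \Prim(C)$. So you need a retraction of $\Prim(C) \hookrightarrow C^+$ over $R$. One exists over $R'$, but the identification $C' \cong \Gamma(M')$ is a non-canonical choice of formal coordinates. Its two pullbacks to $R' \otimes_R R'$ need not be intertwined by the descent datum, which may act by nonlinear coordinate changes, so it does not descend. Producing a global comparison map is precisely the content of the theorem; checking that a given map becomes an isomorphism after faithfully flat base change is the easy part. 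Relatedly, you must define $M$ over $R$ and show it is countably generated projective; your step~3 alternative simply assumes this.

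The paper fills the gap as follows. Since primitives form a kernel, $\Prim(C)\otimes_R R' = \Prim(C')$, so $M = \Prim(C)$ is countably generated projective by Raynaud--Gruson. It then reruns the argument of Theorem~\ref{theorem: formal alepho-Lie varieties}(ii). Write $\coSpec(C)$ as a countable colimit of $\Spec(R\oplus N_\alpha)$. Formal smoothness of $\Ahat_M$ makes the pointed maps inducing the identity on tangent spaces a countable inverse system of nonempty sets with surjective transitions. This yields $f\colon \coSpec(C)\to \Ahat_M$; $f$ becomes an isomorphism over $R'$ by the deformation argument, hence is one by descent.

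In your coalgebra language there is an alternative fix. After base change, $C^+/\Prim(C)$ becomes $\bigoplus_{n\ge 2}\Gamma^n M'$, so it is projective by Raynaud--Gruson and a retraction exists. The induced $C\to\Gamma(M)$ is the identity on $\mathrm{gr}_1$ of the coradical filtration. Over $R'$, $\mathrm{gr}_n$ embeds in $\mathrm{gr}_1^{\otimes n}$ with image the symmetric tensors, so the map is an isomorphism on every $\mathrm{gr}_n$, hence an isomorphism.

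Finally, step~3 is not the real obstacle. Tag 05A9, which the paper cites, gives descent of countably generated projective modules directly. Countable generation descends trivially by faithful flatness, so the coradical-filtration detour there is unnecessary.
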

\begin{proof} 
    Suppose $R \to R'$ is a faithfully flat morphism of rings, and $p_1,p_2: R' \to R' \otimes_R R'$ the standard morphisms.
    We must show that if $(X',e') \to \Spec R'$ is a based $\alepho$-polydisk equipped with an isomorphism $p_1^* (X',e') \cong p_2^*(X',e')$ satisfying the cocycle condition, then $(X',e')$ is the pullback of a based $\alepho$-polydisk from $\Spec R$.

    By Proposition \ref{prop: coalgebras}, $(X',e') = \coSpec(C')$ where $C'$ is a coaugmented cocommutative coalgebra over $R'$; moreover, we have isomorphisms $p_1^*C' \cong p_2^*C'$ satisfying the cocycle condition. Hence $C' = C \otimes_R R'$ where $C$ is a coaugmented cocommutative coalgebra over $R$. By flat descent for countably generated projective modules (Raynaud--Gruson, \cite[\href{https://stacks.math.columbia.edu/tag/05A9}{Tag 05A9}]{stacks-project}), $C$ is a countably generated projective $R$-module. 
    
    We claim that $(\coSpec(C),e)$ is a based $\alepho$-polydisk.
    Since $R \to R'$ is faithfully flat, $\Prim(C') = \Prim(C) \otimes_R R'$. By Theorem \ref{theorem: formal alepho-Lie varieties}, $\Prim(C')$ is a countably generated projective $R'$-module; again by Raynaud and Gruson's theorem, $M = \Prim(C)$ is a countably generated projective $R$-module.
    Since $\coSpec(C')$ is ind-infinitesimal, $C'$ is conilpotent, so $C$ is also conilpotent.
    Thus $\coSpec(C)$ is ind-infinitesimal.
    Since $\Ahat_M$ is formally smooth, copying the argument of Theorem \ref{theorem: formal alepho-Lie varieties} (ii) shows that there exists a morphism $f: (\coSpec(C),e) \to (\Ahat_M,0)$ inducing an isomorphism on tangent spaces.
    The base change $f \times_{\Spec R} \Spec R'$ is an isomorphism.
    By Proposition \ref{prop: coalgebras}, it suffices to show $f$ induces an isomorphism on the level of coalgebras, which follows from faithfully flat descent along $R \to R'$. Thus, $f$ is an isomorphism and $(\coSpec(C),e)$ is a based $\alepho$-polydisk.
\end{proof}

\subsection{Formal Lie groups of countable type}

\begin{definition}
    A \emph{formal Lie group of countable type}, or \emph{formal $\alepho$-Lie group}, over $R$ is a functor from $R$-algebras to groups whose underlying set-valued functor is a based $\alepho$-polydisk.
    Let $\aFLG$ be the category of formal $\alepho$-Lie groups, and let $\aCFLG(R)$ be the category of commutative formal $\alepho$-Lie groups over $R$.
\end{definition}

If $E \to \Spec R$ is a commutative formal $\alepho$-Lie group, then $E$ is a coflat ind-finite ind-scheme, so $E$ is reflexive with respect to Cartier duality \cite[Corollary 3.1.6]{AM25}.

\begin{lemma}\label{lemma: restriction of fgl-torsors on affines}
    Suppose that $E \to \Spec R$ is a commutative formal $\alepho$-Lie group and 
    \[ 0 \to I \to \tilde S \to S \to 0\]
    is an extension of rings where $I$ is nilpotent.
    Then the restriction map $H^1_{\fpqc}(\Spec \tilde S,E) \to H^1_{\fpqc}(\Spec S, E)$ of cohomology on the small fpqc sites is injective.
\end{lemma}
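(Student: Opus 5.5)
We show that every $E$-torsor $P$ over $\Spec \tilde S$ whose restriction $P|_S$ is trivial is itself trivial. Since $H^1$ classifies fpqc torsors, this gives injectivity of the restriction map. Triviality of $P|_S$ means there is a section $s \in P(S)$, and the task is to lift $s$ to $\tilde S$.

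First we show that $P$ is representable by an ind-scheme that is formally smooth over $\tilde S$.

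\begin{enumerate}
\item Choose a faithfully flat $\tilde S \to \tilde S'$ trivializing $P$. Then $P \times_{\tilde S} \tilde S' \cong E_{\tilde S'}$.
\item By Theorem \ref{theorem: formal alepho-Lie varieties}, $E \cong \Ahat_M$ with $M$ countably generated projective, so $E_{\tilde S'}$ is coflat $\alepho$ ind-finite.
\item By Proposition \ref{prop: coalgebras}, it corresponds to a countably generated projective coalgebra $C'$ over $\tilde S'$.
\item The descent datum for $P$ gives a descent datum on $C'$. Raynaud--Gruson descent, as in the proof of Theorem \ref{theorem: apolyd is a stack}, produces a coalgebra $C$ over $\tilde S$, and then $P = \coSpec(C)$.
\end{enumerate}

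Formal smoothness is an infinitesimal lifting property, and it can be checked after a faithfully flat base change. Here it holds because $E$ is formally smooth. However, checking it fpqc-locally only yields lifts locally.

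So the main obstacle is to get a global lift of $s$ along $\tilde S \to S$ for a torsor that is only locally isomorphic to $\Ahat_M$. The plan is to run the standard obstruction argument.

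\begin{enumerate}
\item Filter $I$ by its powers $I \supseteq I^2 \supseteq \cdots$. This reduces us to the case $I^2 = 0$.
\item When $I^2=0$, lifts of $s$ fpqc-locally exist by formal smoothness.
\item Two local lifts differ by an element of $T_eE \otimes I = M \otimes_R I$.
\item Hence the obstruction to a global lift is a class in $H^1_{\fpqc}(\Spec S, \widetilde{M \otimes_R I})$, the first cohomology of the quasi-coherent sheaf attached to the $S$-module $M\otimes_R I$.
\end{enumerate}

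To identify the lifts, use Lemma \ref{lemma: tangent space is ML}: the tangent functor is tensoring with a flat Mittag-Leffler module, so the set of lifts is a torsor under the quasi-coherent sheaf $M \otimes_R I$.

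Quasi-coherent cohomology vanishes on the affine scheme $\Spec S$ in the fpqc topology. Therefore a global lift exists. Inducting up the filtration of $I$ lifts $s$ all the way to $\tilde S$, so $P$ is trivial.

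Alternatively, we can avoid representability of $P$ altogether by working directly with Čech cocycles. The idea is to modify a Čech cocycle for $P$ over a cover of $\tilde S$, on which $P|_S$ is a coboundary, step by step up the filtration of $I$. At each stage, the discrepancy lies in Čech cohomology of $M\otimes I^k/I^{k+1}$ on an affine cover, and this cohomology vanishes.

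One technical point is that $M$ need not be finitely generated. This causes no trouble: $M$ is a direct summand of a free module, so $M\otimes I$ is still quasi-coherent and its higher Čech cohomology on affines vanishes.
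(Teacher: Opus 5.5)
Your core argument is correct and is essentially the paper's proof. After reducing to $I^2=0$, formal smoothness of $E$ gives the exact sequence $0 \to \underline{M\otimes_R I} \to E_{\tilde S} \to i_*E_S \to 0$ on flat $\tilde S$-algebras, and your torsor of lifts of $s$ is exactly the obstruction the paper kills with $H^1_{\mathrm{fpqc}}(\Spec \tilde S, \underline{M\otimes_R I}) = 0$. The paper phrases this through long exact sequences and the five-term sequence of the Grothendieck spectral sequence for $i_*$, whereas you use the section $s$ directly.

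Two corrections. The torsor of lifts lives on the site of $\Spec \tilde S$, not $\Spec S$. The opening representability discussion is unnecessary, and its claim that formal smoothness of $P$ can be checked fpqc-locally is unjustified as stated: applied to the thickening $\Spec S \hookrightarrow \Spec \tilde S$, it is precisely the lemma you are proving.
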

\begin{proof}
    By induction, we may assume $I^2 = 0$. Let $X = \Spec S, \tilde X = \Spec \tilde S$, $i: X \to \tilde X$ the closed immersion defined by $I$.
    The restriction map 
    $H^1_{\fpqc}(\tilde X, E) \to H^1_{\fpqc}(X, E)$ factors as 
    \[ 
        H^1_{\fpqc}(\tilde X, E_{\tilde X}) \to H^1_{\fpqc}(\tilde X, i_*E_{X}) \to H^1_{\fpqc}(X, E_X)
    \]
    where $i_*$ is the pushforward of fpqc abelian sheaves.
    The five-term exact sequence from the Grothendieck spectral sequence associated to $R\Gamma_{\fpqc}(\tilde X,Ri_*(-)) = R\Gamma_{\fpqc}(X,-)$
    for $E_X$
    begins 
    \[ 0 \to H^1_{\fpqc}(\tilde X, i_*E_X) \to H^1_{\fpqc}(X,E_X) \to H^0_{\fpqc}(\tilde X, R^1i_*E_X) \to \cdots\]
    so $H^1_{\fpqc}(\tilde X, i_*E_X) \to H^1_{\fpqc}(X,E_X)$ is injective.
    Thus it suffices to show 
    \[ H^1_{\fpqc}(\tilde X, E_{\tilde X}) \to H^1_{\fpqc}(\tilde X, i_*E_X)\]
    is injective. If $M$ is the tangent space of $E$ over $R$,
    then for any $\tilde S$-algebra $A$ there is a short exact sequence 
    \[ 0 \to M \otimes_{R} (IA) \to E(A) \to E(A/IA) \to 0\]
    where exactness on the right is since $E$ is formally smooth.
    When $A$ is flat over $\tilde{S}$, we have $M \otimes_R (IA) = (M \otimes_R I)\otimes_{\tilde S} A$, so there is an exact sequence of sheaves
    \[ 0 \to \underline{M \otimes_R I} \to E_{\tilde S} \to i_*E_S \to 0\]
    on the small fpqc site of $\tilde X$. By fpqc descent of quasicoherent sheaves and Serre's theorem, $H^1_{\fpqc}(\tilde X, \underline{M \otimes_R I}) = 0$, which proves the claim.
\end{proof}

\begin{theorem}\label{theorem: alepho-Lie closed under extensions}
    Let $E'$ and $E''$ be commutative formal $\alepho$-Lie groups over $R$ and 
    \[ 0 \to E' \to E \to E'' \to 0\]
    be a short exact sequence of abelian fpqc sheaves.
    Then $E$ is a formal $\alepho$-Lie group.
\end{theorem}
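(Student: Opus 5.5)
We want to show that $E$, an extension of $E''$ by $E'$ as fpqc sheaves, has underlying pointed functor a based $\alepho$-polydisk. The plan is to first show that $E \to E''$ is an $E'$-torsor that is trivial as a torsor, so that $E \cong E' \times E''$ as pointed sheaves of sets. The desired properties then follow from the fact that products of based $\alepho$-polydisks are based $\alepho$-polydisks, since $\Ahat_{M'} \times \Ahat_{M''} = \Ahat_{M' \oplus M''}$ and $M' \oplus M''$ is countably generated projective (Theorem \ref{theorem: formal alepho-Lie varieties}). So the whole argument reduces to producing a pointed section $s: E'' \to E$ of the quotient map.

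To construct the section, use Theorem \ref{theorem: formal alepho-Lie varieties} (ii) to write $E'' = \colim_\alpha \Spec(R \oplus N_\alpha)$, a countable colimit along closed immersions with each $N_\alpha$ a finite nilpotent $R$-algebra. A section of $E \to E''$ is then a compatible family of elements $s_\alpha \in E(R\oplus N_\alpha)$ lifting the tautological points $x_\alpha \in E''(R\oplus N_\alpha)$ and restricting to $0$ at the base point. For each $\alpha$, the obstruction to lifting $x_\alpha$ lies in $H^1_{\fpqc}(\Spec(R\oplus N_\alpha), E')$. By Lemma \ref{lemma: restriction of fgl-torsors on affines}, applied to the nilpotent extension $R \oplus N_\alpha \to R$, this group injects into $H^1_{\fpqc}(\Spec R, E')$. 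The torsor pulled back along $x_\alpha$ restricts over $\Spec R$ to the pullback along $e$, which is trivial because $0 \in E(R)$ lifts $0$. Hence every lifting set $L_\alpha$ of pointed lifts is nonempty. After adjusting by an element of $E'(R)$, each $L_\alpha$ becomes a torsor under $\ker(E'(R\oplus N_\alpha) \to E'(R)) = E'(N_\alpha)$.

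Next, we need $\ilim_\alpha L_\alpha \neq \emptyset$. Since the index system is countable, it suffices to show the transition maps $L_\beta \to L_\alpha$ are surjective. These maps are equivariant for $E'(N_\beta) \to E'(N_\alpha)$, which is surjective because $E'$ is formally smooth and $R \oplus N_\beta \to R\oplus N_\alpha$ is a surjection with nilpotent kernel. Equivariant maps between nonempty torsors along a surjective group map are surjective, so the inverse limit is nonempty, exactly as in the proof of Theorem \ref{theorem: formal alepho-Lie varieties}. An element of it gives a pointed morphism $s: E'' \to E$ splitting the projection. Then $(a,b) \mapsto a + s(b)$ is an isomorphism of pointed fpqc sheaves $E' \times E'' \to E$, since $E$ is an $E'$-torsor over $E''$ with a section.

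I expect the main obstacle to be the obstruction step. It requires checking that the pullback of the torsor $E \to E''$ along a point of $E''(A)$ is classified by a class in $H^1$ of the small fpqc site of $\Spec A$, so that Lemma \ref{lemma: restriction of fgl-torsors on affines} applies. It also requires identifying the restriction to $\Spec R$ with the trivial torsor via the identity section. Once that identification is in place, the rest is the Mittag-Leffler-type countable limit argument already used earlier.
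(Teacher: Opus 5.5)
Your proposal is correct and matches the paper's proof essentially step for step. Both use Lemma \ref{lemma: restriction of fgl-torsors on affines} to trivialize the pulled-back torsors over the nilpotent thickenings, formal smoothness of $E'$ to make the transition maps on pointed sections surjective, a countable inverse limit to obtain a pointed section, and the resulting pointed isomorphism $E \cong E' \times E''$; your extra details (normalizing lifts by $E'(R)$, the torsor-equivariance argument, and $\Ahat_{M'} \times \Ahat_{M''} = \Ahat_{M' \oplus M''}$) spell out points the paper leaves implicit.
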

\begin{proof}
    First, we show $E \to E''$ has a set-theoretic section.
    Note that $E$ is an $E'$-torsor over $E''$ with a trivialization at $0 \in E''$.
    Since $E''$ is an $\alepho$-Lie group, it is a countable colimit $\colim_\alpha (X_\alpha,e_\alpha)$ of finite $R$-schemes along nil-immersions. By Lemma \ref{lemma: restriction of fgl-torsors on affines}, $E \times_{E''} X_\alpha$ is a trivial torsor.
    Let $S_\alpha$ be the set of sections of $E \times_{E''} X_\alpha \to X_\alpha$ whose restriction along $e_{\alpha}$ is $0 \in E'$.
    Since $E'$ is formally smooth, the transition maps $S_\alpha \to S_\beta$ are surjective. Moreover, the pro-system of $S_\alpha$'s is countable, so there exists an element of the inverse limit, giving a section of $E \to E''$.
    
    The section $\sigma$ shows that the underlying pointed set-valued functor of $(E,0)$ is isomorphic to $(E' \times E'', 0 \times 0)$. 
    Since the product of based $\alepho$-polydisks is an $\alepho$-polydisk, $E$ is a formal $\alepho$-Lie group.
\end{proof}

In this article, we use the term formal Lie group to refer specifically to formal $\alepho$-Lie groups of finite type: 
\begin{definition}
    A \emph{formal Lie group} is a group object in based polydisks over $R$, that is, in $\alepho$-polydisks whose tangent space is a finite locally free $R$-module.
\end{definition}
This definition essentially appears in \cite{Dri24}. See \cite[Chapter II, Definition 1.1.5]{Mes72} for an equivalent formulation of this notion.

\subsection{Cartier theory in countable type}

Cartier theory is an equivalence of categories between commutative ``formal Lie groups'' and certain modules over the Cartier ring $\E_R$ from §1. Cartier's original papers \cite{Car67Groupes, Car67Modules}
treat formal Lie groups whose underlying ind-scheme is $\Ahat^r$ for some $r$, as does Hazewinkel's monograph \cite[Chapter V]{Haz78}. Zink's treatment of Cartier theory in \cite{Zin84} both relaxes the requirement for global coordinates and extends the theory beyond the finite-dimensional case. However, Zink's treatment uses slightly different language, generally avoiding geometric objects like ind-schemes. In this section, we provide the translation between Zink's language and our own, and give a Cartier theory for formal $\alepho$-Lie groups by quoting Zink's results. 
As a corollary, we give vanishing results for the group $\Ext^1_{\fpqc}(\What,-)$ (Proposition \ref{prop: ext1 from What}). This is the main reason for our treatment of formal $\alepho$-Lie groups, and will be used in the sequel for calculating $\tau^{\leq 1}R\Hom_{\fpqc}(\What,-)$.

Given a ring $R$, let $\Nil_R$ be the category of nilpotent $R$-algebras. 
\begin{definition}\label{def: Zink formal group}
    A \emph{Zink commutative formal group} is a functor $\Nil_R \to \Ab$ which is exact and commutes with infinite direct sums.
\end{definition}

Let $\aug: \Nil_R \to (\CAlg_R)_{/R}$ be the functor taking a nilpotent $R$-algebra $N$ to the augmented $R$-algebra $R\oplus N \to R$.
If $X: \CAlg_R \to \Set$ is a functor equipped with a section $e: \Spec R \to X$, then $\aug^*X: \Nil_R \to \Set$ is the functor taking $N$ to $X(R \oplus N) \times_{X(R)} \{e\}$.

\begin{proposition}\label{prop: comparison to Zink FLG}
    Let $R$ be a commutative ring.
    Then sending $X$ to $\aug^*X$ is an equivalence of categories 
    from commutative formal $\alepho$-Lie groups over $R$ to Zink commutative formal groups over $R$ whose tangent space to the identity is a countably generated projective $R$-module.
\end{proposition}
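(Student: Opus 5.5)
We need to show three things: that $\aug^*X$ is a Zink commutative formal group, that $X \mapsto \aug^*X$ is fully faithful, and that its essential image is exactly the Zink groups with countably generated projective tangent space.

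\textbf{$\aug^*X$ is a Zink group.} Let $X$ be a commutative formal $\alepho$-Lie group. By Theorem \ref{theorem: formal alepho-Lie varieties}(ii), the underlying pointed functor of $X$ is $\Ahat_M$ with $M = T_eX(R)$ countably generated projective. Hence
\[ \aug^*X(N) = \Nil(R\oplus N)\otimes_R M \cap (N \otimes_R M) = N\otimes_R M\]
as sets, functorially in $N$. Since $M$ is flat, this set-valued functor preserves injections and finite limits. It also commutes with infinite direct sums, where the direct sum of nilpotent algebras is taken with zero cross products. This makes sense as a sum of nilpotent algebras only when the nilpotence orders are uniformly bounded, and $N \otimes M$ commutes with it.

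\textbf{Exactness of the group-valued functor.} Exactness of a group-valued functor concerns kernels and surjectivity. Both are detected on underlying sets once we know the functor is left exact. For a surjection $N \to N''$ of nilpotent algebras, surjectivity of $N\otimes M \to N''\otimes M$ is clear. The kernel statement follows from flatness of $M$. So $\aug^*X$ is a Zink group, and its tangent space $\aug^*X(\epsilon R) = M$ is countably generated projective. Here $\epsilon R$ denotes square-zero $R$.

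\textbf{Full faithfulness.} Because $X$ is ind-infinitesimal, Proposition \ref{prop: ind-infinitesimal conditions}(iii) gives $X = \colim_\alpha \Spec(R\oplus N_\alpha)$. A morphism of pointed functors $X \to Y$ is therefore determined by, and can be reconstructed from, the compatible family of elements
\[ \aug^*Y(N_\alpha) \ni \text{image of the tautological point}.\]
Concretely,
\[ \Hom_{\mathrm{pt}}(X,Y)=\ilim_\alpha Y(R\oplus N_\alpha)\times_{Y(R)}\{e\},\]
and by Yoneda over $\Nil_R$ this equals $\Hom(\aug^*X, \aug^*Y)$. For the Yoneda identification, $\aug^*X = \colim_\alpha \Hom_{\Nil}(\text{coordinate nilpotent algebra of } N_\alpha, -)$, so we need $\aug^*X$ to be the corresponding ind-corepresented functor on $\Nil_R$. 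This holds by condition (ii) of Proposition \ref{prop: ind-infinitesimal conditions}: any point over $R\oplus N$ factors through some $R\oplus N_\alpha$ compatibly with the augmentation. Group homomorphisms correspond because $\aug^*$ preserves products, since $\aug$ commutes with the relevant fiber products.

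\textbf{Essential surjectivity, and the main obstacle.} Let $G$ be a Zink group with tangent space $M$ countably generated projective. The obstacle is to show that the underlying set functor of $G$ is isomorphic to $N\mapsto N\otimes_R M$. Zink's structure theory \cite{Zin84} gives this: an exact functor commuting with direct sums has an underlying pointed set functor isomorphic to $N \mapsto N\otimes_R T_G$, via a choice of ``coordinates'', namely the exponential/Cartier splitting. This splitting uses formal smoothness, which follows from exactness applied to square-zero surjections.

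If that citation is unavailable, I would instead argue as in Theorem \ref{theorem: formal alepho-Lie varieties}(ii). I would write $\Ahat_M$ as a countable colimit of the $\Spec(R\oplus N_\alpha)$. Then I would lift tautological tangent elements through the surjective inverse system
\[ G(N_\alpha)\times_{G(N_\alpha/N_\alpha^2)}\{\iota_\alpha\}. \]
These maps are surjective by exactness of $G$, and the system is countable, so it has an element in the inverse limit. This produces a map $\aug^*\Ahat_M \to G$ that is an isomorphism on tangent spaces. It is then an isomorphism on all of $\Nil_R$ by induction on nilpotence order, using exactness and the tangent space identification $G(I) = I\otimes M$ for square-zero $I$. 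That identification is where commuting with direct sums enters, to pass from free modules to $I$.

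Finally, I would transport the group law from $G$ to $\Ahat_M$ by full faithfulness. This uses the products-to-products compatibility established in the full faithfulness step. The result is an $X$ with $\aug^*X\cong G$.
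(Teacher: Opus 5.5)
Your proposal is correct and follows the paper's proof. Full faithfulness comes from presenting the pointed functor $X$ as a filtered colimit of $\Spec(R\oplus N_\alpha)$, so that $\aug^*X$ is ind-corepresented on $\Nil_R$ and Yoneda applies; essential surjectivity comes from Zink's structure result identifying the underlying set-valued functor of $G$ with $N\mapsto N\otimes_R M$, after which the group law is transported by full faithfulness.

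Your fallback for that step is a valid self-contained substitute for the citation: it reruns the countable lifting argument of Theorem~\ref{theorem: formal alepho-Lie varieties}(ii) directly for $G$ on $\Nil_R$. Note that there the comparison map is only a map of set-valued functors, so the induction must use the torsor structure on fibers, via $N\times_{N/I}N\cong N\times I$ when $NI=0$, rather than a five lemma. Your check that $\aug^*X$ is actually a Zink group fills in a point the paper leaves implicit.
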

\begin{proof}
    First, we show $X \mapsto \aug^*X$ is fully faithful.
    It suffices to show that $(X,e) \mapsto \aug^*X$ is a fully faithful functor from ind-infinitesimal ind-schemes to $\Fun(\Nil_R,\Set)$.
    This follows since $(X,e)$ as a pointed functor is a colimit of functors represented by augmented nilpotent algebras.
    
    Now suppose that $Y$ is a Zink commutative formal group over $R$ whose tangent space $M$ is countably generated projective. By \cite[Theorem 2.31]{Zin84}, $Y$ is prorepresentable, and indeed the proof shows that the underlying set of $Y$ is of the form $\aug^*X$ where $X = \Ahat_M$ is the based $\alepho$-polydisk.
    Since $\aug^*$ is fully faithful on ind-infinitesimal ind-schemes, the group structure on $Y$ transfers to $X$.
\end{proof}

We recall Zink's version of the main theorems of $p$-typical Cartier theory.
If $R$ is a $\Z_{(p)}$-algebra, 
let $\E_R$ be the Cartier ring, as defined in §\ref{section: intro}.
\begin{definition}\label{def: V-reduced}
    An $\E_R$-module $M$ is \emph{$V$-reduced} if 
    \begin{itemize}
        \item $V: M \to M$ is injective, and 
        \item $M \to \ilim_n M/V^nM$ is an isomorphism.
    \end{itemize}
\end{definition}

\begin{theorem}[\cite{Zin84}, Theorem 4.23]\label{theorem: Zink's cartier equivalence}
    Let $R$ be a $\Z_{(p)}$-algebra. The functor 
    $\Hom(\What,-)$
    is an equivalence from Zink commutative formal groups 
    to $V$-reduced $\E_R$-modules such that $M/VM$ is a flat $R$-module.
\end{theorem}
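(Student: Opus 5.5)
The statement is quoted from \cite[Theorem 4.23]{Zin84}, so in the paper we would simply invoke it. If we had to reprove it, we would follow Cartier's strategy in Zink's functorial language. The plan is to build an explicit inverse functor and check the unit and counit using the $V$-adic filtrations on both sides.

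The first step is to identify $\Hom(\What,G)$ for a Zink commutative formal group $G$ with the module of $p$-typical curves. Concretely, $\What(N)$ is the module of finite Witt vectors with nilpotent entries. A homomorphism $\What \to G$ is then determined by the image $\gamma$ of the tautological element $[X]$, viewed in $\ilim_n G(X R[X]/X^n)$. We would check that $\gamma$ is $p$-typical, meaning it is killed by the Frobenius-type operators $F_\ell$ for primes $\ell \ne p$. Conversely, every $p$-typical curve should extend uniquely to a homomorphism $\What \to G$, using that $G$ commutes with direct sums and is exact. This gives the usual action of $V$, $F$ and $[a]$, and hence of $\E_R=\End(\What)^{\op}$, on $M=\Hom(\What,G)$.

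Next we would show that $M$ is $V$-reduced and that $M/VM \cong \mathrm{Lie}(G)$. Here $\mathrm{Lie}(G)$ is flat because $G$ is exact and commutes with direct sums, which forces $G(N)=N\otimes_R \mathrm{Lie}(G)$ on square-zero $N$. Injectivity of $V$ and $V$-adic completeness come from the fact that $V^nM$ consists of the curves vanishing to order $p^n$. Completeness in particular follows from compatibility of $G$ with the inverse limit over the truncations $X R[X]/X^{n}$.

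For the inverse functor, given a $V$-reduced $M$ with $M/VM$ flat, we would set
\[
G_M(N) = \What(N) \otimes_{\E_R} M,
\]
using the right $\E_R$-module structure on $\What(N)$. We then need to show three things: that $G_M$ is exact, that it commutes with direct sums, and that $\Hom(\What, G_M) \cong M$. The approach is to filter by $V^nM$. Completeness, together with nilpotence of $N$ (so only finitely many steps matter), reduces everything to the graded pieces $M/VM$. For these the relevant tensor product is $N \otimes_R M/VM$, twisted by Frobenius powers, and it is exact because $M/VM$ is flat. For the counit $G_{\Hom(\What,G)} \to G$, we would check it is an isomorphism on tangent spaces and then use the same deformation argument as in Theorem~\ref{theorem: formal alepho-Lie varieties}, which applies since both sides are exact.

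The main obstacle is the exactness of $N \mapsto \What(N)\otimes_{\E_R} M$. It requires a Tor-vanishing statement for $\What(N)$ over $\E_R$ against $V$-reduced modules. We would try to prove it by writing down an explicit resolution of $M$ by free $\E_R$-modules of the form $\E_R \otimes_R P$, following Cartier's standard presentation $0 \to \E_R\otimes L \to \E_R \otimes L \to M \to 0$ with $L$ a lift of $M/VM$. If $L$ cannot be taken projective, we would replace it by flat modules and argue by Lazard's theorem.
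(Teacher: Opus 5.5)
Your proposal agrees with the paper: the paper gives no proof of this statement and imports it directly from \cite[Theorem 4.23]{Zin84}, exactly as you propose. Your reproof sketch is optional, but if you pursue it, note three points: filtering by $V^nM$ does not work as stated, because $V^nM$ is not an $\E_R$-submodule once $p\neq 0$ in $R$ (since $FV^n=pV^{n-1}$); structure equations plus Lazard's theorem do not directly handle flat non-projective $M/VM$, because $V$-adic completion does not commute with filtered colimits; and a cleaner route to the needed $\Tor$-vanishing is d\'evissage on $N$ down to the case $N^2=0$, where $\What(N)\cong N\otimes_R \E_R/V\E_R$ as right $\E_R$-modules, so that injectivity of $V$ on $M$ and $R$-flatness of $M/VM$ give $\Tor_i^{\E_R}(\What(N),M)\cong \Tor_i^R(N,M/VM)=0$ for $i>0$.
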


Write $M(-) = \Hom(\What,-)$ for the equivalence of Theorem \ref{theorem: Zink's cartier equivalence}; we call this the \textit{Cartier functor.} Under this equivalence, the tangent space of a Zink commutative formal group $K$ is given by $M(K)/VM(K)$ (\cite[Lemma 4.18]{Zin84}).

\begin{theorem}\label{theorem: cartier in countable type}
    Let $R$ be a $\Z_{(p)}$-algebra. The functor 
    \[ M(-) = \Hom(\What, -): \aCFLG(R) \to \E_R\modc\]
    is fully faithful. Its essential image consists of 
    $V$-reduced $\E_R$-modules such that $M/VM$ is a countably generated projective $R$-module.
\end{theorem}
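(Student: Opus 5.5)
The plan is to deduce Theorem \ref{theorem: cartier in countable type} by composing two equivalences already available: Proposition \ref{prop: comparison to Zink FLG} and Zink's Theorem \ref{theorem: Zink's cartier equivalence}. Concretely, $\aug^*$ identifies $\aCFLG(R)$ with the full subcategory of Zink commutative formal groups whose tangent space at the identity is a countably generated projective $R$-module. Zink's functor $\Hom(\What,-)$ is then an equivalence from all Zink commutative formal groups onto $V$-reduced $\E_R$-modules with $M/VM$ flat. Restricting an equivalence to a full subcategory gives an equivalence onto the corresponding full subcategory of the target.

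The first step is to check that the functor in the statement really is the composite. For $E\in\aCFLG(R)$, I would identify $\Hom(\What,E)$, computed as maps of group-valued functors on $\CAlg_R$, with $\Hom(\aug^*\What,\aug^*E)$. This needs $\What$ to be a commutative formal $\alepho$-Lie group. Its underlying functor is $A\mapsto\{(a_0,a_1,\dots)\in \Nil(A)^{\N} : a_i = 0 \text{ for almost all } i\}$, which is $\Ahat_M$ with $M=\bigoplus_{\N}R$. By Theorem \ref{theorem: formal alepho-Lie varieties}, this is a based $\alepho$-polydisk, and it is commutative. Full faithfulness of $\aug^*$ (Proposition \ref{prop: comparison to Zink FLG}) then identifies the two Hom groups. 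The identification is compatible with the right $\End(\What)$-action, hence $\E_R$-linear. Since $\aug^*\What$ is Zink's $\What$, the functor in the statement agrees with Zink's functor precomposed with $\aug^*$. Full faithfulness of the statement follows immediately.

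The second step is to identify the essential image. By \cite[Lemma 4.18]{Zin84}, the tangent space of a Zink formal group $K$ is $M(K)/VM(K)$. So a $V$-reduced module $M$ with $M/VM$ flat corresponds to a Zink group with countably generated projective tangent space exactly when $M/VM$ is countably generated projective. A countably generated projective module is automatically flat, so the flatness hypothesis is subsumed. The essential image is therefore precisely the $V$-reduced $\E_R$-modules with $M/VM$ countably generated projective, as claimed.

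The step I expect to need the most care is the first one: making sure that Hom computed on all $R$-algebras (the fpqc or functor-of-points Hom) agrees with Hom of functors on $\Nil_R$. The key point is that a homomorphism $\What\to E$ of group functors on $\CAlg_R$ is determined by its values on augmented nilpotent algebras $R\oplus N$. This holds because $\What$ is a filtered colimit of $\Spec(R\oplus N_\alpha)$ (Proposition \ref{prop: ind-infinitesimal conditions}) and a map out of a colimit is determined on the pieces. Conversely, any natural transformation on $\Nil_R$ extends uniquely by the same colimit description. Group compatibility is checked on products, which are again of the same form.
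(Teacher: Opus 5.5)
Your proposal is correct and follows the paper's proof: compose the equivalence $\aug^*$ of Proposition~\ref{prop: comparison to Zink FLG} with Zink's Theorem~\ref{theorem: Zink's cartier equivalence}, and use that the tangent space corresponds to $M/VM$. You also check that $\Hom(\What,-)$ computed on $\CAlg_R$ agrees with Zink's Hom on $\Nil_R$, because $\What \cong \Ahat_{\bigoplus_{\N} R}$ is ind-infinitesimal; the paper leaves this implicit, and your argument for it is sound.
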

\begin{proof}
    By Proposition \ref{prop: comparison to Zink FLG}, $\aCFLG(R)$ is equivalent to those Zink formal groups whose tangent space is countably generated projective. Under Theorem \ref{theorem: Zink's cartier equivalence}, these correspond to $V$-reduced modules $M$ such that $M/VM$ is countably generated projective.
\end{proof}

When $M$ is $V$-reduced and $M/VM$ is a finite free $R$-module, there is a particularly nice resolution of $M$ called the \emph{structure equations}.

\begin{proposition}[\cite{Zin84}, Theorem 4.39]
\label{prop: structure equations}
    Suppose that $\alpha_{i,n,j} \in W(R)$ for $1\leq i,j\leq d$ and $n \geq 0$. Then the map
        \[ \varphi: \E_R^{d} \to \E_R^d,\]
        \[ \varphi(e_i) = Fe_i - \sum_{j = 1}^d\sum_{n \geq 0} V^n \alpha_{i,n,j} e_j\]
        is injective, $M=\coker(\varphi)$ is $V$-reduced, and $M/VM$ is a free $R$-module of rank $d$.
        
    Further, suppose that $M$ is a $V$-reduced $\E_R$-module and $M/VM$ is a free $R$-module of rank $d$.
        Then there exists a short exact sequence 
        \[ 0 \to \E_R^d \overset{\varphi}{\to} \E_R^d \to M \to 0\]
        where $\varphi$ has the form 
        \[ \varphi(e_i) = Fe_i - \sum_{j=1}^d \sum_{n\geq 0} V^n [c_{i,n,j}] e_j\]
        for $c_{i,n,j} \in R$.
\end{proposition}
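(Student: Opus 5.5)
The plan is to reduce everything to a \emph{normal form} for elements of $\E_R^d$ and of $V$-reduced modules, using the $V$-adic filtration and completeness of $\E_R$.

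\textbf{Step 1: $\coker(\varphi)$ via a normal form.} I would first show that every $x \in \E_R^d$ can be written uniquely as
\[ x = \varphi(y) + \sum_{j=1}^d \sum_{n\geq 0} V^n [c_{n,j}] e_j , \qquad y \in \E_R^d,\ c_{n,j}\in R. \]
Existence is a reduction algorithm in two parts.
\begin{itemize}
\item Write each coordinate of $x$ as $\sum V^n[r_{nm}]F^m$. A term $V^n[r]F^m e_i$ with $m\geq 1$ can be rewritten as $V^n[r]F^{m-1}\varphi(e_i)$ plus $\sum V^n[r]F^{m-1}V^{n'}\alpha_{i,n',j}e_j$.
\item In the second summand the $F$-degree has dropped by one. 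The products $F^{m-1}V^{n'}$ are moved into the normal form $\sum V^a[\cdot]F^b$ using $FV=p$, $Fa=\sigma(a)F$ and $VaF=v(a)$, and then expanded via Teichmüller expansions of Witt vectors.
\end{itemize}
For a fixed $V$-level only finitely many $F$-powers occur, so the process terminates modulo each $V^k\E_R$. Completeness of $\E_R$ in the $V$-adic filtration then lets one pass to the limit.

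Uniqueness, and with it injectivity of $\varphi$, I would prove on the associated graded for the $V$-filtration. Modulo $V$ (as left modules), $\E_R/V\E_R$ is the free $R$-module on the elements $F^m$. Modulo $V$, $\varphi$ sends $y=\sum_m [r_m]F^m e_i$ to an element whose top $F$-degree term is $[r_m^{p}]F^{m+1}e_i$, up to terms of lower $F$-degree. Hence $\varphi(y)\equiv 0$ forces the top coefficient to vanish, and by descending induction all coefficients vanish. Here $r^p$ appears because $F[r]=[r^p]F$. The terms with $V^n$ for $n \geq 1$ push into higher filtration and are handled by induction on the $V$-level and then completeness.

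From the unique normal form, $M=\coker\varphi$ is identified as a set with $\prod_{n\geq 0}R^d$ via $\sum_n V^n[c_{n,j}]e_j$. Under this identification $V$ acts by the shift. This gives that $V$ is injective, that $M\cong \varprojlim M/V^nM$, and that $M/VM\cong R^d$ is free on the images of the $e_j$.

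\textbf{Step 2: the converse.} Let $M$ be $V$-reduced with $M/VM$ free of rank $d$, and lift a basis to $m_1,\dots,m_d\in M$. I would show that every element of $M$ is uniquely of the form $\sum_{j,n}V^n[c_{n,j}]m_j$. Existence follows by successive approximation modulo $V^kM$, using that $V^kM/V^{k+1}M\cong M/VM$ via $V^k$ (injectivity of $V$), together with completeness. Uniqueness follows from $V$-injectivity and freeness of $M/VM$.

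Applying this to $Fm_i$ produces the $c_{i,n,j}$ and the corresponding $\varphi$. The map $\E_R^d\to M$, $e_i\mapsto m_i$, kills $\varphi(\E_R^d)$, so it induces $\coker\varphi\to M$. By Step 1 both sides are $V$-reduced with $V$-graded pieces $R^d$, and the map is an isomorphism on $M/VM$. Hence, by the five lemma, it is an isomorphism on each $M/V^kM$, and therefore an isomorphism by completeness. Injectivity of $\varphi$ comes from Step 1, which yields the short exact sequence.

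\textbf{Main obstacle.} The hardest step is the bookkeeping in Step 1: verifying that the commutation relations ($FV=p$, $VaF=v(a)$, with $p$ not simply $VF$ in general) really make the rewriting converge $V$-adically. The same relations must also yield a clean associated graded on which the leading-term injectivity argument works. I would first try the argument over an $\Fp$-algebra, or for the ring with the explicit Teichmüller normal form, and then handle general $\Z_{(p)}$-algebras by tracking filtration degrees.
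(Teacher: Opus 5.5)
The paper gives no argument of its own here. Both halves are quoted from Zink's Theorem 4.39, with the converse taken from its last sentence and the preceding text. You instead give a self-contained proof along the classical normal-form lines, and the architecture is sound:
\begin{enumerate}
\item analyze $\varphi$ on the $V$-adic associated graded;
\item lift by successive approximation to the unique representation $x=\varphi(y)+\sum_{n,j}V^n[c_{n,j}]e_j$;
\item read off that $V$ acts as the shift on $\coker\varphi\cong\prod_n R^d$;
\item for the converse, expand $Fm_i$ in a lifted basis and compare $\coker\varphi\to M$ on $V$-graded pieces.
\end{enumerate}
Compared with the citation, your route is independent of Zink's machinery, at the cost of some bookkeeping.

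\textbf{A computational error you must fix.} As written, it breaks the injectivity step in exactly the case the paper cares about. Since $\varphi$ is left $\E_R$-linear, it is right multiplication by the matrix of the $\varphi(e_i)$, so
\[ \varphi([r]F^me_i)=[r]F^m\varphi(e_i)=[r]F^{m+1}e_i-\sum_{n,j}[r]F^mV^n\alpha_{i,n,j}e_j . \]
No $F$ is moved past $[r]$, so the leading coefficient is $r$, not $r^p$. With $r^p$, your descending induction would only give $r_m^p=0$. That does not force $r_m=0$ over a non-reduced ring, which is precisely the setting of the paper (compare the $\alpha_p$ example).

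With the correct coefficient the argument goes through. Take $m$ to be the maximal $F$-degree over all coordinates of $y$, and write $\alpha_{i,n,j}=\sum_k V^k[a_k]F^k$. Each cross term $[r]F^mV^{n+k}[a_k]F^k$ then behaves as follows:
\begin{enumerate}
\item if $n+k>m$, it lies in $V\E_R$;
\item otherwise it equals $p^{n+k}[r a_k^{p^{m-n-k}}]F^{m-n}$, which has $F$-degree at most $m$ modulo $V\E_R$.
\end{enumerate}
So the triangularity argument works.

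\textbf{Your ``main obstacle'' largely disappears.} Record explicitly that $V^k\E_R^d$ is a right (not left) submodule. It is therefore preserved by $\varphi$, and $\varphi(Vz)=V\varphi(z)$ with $V$ injective. Hence $V^k$ identifies $\mathrm{gr}_k\varphi$ with $\mathrm{gr}_0\varphi$.

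The triangular description of $\mathrm{gr}_0\varphi$ on $\E_R^d/V\E_R^d\cong\bigoplus_m R^dF^m$ is: injective, with image complementary to the $F^0$-part. This gives existence of the normal form as well as uniqueness, so the rewriting algorithm is unnecessary.

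The only relation used is
\[ F^mV^s=p^{\min(m,s)}F^{m-\min(m,s)}V^{s-\min(m,s)} \]
with $p$ central. So there is no need to treat $\F_p$-algebras first.
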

\begin{proof}
    The first claim is \cite[Theorem 4.39]{Zin84}, while the second claim follows from the last statement of \cite[Theorem 4.39]{Zin84} and the text immediately preceding it. 
\end{proof}

\begin{proposition}\label{prop: ext1 from What}
    Suppose that $K$ is a commutative formal $\alepho$-Lie group over a $\Z_{(p)}$-algebra $R$.
    Then 
    \[\Ext^1_{\fpqc}(\What,K) = 0.\]
\end{proposition}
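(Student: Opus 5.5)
Given an extension $0 \to K \to E \to \What \to 0$ of abelian fpqc sheaves, the plan is to show that it splits. First, $\What$ is itself a commutative formal $\alepho$-Lie group: its tangent space is $\bigoplus_{n\ge 0} R$, which is countably generated and free. Theorem \ref{theorem: alepho-Lie closed under extensions} then shows that $E$ is a commutative formal $\alepho$-Lie group. The extension therefore lives entirely inside $\aCFLG(R)$. Applying the Cartier functor $M(-)=\Hom(\What,-)$ of Theorem \ref{theorem: cartier in countable type}, it is enough to split the induced sequence of $\E_R$-modules. If we can produce a homomorphism $s:\What\to E$ lifting the identity of $\What$, the extension splits as fpqc sheaves and its class in $\Ext^1_{\fpqc}$ is zero.

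Step one is to show that
\[0\to M(K)\to M(E)\to M(\What)\to 0\]
is exact. Left exactness is formal, since $\Hom(\What,-)$ is left exact on sheaves. For surjectivity I would use Zink's description via Proposition \ref{prop: comparison to Zink FLG}. The sequence $0\to\aug^*K\to\aug^*E\to\aug^*\What\to 0$ is exact on $\Nil_R$: a section of $E\to\What$ as pointed functors exists by the proof of Theorem \ref{theorem: alepho-Lie closed under extensions}, and formal smoothness gives surjectivity on each $R\oplus N$. Zink's Cartier module functor is computed by curves, $M(K)=\ilim_n \aug^*K(\text{truncated }tR[t])$, restricted to $p$-typical curves. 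These groups are exact in short exact sequences of Zink formal groups, because Zink's equivalence (Theorem \ref{theorem: Zink's cartier equivalence}) is an exact equivalence onto $V$-reduced modules with flat $M/VM$. I would invoke Zink's statement that the Cartier functor is exact on his category, which he proves by this same curve description.

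Step two is to split the module sequence. We have $M(\What)=\E_R$, the free module of rank one, because $\E_R=\End(\What)^{\op}$. Any surjection onto a free module splits, so choose a preimage $m\in M(E)$ of $1\in\E_R$ and define the section $\E_R\to M(E)$ by $x\mapsto xm$. By full faithfulness in Theorem \ref{theorem: cartier in countable type}, this section corresponds to a group homomorphism $s:\What\to E$ whose composite with $E\to\What$ is the identity. This is just the Yoneda statement that $m$ is itself an element of $\Hom(\What,E)$ mapping to $\mathrm{id}$. So the extension splits.

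I expect step one, surjectivity of $\Hom(\What,E)\to\Hom(\What,\What)$, to be the main obstacle; once it holds, the rest is formal. If quoting Zink's exactness is insufficient, I would argue directly with curves. A homomorphism from $\What$ is determined by the image of the universal $p$-typical curve. Using the set-theoretic section, lift the identity curve to a curve $\gamma$ in $E$. Then correct $\gamma$ by a curve in $K$ so that it becomes $p$-typical, by applying the Cartier projector $\varepsilon_p$. The projector is available because $R$ is a $\Z_{(p)}$-algebra, and it is compatible with $E\to\What$.
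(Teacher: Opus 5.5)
Your proposal is correct and follows essentially the same route as the paper: you show $E$ is a commutative formal $\alepho$-Lie group, prove that $\Hom(\What,E)\to\Hom(\What,\What)$ is surjective, and then lift the identity using that $M(\What)=\E_R$ is free. The paper proves the surjectivity by exactly your fallback argument, with no appeal to Zink's exactness theorem: $p$-typical curves form a direct summand of based curves, and based curves lift along $E\to\What$ because that map is a relative based $\alepho$-polydisk, hence formally smooth.
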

\begin{proof}
    Suppose that 
    \begin{equation}\label{eq: extension of What}
        0 \to K \to E \to \What \to 0
    \end{equation} 
    is an extension of fpqc abelian sheaves.     
    Applying the Cartier functor $M(-) = \Hom(\What,-)$ to \eqref{eq: extension of What}, we obtain a left-exact sequence 
    \[ 0 \to M(K) \to M(E) \to M(\What).\]
    We claim that the map $M(E) \to M(\What)$ is surjective.
    For $\Hom(\What,-)$ represents the functor of $p$-typical curves, which is a summand of the functor of based curves $\Maps(\widehat{\mathbb{A}}^1,-)$.
    By Theorem \ref{theorem: alepho-Lie closed under extensions}, $E$ is an $\alepho$-Lie group.
    From the proof of Theorem \ref{theorem: alepho-Lie closed under extensions}, $E \to \What$ is in fact a relative based $\alepho$-polydisk, hence is formally smooth. Thus, any based morphism $\widehat{\mathbb{A}}^1 \to \What$ lifts to $E$.
    
    Hence we have a short exact sequence 
    \[ 0 \to M(K) \to M(E) \to M(\What) \to 0.\]
    But $M(\What) = \E_R$ is a projective $\E_R$-module,
    so there is a section of this sequence. 
    By definition of $M$, this means there is a section of $E \to \What$.
    Thus $\Ext^1_{\fpqc}(\What,K) = 0$.
\end{proof}

\begin{corollary}\label{corollary: M is exact on FLGs}
    Suppose that 
    \[ 0 \to K' \to K \to K''\to 0\]
    is a sequence of commutative formal $\alepho$-Lie groups over $R$ which is exact in the fpqc topology.
    Then 
    \[0 \to M(K') \to M(K) \to M(K'') \to 0\]
    is exact.
\end{corollary}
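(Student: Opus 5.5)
Apply the derived functor $R\Hom_{\fpqc}(\What,-)$ to the given short exact sequence of fpqc abelian sheaves, which produces the long exact sequence
\[ 0 \to \Hom(\What,K') \to \Hom(\What,K) \to \Hom(\What,K'') \to \Ext^1_{\fpqc}(\What,K') \to \cdots. \]
In the notation of Theorem \ref{theorem: cartier in countable type} the first three terms are $M(K')$, $M(K)$ and $M(K'')$. By Proposition \ref{prop: ext1 from What}, applied to the commutative formal $\alepho$-Lie group $K'$, the group $\Ext^1_{\fpqc}(\What,K')$ vanishes. The sequence therefore truncates to the short exact sequence
\[ 0 \to M(K') \to M(K) \to M(K'') \to 0. \]

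Left exactness is automatic, because $\Hom(\What,-)$ is a left exact functor on abelian sheaves. All of the content sits in the surjectivity $M(K)\to M(K'')$, and that surjectivity is exactly what the vanishing of $\Ext^1$ provides.

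One point needs checking: the identification of the connecting map uses $\Ext^1$ in the category of fpqc abelian sheaves, and this is the same group that appears in Proposition \ref{prop: ext1 from What}. That proposition was proved by showing that every extension splits, that is, by the Yoneda description of $\Ext^1$. This agrees with the derived-functor $\Ext^1$ in any abelian category with enough injectives, and fpqc abelian sheaves have enough injectives up to the usual set-theoretic care.

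An alternative route avoids derived functors. Given $f\in M(K'')$, pull the extension $K$ back along $f\colon \What\to K''$ to obtain an extension of $\What$ by $K'$. By Proposition \ref{prop: ext1 from What} this extension splits, and the splitting gives a lift of $f$ to $K$. This route also explains why the sequence is $\E_R$-linear: all maps are induced by post-composition, so they commute with the action of $\E_R=\End(\What)^{\op}$.
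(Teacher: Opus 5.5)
Your argument is correct and matches the paper's proof: the corollary is deduced immediately from the vanishing $\Ext^1_{\fpqc}(\What,K')=0$ of Proposition \ref{prop: ext1 from What}, via the long exact sequence for $\Hom(\What,-)$. Your alternative pullback argument is the same idea phrased through Yoneda extensions, and it has the small advantage of avoiding any comparison between Yoneda and derived-functor $\Ext^1$.
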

\begin{proof}
    By Proposition \ref{prop: ext1 from What}, $\Ext^1_{\fpqc}(\What, K') = 0$.
\end{proof}

Corollary \ref{corollary: M is exact on FLGs} shows that an extension of formal $\alepho$-Lie groups defines an extension of their Cartier modules.

\begin{lemma}\label{lemma: cartier functor on ext1}
    Let $R$ be a $\Z_{(p)}$-algebra and $K$ and $L$ be commutative formal $\alepho$-Lie groups over $R$.
    Then sending an extension of abelian fpqc sheaves
    \[ 0 \to L \to E \to K \to 0\]
    to 
    \[ 0 \to M(L) \to M(E) \to M(K) \to 0\]
    defines an isomorphism 
    \[ \tilde M: \Ext^1_{\fpqc}(K,L) \overset{\cong}{\to} \Ext^1_{\E_R}(M(K),M(L)).\]
\end{lemma}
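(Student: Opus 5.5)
We first check that $\tilde M$ is well defined. By Theorem \ref{theorem: alepho-Lie closed under extensions}, the middle term $E$ of any extension of abelian fpqc sheaves $0 \to L \to E \to K \to 0$ is a commutative formal $\alepho$-Lie group. Corollary \ref{corollary: M is exact on FLGs} then shows that $0 \to M(L) \to M(E) \to M(K) \to 0$ is exact. Isomorphic extensions go to isomorphic extensions, and the construction is compatible with the Yoneda description of $\Ext^1$, so $\tilde M$ is a well-defined map of sets. It is additive because the Baer sum is built from a fiber product and a pushout, and $M$ preserves both here. For the fiber product, $M = \Hom(\What,-)$ commutes with limits. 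For the pushout, write it as a cokernel of the antidiagonal $L \to E_1\times_K E_2$; this is a short exact sequence of formal $\alepho$-Lie groups, so Corollary \ref{corollary: M is exact on FLGs} applies. It therefore suffices to prove that $\tilde M$ is a bijection.

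For injectivity, suppose the image sequence $0\to M(L)\to M(E)\to M(K)\to 0$ splits in $\E_R\modc$. A section $s: M(K) \to M(E)$ is a morphism of $\E_R$-modules between Cartier modules of formal $\alepho$-Lie groups. By full faithfulness in Theorem \ref{theorem: cartier in countable type}, $s = M(\sigma)$ for a unique homomorphism $\sigma: K \to E$. Since $M(\pi\circ\sigma) = \id$, faithfulness gives $\pi\circ\sigma = \id_K$, so the original extension splits. A trivial kernel suffices because $\tilde M$ is a group homomorphism.

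For surjectivity, take an extension $0 \to M(L) \to N \to M(K) \to 0$ of $\E_R$-modules. We show that $N$ lies in the essential image of Theorem \ref{theorem: cartier in countable type}, i.e.\ that $N$ is $V$-reduced and $N/VN$ is countably generated projective. Injectivity of $V$ on $N$ follows from the snake lemma, since $V$ is injective on both outer terms. The snake lemma also gives an exact sequence $0 \to M(L)/V \to N/V \to M(K)/V \to 0$ (exactness on the left uses injectivity of $V$ on $M(K)$). This sequence splits because $M(K)/V$ is projective, so $N/VN$ is countably generated projective. For $V$-adic completeness, injectivity of $V$ on $M(K)$ shows the sequences $0 \to M(L)/V^n \to N/V^n \to M(K)/V^n \to 0$ are exact. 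These systems have surjective transitions, so $\ilim$ is exact, and the five lemma gives $N \cong \ilim N/V^nN$. Hence $N = M(E)$ for some $E \in \aCFLG(R)$. By full faithfulness, the maps become $L \to E \to K$ with composite zero. Exactness of $0 \to L \to E \to K \to 0$ as fpqc sheaves is the main obstacle. Our plan is to pass through Zink's equivalence (Theorem \ref{theorem: Zink's cartier equivalence}), where exactness can be checked on $\Nil_R$. On Zink formal groups the inverse equivalence is $N \mapsto (\What\otimes_{\E_R} N)(-)$, i.e.\ $\hat W(\mathcal N)\otimes_{\E_R}N$. The sequence of $\E_R$-modules is $\Tor$-exact because $M(K)$ has a resolution by $V$-reduced, $V$-flat modules of tor-dimension $\le 1$ as in Zink. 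So for each nilpotent $\mathcal N$ we get $0\to L(\mathcal N)\to E(\mathcal N)\to K(\mathcal N)\to 0$. Finally, $E \to K$ is a formally smooth map of ind-infinitesimal ind-schemes that is surjective on augmented nilpotent points. Lifting along nilpotent thickenings then gives surjectivity fpqc-locally, and kernels are computed pointwise, so the sequence is fpqc exact. Its image under $\tilde M$ is the given extension.
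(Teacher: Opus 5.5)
Up to producing $E$ with $M(E)\cong N$, your argument is the paper's. Well-definedness comes from Theorem \ref{theorem: alepho-Lie closed under extensions} and Corollary \ref{corollary: M is exact on FLGs}, injectivity from full faithfulness in Theorem \ref{theorem: cartier in countable type}, and the snake-lemma/Mittag-Leffler check shows that $N$ is $V$-reduced with $N/VN$ countably generated projective. Your additivity check makes explicit something the paper's kernel computation uses tacitly.

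The gap is in the last step, exactness of $0\to L\to E\to K\to 0$. The paper gets this by citing exactness of the inverse Cartier functor \cite[Theorem 4.41]{Zin84}. Your substitute rests on $\Tor_1^{\E_R}(\hat W(\mathcal N),M(K))=0$, and the reason you give does not prove it. Even a length-one free resolution $0\to P_1\to P_0\to M(K)\to 0$ only kills $\Tor_{\geq 2}$, whereas $\Tor_1^{\E_R}(\hat W(\mathcal N),M(K))=\ker\bigl(\hat W(\mathcal N)\otimes_{\E_R}P_1\to\hat W(\mathcal N)\otimes_{\E_R}P_0\bigr)$. For a presentation by structure equations, vanishing of this kernel says that the corresponding homomorphism of Witt formal groups is injective on $\mathcal N$-points. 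That is a genuine input from Zink. It is not contained in Proposition \ref{prop: structure equations}, which gives injectivity on $\E_R^d$, nor in Lemma \ref{lemma: tor vanishing}, which concerns the right module $\E_S$ rather than $\hat W(\mathcal N)$. You also use the description $K(\mathcal N)\cong\hat W(\mathcal N)\otimes_{\E_R}M(K)$ of the inverse functor without justification.

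The simplest repair is to cite \cite[Theorem 4.41]{Zin84}, as the paper does. You can also avoid Tor entirely:
\begin{enumerate}
\item Set $T_E=M(E)/VM(E)$ and $T_K=M(K)/VM(K)$. The map $T_E\to T_K$ is the tangent map of $E\to K$, and it is a surjection onto a projective module.
\item A linear section $T_K\to T_E$ induces a pointed map $\Ahat_{T_K}\to\Ahat_{T_E}\cong E$. Its composite with $E\to K\cong\Ahat_{T_K}$ is the identity on tangent spaces, so it is an isomorphism by the deformation argument in Theorem \ref{theorem: formal alepho-Lie varieties}. Hence $E\to K$ has a pointed set-theoretic section and is surjective on all points.
\item By the snake lemma, the pointwise kernel $G'$ of $E\to K$ on $\Nil_R$ is exact and commutes with direct sums, so it is a Zink formal group.
\item Left exactness of $\Hom(\What,-)$ gives $M(G')=\ker\bigl(M(E)\to M(K)\bigr)=M(L)$. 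Theorem \ref{theorem: Zink's cartier equivalence} then shows that $L\to G'$ is an isomorphism.
\end{enumerate}
The passage from $\Nil_R$ to all $R$-algebras then goes as you indicate: write $X(A)\cong\Nil(A)\otimes_R T_X$ as a filtered colimit over finitely generated, hence nilpotent, subalgebras of $\Nil(A)$. No fpqc localization is needed.
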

\begin{proof}
    By Theorem \ref{theorem: alepho-Lie closed under extensions}, $E$ is a formal $\alepho$-Lie group. By Corollary \ref{corollary: M is exact on FLGs}, $0 \to M(L) \to M(E) \to M(K) \to 0$ is exact, so $\tilde M$ is well-defined.
    
    First, we show $\tilde M$ is injective. Suppose 
    \[ 0 \to M(L) \to M(E) \to M(K) \to 0\]
    is split. By Cartier theory (Theorem \ref{theorem: cartier in countable type}), the Cartier functor $M(-)$ is fully faithful on commutative formal $\alepho$-Lie groups, 
    so if $f: M(K) \to M(E)$ is a splitting of $M(q): M(E) \to M(K)$, then $f = M(\psi)$ where $\psi$ is a splitting of $q$. Thus $\tilde M$ is injective.
    
    Now we show $\tilde M$ is surjective. We first claim that if 
    \begin{equation} \label{eq: ext1 of Cartier modules}
        0 \to M(L) \to M \to M(K) \to 0
    \end{equation}
    is an extension of $\E_R$-modules, then $M$ is the Cartier module of a formal $\alepho$-Lie group. 
    The Snake Lemma shows that $V: M \to M$ is injective
    and that there are short exact sequences 
    \[ 0 \to M(L)/V^nM(L) \to M/V^nM \to M(K)/V^nM(K)\to 0\]
    for all $n$.
    Since the inverse system $\{M(L)/V^nM(L)\}_{n \geq 1}$ satisfies the Mittag-Leffler condition, passing to inverse limits gives a short exact sequence
    \[ 0 \to M(L) \to \ilim_n M/V^nM \to M(K) \to 0.\]
    Comparing this to \eqref{eq: ext1 of Cartier modules} shows $M \to \ilim_n M/V^nM$ is an isomorphism. Thus $M$ is $V$-reduced.
    The short exact sequence 
    \[ 0 \to M(L)/VM(L) \to M/VM \to M(K)/VM(K) \to 0\]
    shows that $M/VM$ is a countably generated projective $R$-module, and thus $M$ is the Cartier module of a formal $\alepho$-Lie group.
   
    If $M = M(E)$, then by full faithfulness of the Cartier functor $M(-)$ we obtain maps $L \to E$ and $E \to K$ inducing \eqref{eq: ext1 of Cartier modules}.
    The inverse of the Cartier equivalence is exact by \cite[Theorem 4.41]{Zin84}, so $0 \to L \to E \to K \to 0$ is exact. 
\end{proof}

\subsection{Isogenies of Formal Lie Groups}

In this subsection we define isogenies of formal Lie groups and recall their Cartier-theoretic characterization. Our definition is formulated geometrically: an isogeny is an fpqc epimorphism with finite locally free kernel. In his work on Cartier theory \cite{Zin84}, Zink uses a different definition, requiring equality of dimensions and a nilpotence condition on the kernel. We first show that the two definitions agree, so that Zink’s results on isogenies may be applied in our setting. We then explain how isogenies are detected after passing from Cartier modules to their $V$-divided Cartier modules, and prove a version of this criterion formulated in the derived category of $\E_R$-modules $D(\E_R)$.

\begin{definition}
Let $R$ be a ring.  A homomorphism $f: K \to L$ of formal Lie groups over $R$ is an \textit{isogeny} if it is surjective as a morphism of fpqc sheaves and if $\ker(f)$ is representable by a finite locally free $R$-group scheme.
\end{definition}

\begin{remark}\label{remark: fppf vs fpqc}
    If $f: K \to L$ is a morphism of formal Lie groups over $R$ with finite locally free kernel, then $f$ is an fppf epimorphism if and only if $f$ is an fpqc epimorphism. For if $T \to L$ is any point, $K \times_L T$ is an fpqc $\ker(f)$-torsor. Since $\ker(f)$ is finite locally free, in particular of finite presentation,  $K \times_L T \to T$ is of finite presentation also. Thus for the isogenies and resolutions considered in our paper, there is no difference between surjectivity in fppf and fpqc topology.
\end{remark}

Zink \cite{Zin84} proves many useful results on formal Lie groups and isogenies, but in \cite{Zin84}, isogenies are defined to be morphisms of formal groups of the same dimension with kernel represented by an augmented nilpotent $R$-algebra, i.e. an augmented $R$-algebra with nilpotent augmentation ideal.  To apply the results of Zink in our setting, we begin by proving the equivalence of Zink's definition with ours.  Let us temporarily refer to Zink's notion of isogeny as a \textit{Zink isogeny} in what follows. It is likely that these results are known to the experts, but we provide full proofs, with references to the literature when appropriate.

Before proving the equivalence, we establish two preliminary lemmas. First, we show that Zink isogenies are finite locally free morphisms, which implies that their kernels are finite locally free group schemes. We then show that isogenous formal Lie groups have the same dimension, which is part of the definition of Zink isogenies.

\begin{lemma} \label{lemma: Zink implies projective}
If $f: K \to L$ is a Zink isogeny over $R$, then $\ca{O}(K)$ is a finite locally free $\ca{O}(L)$-module and the morphism $\ca{O}(L) \to \ca{O}(K)$ is faithfully flat.
\end{lemma}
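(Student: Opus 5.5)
The plan is to reduce to the case where both formal groups have global coordinates, prove finiteness by a complete Nakayama argument, and then prove flatness (the main obstacle). The statement is Zariski-local on $\Spec R$, and a formal Lie group has finite locally free tangent space. So after localizing I may assume $\ca{O}(K) = R[[x_1,\dots,x_d]]$ and $\ca{O}(L) = R[[y_1,\dots,y_d]]$, using that a Zink isogeny preserves dimension. Then $f^*: R[[y]] \to R[[x]]$ is a continuous map of augmented algebras.

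\textbf{Finiteness.} The kernel $G$ is represented by $A = R[[x]]/(f^*y_1,\dots,f^*y_d)$. By Zink's definition the augmentation ideal of $A$ is nilpotent, so there is an $N$ with $(x)^N \subseteq (f^*y)R[[x]]$. Consequently the $(x)$-adic and $(y)$-adic topologies on $R[[x]]$ coincide, and $R[[x]]$ is $(y)$-adically complete as an $R[[y]]$-module. Moreover $A$ is generated as an $R$-module by the monomials $x^a$ with $|a| < N$. By the complete Nakayama lemma, these finitely many monomials generate $R[[x]]$ as an $R[[y]]$-module. So $\ca{O}(K)$ is finite over $\ca{O}(L)$, and $\ca{O}(K)/(y) = A$ is a finite $R$-module.

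\textbf{Flatness (main obstacle).} Over a field $k = \kappa(\mathfrak p)$, both $k[[x]]$ and $k[[y]]$ are regular local of dimension $d$, and the first is finite over the second. Miracle flatness then makes $k[[x]]$ finite free over $k[[y]]$, of rank $r(\mathfrak p) = \dim_k A \otimes_R k$. To globalize over non-noetherian $R$ I would proceed in three steps.
\begin{enumerate}
\item Near a given prime, choose monomials $x^{a_1},\dots,x^{a_r}$ lifting a $k$-basis of $A\otimes k$. By Nakayama over $R$ applied to the finite module $A$, followed by complete Nakayama as above, after shrinking $\Spec R$ these give a surjection $\phi: R[[y]]^r \to R[[x]]$.
\item To show $\phi$ is injective, I would use the group structure rather than a noetherian approximation, since power series have infinitely many coefficients and noetherian approximation is awkward. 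Because $f$ is surjective with kernel $G$, we have $K \times_L K \cong K \times G$, hence an isomorphism of completed tensor products $\ca{O}(K)\widehat\otimes_{\ca{O}(L)}\ca{O}(K) \cong \ca{O}(K)\widehat\otimes_R A$. Since $\ca{O}(K)$ is finite over $\ca{O}(L)$, the completed tensor product on the left is just the ordinary tensor product. Since $A$ is a finite $R$-module, the right side is finite over $\ca{O}(K)$.
\item The plan is then to show that $A$ is locally free over $R$ of rank $r$, by comparing ranks at all fibers via the field case, and then to descend freeness along $\ca{O}(L)\to\ca{O}(K)$. On the right-hand side, the pulled-back map $\phi\otimes \ca{O}(K)$ becomes a surjection between modules that are locally free of the same rank $r$ over $\ca{O}(K)$. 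Such a surjection is an isomorphism, and this would give injectivity of $\phi$ after base change along $\ca{O}(L) \to \ca{O}(K)$.
\end{enumerate}

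The delicate points are two: proving that $A$ itself is $R$-flat, for which I would first try the fiberwise rank-constancy argument in the style of EGA IV 11.3.10, and arranging that $\ca{O}(L)\to\ca{O}(K)$ is injective enough to descend injectivity of $\phi$. For the second point I would first try to show that $f^*$ is injective, by comparing the two sides modulo each power $(y)^n$ and using the field case.

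Once $\ca{O}(K)$ is finite free over $\ca{O}(L)$ of rank $r \geq 1$, faithful flatness follows. Indeed, a finite free module of positive rank is faithfully flat, and $r\geq1$ holds because $A \neq 0$, as it has the augmentation to $R$.
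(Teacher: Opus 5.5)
Your finiteness argument is fine, with one omission. A priori $\ker f$ is represented by $R[[x]]/\overline{(f^*y)}$, the quotient by the $(x)$-adic closure. So nilpotence only gives $(x)^N\subseteq\overline{(f^*y)}$, and you need the paper's Nakayama step to get $(x)^N\subseteq(f^*y)$.

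The genuine gap is the flatness half. You defer two points as ``delicate'': $R$-flatness of $A$, and injectivity of $f^*$ (needed to descend injectivity of $\phi$). These are the substance of the lemma, and the tools you propose cannot supply them.
\begin{itemize}
\item Comparing fiber ranks with the field case detects local freeness of a finite module only over a \emph{reduced} base. Over $R=k[\epsilon]/(\epsilon^2)$, the module $R/\epsilon R$ has constant fiber rank $1$ and is not flat, and the lemma must hold over arbitrary, in particular non-reduced, bases.
\item For the same reason, checking injectivity of $R[y]/(y)^n\to R[[x]]/(f^*y)^n$ on fibers gives nothing unless you already know the target is $R$-flat.
\item The fiberwise flatness criterion of EGA IV 11.3.10 does not help either. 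It requires finite presentation, and it takes some flatness over the base as an input, e.g.\ of $R[[x]]$, a product of copies of $R$ that need not be $R$-flat when $R$ is not coherent.
\end{itemize}
So step (iii) rests on the unproved claim that $A$ is locally free of rank $r$. As written, the proposal only establishes that $\mathcal{O}(K)$ is finite over $\mathcal{O}(L)$.

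The paper does not prove flatness by hand. After checking that $(f^*y)$ is closed, it invokes Zink's Weierstrass preparation theorem [Zin84, Theorem 5.8], which gives finite freeness of $R[[x]]$ over $R[[y]]$ directly from nilpotence of $R[[x]]/(f^*y)$.

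If you want a self-contained proof, the noetherian approximation you wanted to avoid is in fact painless here:
\begin{enumerate}
\item Since $(x)^N\subseteq(f^*y)$, Nakayama shows $(f^*y)$ is generated by the truncations of the $f^*y_i$ to degree $\le N$. Hence $A$ and every $R[[x]]/(f^*y)^n$ are base changed from a finitely generated $\mathbb{Z}$-algebra $R_0$.
\item Over $R_0$ the truncations form a regular sequence in each closed fiber $k[[x]]$. The local flatness criterion then gives flatness of the model $A_0$ and $\mathrm{gr}\cong A_0[t_1,\dots,t_d]$, and this base changes to $R$.
\item Lifting a local basis of $A$ then yields $R[[y]]^r\cong R[[x]]$ by completeness, which also gives injectivity of $f^*$.
\end{enumerate}

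Two smaller points in step (ii). First, $K\times_L K\cong K\times G$ needs only $\ker f=G$, not surjectivity of $f$; for a Zink isogeny, the paper deduces surjectivity from this very lemma. Second, a finite $R[[x]]$-module need not be $(x)$-adically separated for non-noetherian $R$, so ``completed equals ordinary tensor product'' is unjustified. This is harmless, since the completion of a finite module is a quotient of it, which is all your surjectivity argument uses.
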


\begin{proof}
Replacing $\Spec R$ by a Zariski open affine subset on which $K$ and $L$ admit coordinates, we may assume that $\ca{O}(K) = R[[x_1, \dots, x_d]]$ and $\ca{O}(L) = R[[y_1, \dots, y_d]]$.  Let $f$ map $y_i$ to $f_i \in \ca{O}(K)$, and set $I = (f_1, \dots, f_d)$.  Then $\ker(f)$ is represented by the spectrum of
\be
\ca{O}(K) \widehat{\ten}_{\ca{O}(L)} R = (\ca{O}(K)/I)^{\wedge}_{\mf{m}_K} = \ca{O}(K)/\ol{I},
\ee
where $\ol{I}$ is the closure of $I$ in $\ca{O}(K)$.  We claim that $I$ is in fact a closed ideal in $\ca{O}(K)$.  By assumption, $\ca{O}(K)/\ol{I}$ is a nilpotent $R$-algebra, so there is an $N>0$ such that $\mf{m}_K^N \subset \ol{I}$.  We will show that in fact $\mf{m}_K^N \subset I$; it follows from this that $I$ is an open ideal of $\ca{O}(K)$, and we then obtain the claim since open subgroups of topological groups are closed.

Let $M$ be the image of $\mf{m}_K^N$ in the quotient $\ca{O}(K)/I$. Since $\mf{m}_K^N \subset \ol{I}$, we have $\mf{m}_K^N \subset I + \mf{m}_K^{N+1}$, so $M \subset \mf{m}_K M$. By Nakayama's lemma we conclude $M = 0$, so that $\mf{m}_K^N \subset I$, as desired.

It follows that $\ker(f)$ is represented by the spectrum of $\ca{O}(K)/I$.  By nilpotence, we can apply the Weierstrass preparation theorem \cite[Theorem 5.8]{Zin84} to conclude that $f_*\ca{O}(K)$ is a finite locally free $\ca{O}(L)$-module and $\ca{O}(L) \to \ca{O}(K)$ is faithfully flat.
\end{proof}

\begin{lemma} \label{lemma: isogeny implies same dimension}
Let $R$ be a ring, and $f: K \to L$ an isogeny of formal Lie groups over $R$.  Then $\dim(K) = \dim(L)$. 
\end{lemma}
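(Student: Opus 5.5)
We must show that if $f\colon K\to L$ is an isogeny, then $\dim K=\dim L$. Dimension is the rank of the tangent space $\Lie(K)=T_eK(R)$, a finite locally free $R$-module. Rank is computed locally, so we may assume $R$ is local. Then $\dim K=\dim L$ can be checked after base change to the residue field $k$ of $R$, and we may even pass to an algebraic closure. Base change is harmless for all the hypotheses. Formal Lie groups pull back to formal Lie groups. The kernel $\ker(f)$ remains finite locally free. An fpqc epimorphism remains an fpqc epimorphism after base change, since surjectivity of sheaves is local and preserved by pullback. So we reduce to $R=k$, a field.

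Over the field $k$, choose coordinates $\mathcal O(K)=k[[x_1,\dots,x_d]]$ and $\mathcal O(L)=k[[y_1,\dots,y_e]]$, and let $f^*\colon k[[y]]\to k[[x]]$ be the induced map. The kernel is represented by the complete local ring $k[[x]]\widehat\otimes_{k[[y]]}k=k[[x]]/\overline{(f^*y_1,\dots,f^*y_e)}$. This ring is finite-dimensional over $k$ because $\ker f$ is finite, so the closure $\overline{I}$ of $I=(f^*y_1,\dots,f^*y_e)$ is $\mathfrak m_K$-primary. The Nakayama argument from the proof of Lemma \ref{lemma: Zink implies projective} then shows that $I$ itself contains $\mathfrak m_K^N$ for some $N$. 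So $e$ elements generate an $\mathfrak m$-primary ideal in a $d$-dimensional regular local ring, and Krull's height theorem gives $e\ge d$.

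For the reverse inequality $d\ge e$ we use surjectivity. Since $k[[x]]/I$ is finite over $k$, the map $f^*$ makes $k[[x]]$ a finite $k[[y]]$-module by the complete version of Nakayama's lemma. Therefore $\dim k[[x]]=\dim\bigl(k[[y]]/\ker f^*\bigr)$. It now suffices to show that $f^*$ is injective, since then $d=e$. Suppose $g\in\ker f^*$ is nonzero. Then $f$ factors through the closed formal subscheme $V(g)\subsetneq L$. We want this to contradict fpqc surjectivity. For a finite-type point $T=\Spec(k[[y]]/\mathfrak m_L^n)\to L$, the canonical point lifts fpqc-locally to $K$, so the map $k[[y]]/\mathfrak m_L^n\to B$ is injective for some faithfully flat $B$. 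Consequently the map $k[[y]]/\mathfrak m_L^n\to k[[x]]/\mathfrak m_K^m$ is injective for suitable $m$, because the lift factors through some infinitesimal neighborhood of $K$. Choosing $n$ with $g\notin\mathfrak m_L^n$, we get $f^*g\neq0$ modulo $\mathfrak m_K^m$, which is a contradiction.

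The main obstacle is exactly this last step. We must convert fpqc surjectivity of the ind-scheme map into injectivity of $f^*$, and this requires care with how fpqc-local lifts of infinitesimal points factor through a finite stage $\Spec(k[[x]]/\mathfrak m_K^m)$ of $K$. The ind-infinitesimal condition (Proposition \ref{prop: ind-infinitesimal conditions}) should guarantee this: a point of $K$ over $B$ lands in $\Spec(k\oplus N)$ for a finite nilpotent $N$. If this route is awkward, an alternative is to note that $K\times_L T\to T$ is a $\ker f$-torsor and hence finite flat and surjective over $T$. Then $\dim_k$ of its coordinate ring equals $|\ker f|\cdot\dim_k(k[[y]]/\mathfrak m_L^n)$. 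Comparing the growth in $n$, where the left side is bounded by a polynomial of degree $d$ and the right side grows like $n^e$, yields $e\le d$ directly.
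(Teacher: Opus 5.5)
Your proof is correct, but it is organized differently from the paper's. After the same reduction to a field, the paper uses both hypotheses in a single length count. It sets $L_r=\Spec \mathcal O(L)/I^r$ and $K_r=L_r\times_L K$. Fpqc surjectivity makes $K_r\to L_r$ a $\ker(f)$-torsor, so $\mathcal O(K_r)=\mathcal O(K)/J^r$, with $J=\mathcal O(K)I$ an $\mathfrak m_K$-primary ideal, has $k$-dimension exactly $|\ker f|\binom{r+\dim L-1}{\dim L}$. The paper then reads off $\dim K=\dim L$ from the degree of the Hilbert--Samuel polynomial of $J$.

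You instead use the two hypotheses separately:
\begin{enumerate}
\item Finiteness of the kernel gives that $I=\mathfrak m_L\,k[[x]]$ is $\mathfrak m_K$-primary, so $k[[x]]$ is finite over $k[[y]]$ by complete Nakayama.
\item Surjectivity gives injectivity of $f^*$, because a faithfully flat ring map is injective.
\item Krull dimension is invariant under finite injective extensions, so $d=e$.
\end{enumerate}
Your route avoids the torsor/descent step and Hilbert--Samuel theory. It also gives the structural fact that $f^*$ is a finite injective map of power series rings. The paper's route is shorter because one exact count handles both hypotheses at once. Your fallback growth-comparison argument at the end is essentially the paper's proof.

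Two small remarks.

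First, the Krull height-theorem step giving $e\ge d$ is redundant. Once $f^*$ is finite and injective, $d=e$ follows directly. You do still need the Nakayama argument showing $I\supseteq\mathfrak m_K^N$, since that is what gives finiteness.

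Second, the ``main obstacle'' is not really one. If $f^*g=0$, then $g$ maps to $0$ in $B$. By injectivity of $k[[y]]/\mathfrak m_L^n\to B$, this gives $g\in\mathfrak m_L^n$ for every $n$, hence $g=0$. There is no need to factor the lift through $k[[x]]/\mathfrak m_K^m$. In fact the map ``$k[[y]]/\mathfrak m_L^n\to k[[x]]/\mathfrak m_K^m$'' you write need not be defined, since $f^*(\mathfrak m_L^n)$ need not lie in $\mathfrak m_K^m$. What is true is $\ker\bigl(k[[y]]\to k[[x]]/\mathfrak m_K^m\bigr)\subseteq\mathfrak m_L^n$, and that suffices.
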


\begin{proof}
Since the dimension of a formal Lie group is determined fiberwise and is locally constant on the base, it suffices to prove the statement after base change to every residue field.  Thus we may assume that $R = k$ is a field.

Let $I$ be the augmentation ideal of $\mathcal{O}(L)$, let $L_r = \Spec \mathcal{O}(L)/I^r$, and let $K_r = L_r \times_L K$. Since $\ker(f)$ is representable by a finite group $k$-scheme and $K_r \to L_r$ is an fpqc $\ker(f)$-torsor, $K_r$ is representable by a finite $k$-scheme. Since filtered colimits commute with finite limits in the category of sets, $\colim_r K_r = (\colim_r L_r) \times_L K = K$. The maps $K_r \to K_{r+1}$ are closed immersions. Thus $\mathcal{O}(K) = \ilim_r \mathcal{O}(K_r)$ as a topological ring. If $J=  \mathcal{O}(K)I$, then $\mathcal{O}(K_r) = \mathcal{O}(K)/J^r$, so $J$ is an ideal of definition of $\mathcal{O}(K)$, so $J$ is $\mathfrak{m}_K$-primary where $\mathfrak{m}_K$ is the maximal ideal of $\mathcal{O}(K)$. 
Now 
\[ \dim_k \mathcal{O}(L)/I^r = \binom{r + \dim(L) -1}{\dim(L)}\]
so 
\[ \dim_k \mathcal{O}(K)/J^r = |\ker(f)|\binom{r+\dim(L)-1}{\dim(L)}.\]
Since $J$ is $\mathfrak{m}_K$-primary, dimension theory implies $\dim(K) = \dim(L)$, as desired.
\end{proof}

\begin{proposition} \label{proposition: Zink isogenies and isogenies}
Let $R$ be a ring, and $f: K \to L$ a morphism of formal Lie groups over $R$.  Then $f$ is an isogeny if and only if $f$ is a Zink isogeny.
\end{proposition}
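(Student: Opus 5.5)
We prove the two implications separately, using Lemmas \ref{lemma: Zink implies projective} and \ref{lemma: isogeny implies same dimension}.

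Suppose first that $f$ is a Zink isogeny. By Lemma \ref{lemma: Zink implies projective}, Zariski-locally on $\Spec R$ (where $K$ and $L$ have coordinates) the map $\ca{O}(L) \to \ca{O}(K)$ is finite locally free and faithfully flat. The kernel is $\Spec$ of $\ca{O}(K)\otimes_{\ca{O}(L)} R$, the base change of a finite locally free algebra along the augmentation. Hence it is a finite locally free $R$-scheme. Since these properties are Zariski local, this holds globally.

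For surjectivity, take a point $y \in L(A)$. By ind-infinitesimality (Proposition \ref{prop: ind-infinitesimal conditions}), $y$ factors through $\Spec \ca{O}(L)/\mf{m}_L^r \to L$ for some $r$. The fiber product $K \times_L \Spec A$ is then $\Spec$ of $A \otimes_{\ca{O}(L)} \ca{O}(K)$, which makes sense because $\ca{O}(K)$ is a finite $\ca{O}(L)$-module, so completed and ordinary tensor products agree. This algebra is finite locally free and faithfully flat over $A$, so $y$ lifts fppf-locally. Thus $f$ is an fppf, hence fpqc, epimorphism.

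Conversely, suppose $f$ is an isogeny. Lemma \ref{lemma: isogeny implies same dimension} gives $\dim K = \dim L$. It remains to show that $\ker(f) = \Spec B$ with $B$ finite locally free and augmentation ideal nilpotent. The group $\ker(f)$ is a closed subfunctor of $K$, which is ind-infinitesimal, so its identity map factors through some $\Spec(R \oplus N)$ with $N$ nilpotent. Concretely, apply condition (i) of Proposition \ref{prop: ind-infinitesimal conditions} to the tautological point $\ker(f) \to K$ in $K(B)$. This gives a finitely generated nilpotent ideal $I \subseteq B$ such that $B \to B/I$ kills the point. We then need to conclude that the augmentation ideal $I_B$ of $B$ is nilpotent. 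The point $\ker(f)\to K$ is a monomorphism, so the composite $\Spec B/I \to \Spec B \to K$ equal to $e$ forces $\Spec(B/I) \to \Spec B$ to factor through the identity section. Hence $I_B \subseteq I$, and $I_B$ is nilpotent.

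The main obstacle is making the surjectivity argument in the first direction precise: identifying $K\times_L \Spec A$ with the spectrum of a finite flat algebra requires the finiteness from Lemma \ref{lemma: Zink implies projective}, together with care about how coordinates on $K$ and $L$ behave under non-Zariski localization of $A$. I would handle this by first reducing to the case where $A$ is an $R$-algebra over a Zariski open on which coordinates exist, using that fpqc epimorphism is local on the base.
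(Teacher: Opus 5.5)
Your proposal is correct and follows essentially the same route as the paper: the forward direction uses Lemma \ref{lemma: Zink implies projective} and the finite locally free algebra $A\otimes_{\mathcal{O}(L)}\mathcal{O}(K)$, and the converse combines Lemma \ref{lemma: isogeny implies same dimension} with the fact that a closed subgroup of the ind-infinitesimal $K$ has nilpotent augmentation ideal. For that fact you use condition (i) of Proposition \ref{prop: ind-infinitesimal conditions} together with the monomorphism $\ker(f)\to K$, whereas the paper uses the continuous surjection $\mathcal{O}(K)\to\mathcal{O}(\ker f)$.

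The one point to state precisely is why $K\times_L\Spec A=\Spec(A\otimes_{\mathcal{O}(L)}\mathcal{O}(K))$, i.e.\ why completed and ordinary tensor products agree. The reason is that $\mathfrak{m}_K^n\subseteq\mathfrak{m}_L\mathcal{O}(K)$ for some $n$. This inclusion is shown in the proof of Lemma \ref{lemma: Zink implies projective} from nilpotence of the kernel, and the paper also cites Zink's Lemma 5.5 for it. It makes every $A$-algebra map out of $A\otimes_{\mathcal{O}(L)}\mathcal{O}(K)$ automatically continuous on $\mathcal{O}(K)$, which is exactly the continuity check the paper carries out.
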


\begin{proof}
First, suppose that $f$ is a Zink isogeny. By the proof of Lemma \ref{lemma: Zink implies projective}, $\ker(f)$ is represented by the spectrum of the uncompleted tensor product $\ca{O}(K) \ten_{\ca{O}(L)} R$, which is a finite projective $R$-module by Lemma \ref{lemma: Zink implies projective}.  Hence $\ker(f)$ is a finite locally free $R$-group scheme.

It remains to show that every section of $L$ lifts fpqc-locally to a section of $K$. Let $A$ be an $R$-algebra, and $y \in L(A) = \Hom_{\cts}(\ca{O}(L), A)$. Form $A' = A \ten_{\ca{O}(L)} \ca{O}(K)$.  Then $A'$ is fpqc over $A$ by Lemma \ref{lemma: Zink implies projective}, and by construction the map $y|_{A'}: \ca{O}(L) \to A \overset{\id \ten 1}\to A'$ factors as the composition of $f^*: \ca{O}(L) \to \ca{O}(K)$ and $x = 1 \ten \id: \ca{O}(K) \to A'$. It only remains to verify that $x \in K(A')$, i.e. that the map $x: \ca{O}(K) \to A'$ is continuous. Since $f$ is an isogeny, we have that $\mf{m}_K^n \subset \mf{m}_L \ca{O}(K)$ for some $n$ by \cite[Lemma 5.5]{Zin84}. Applying $x$, we find that $x(\mf{m}_K^n) \subset y(\mf{m}_L) A'$.  Since $y$ is continuous, there is an $N > 0$ such that $y(\mf{m}_L)^NA' = 0$; it follows that $x(\mf{m}_K^n)^{N} = 0$, so $x(\mf{m}_K)$ is nilpotent in $A'$ and the map $x: \ca{O}(K) \to A'$ is continuous, as desired.

Conversely, suppose that $f$ is an isogeny, and let $G = \ker(f)$. The augmentation ideal of $\ca{O}(K)$ is topologically nilpotent. Since there is a continuous surjection $\ca{O}(K) \to \ca{O}(G)$, the augmentation ideal of $\mathcal{O}(G)$ is nilpotent. 
Moreover $K$ and $L$ have the same dimension by Lemma \ref{lemma: isogeny implies same dimension}, so $f$ is a Zink isogeny.
\end{proof}

As a consequence, we have the following result, which one may also prove directly using the fact that isogenies are torsors under finite locally free group schemes and fpqc descent of finite locally free morphisms \cite[\href{https://stacks.math.columbia.edu/tag/02VO}{02VO}]{stacks-project}.

\begin{lemma} \label{lemma: isogeny flf coordinate rings}
If $f: K \to L$ is an isogeny, then $\ca{O}(K)$ is a finite locally free $\ca{O}(L)$-module.
\end{lemma}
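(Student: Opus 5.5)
The quickest route runs through the equivalence just proved in Proposition \ref{proposition: Zink isogenies and isogenies}. Let $f: K \to L$ be an isogeny. By that proposition, $f$ is a Zink isogeny, and Lemma \ref{lemma: Zink implies projective} then says that $\mathcal{O}(K)$ is a finite locally free $\mathcal{O}(L)$-module. The remaining work is to check that the Zariski localization used in the proof of Lemma \ref{lemma: Zink implies projective} is harmless.

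That proof first passes to a Zariski open cover $\Spec R = \bigcup \Spec R_{g_i}$ on which $K$ and $L$ have coordinates. So I need to show that finite local freeness of $\mathcal{O}(K)_{R_{g_i}}$ over $\mathcal{O}(L)_{R_{g_i}}$ for every $i$ implies it over $R$. Here $\mathcal{O}(K)_{R_{g_i}}$ means the completed base change, namely $\mathcal{O}(K_{R_{g_i}})$.

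I would handle this with the torsor description rather than completed tensor products.
\begin{itemize}
\item[(a)] Set $G = \ker f$. Then $K \to L$ is an fpqc $G$-torsor, so for each truncation $L_r = \Spec \mathcal{O}(L)/I^r$ the pullback $K_r = K \times_L L_r$ is a $G$-torsor over $L_r$. This is exactly as in Lemma \ref{lemma: isogeny implies same dimension}.
\item[(b)] A torsor under a finite locally free group scheme is representable by a finite locally free morphism. To see this, trivialize it fpqc-locally, where it becomes $G \times L_r$, and apply fpqc descent of finite locally free morphisms \cite[\href{https://stacks.math.columbia.edu/tag/02VO}{02VO}]{stacks-project}. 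Consequently $\mathcal{O}(K_r)$ is finite locally free over $\mathcal{O}(L)/I^r$, of rank $|G|$, and these modules are compatible under base change in $r$.
\item[(c)] Take the limit: $\mathcal{O}(K) = \ilim_r \mathcal{O}(K_r)$, again as in Lemma \ref{lemma: isogeny implies same dimension}.
\end{itemize}

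The main obstacle is step (c): passing from the compatible system of finite projective $\mathcal{O}(L)/I^r$-modules to a finite projective $\mathcal{O}(L)$-module. Here I would invoke the standard fact that over an $I$-adically complete ring $\mathcal{O}(L)$, an inverse system $P_r$ of finite projective $\mathcal{O}(L)/I^r$-modules with $P_{r+1}/I^r \cong P_r$ has a limit $P = \ilim P_r$ that is finite projective with $P/I^r P = P_r$. The proof of that fact goes as follows.
\begin{itemize}
\item[(1)] Lift an idempotent matrix presenting $P_1$ successively to an idempotent over $\mathcal{O}(L)$; idempotents lift uniquely up to conjugacy along complete ideals.
\item[(2)] This produces a finite projective $P$ with $P/IP \cong P_1$.
\item[(3)] Projectivity of $P$ and of each $P_r$ lets one lift that isomorphism compatibly through all $r$.
\end{itemize}
One must also make sure that $\mathcal{O}(L)$ is $I$-adically complete in the required sense, where $I$ is the augmentation ideal. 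This is part of what it means for $L$ to be a formal Lie group, given that $\mathcal{O}(L)$ is locally $R[[y_1,\dots,y_d]]$.

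Alternatively, if one prefers to stay with the cited route, the Zariski gluing can be done directly. Finite local freeness is Zariski-local on $\Spec \mathcal{O}(L)$, and the localizations $R \to R_{g_i}$ induce maps $\mathcal{O}(L) \to \mathcal{O}(L_{R_{g_i}})$. These maps are flat (completed localizations of a Noetherian-like power series situation) and jointly surjective on the relevant spectra. Again the only subtle point is the interaction between completion and localization, so the truncation argument above is the one I would write down.
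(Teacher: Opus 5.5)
Your first paragraph is exactly the paper's proof: the paper obtains the lemma in one sentence by combining Proposition~\ref{proposition: Zink isogenies and isogenies} with Lemma~\ref{lemma: Zink implies projective}. It does not pause over the Zariski reduction to coordinates inside the latter, so your worry there is reasonable.

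What you then propose to write down, steps (a)--(c) plus the lifting lemma, is a genuinely different and self-contained proof. It never uses the comparison with Zink isogenies or Weierstrass preparation. It is the ``direct'' argument the paper only alludes to in the sentence preceding the lemma (torsors under finite locally free group schemes plus fpqc descent), with the passage to the limit made explicit. It is correct:
\begin{enumerate}
\item $K_r=K\times_L L_r$ is a $G$-torsor over the affine scheme $L_r$. It becomes $G\times T$ over a trivializing fpqc cover $T\to L_r$, not over $L_r$ itself as you wrote.
\item Effective descent for affine morphisms, together with fpqc-locality of finite local freeness, then makes $\mathcal{O}(K_r)$ finite locally free over $\mathcal{O}(L)/I^r$, with $\mathcal{O}(K_{r+1})/I^r\mathcal{O}(K_{r+1})\cong\mathcal{O}(K_r)$.
\item $\mathcal{O}(K)=\varprojlim_r\mathcal{O}(K_r)$, because $K=\operatorname{colim}_r K_r$ already as presheaves.
\item Your idempotent-lifting and Nakayama argument does upgrade such a compatible system to a finite projective $\mathcal{O}(L)$-module.
\end{enumerate}
The completeness you need holds globally, not only on a cover: $\mathcal{O}(L)\cong\prod_{n\ge 0}\mathrm{Sym}^n(\omega_L)$ with $\omega_L$ finite projective, and $I^r=\prod_{n\ge r}\mathrm{Sym}^n(\omega_L)$.

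The trade-off is this. The paper's route is two lines but leans on Zink's theory and on a tacit Zariski gluing. Yours uses only descent and a standard lifting lemma, and it sidesteps localization entirely by working with the global complete ring $\mathcal{O}(L)$.

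Drop your final paragraph. Flatness of $\mathcal{O}(L)\to\mathcal{O}(L_{R_g})$ is not clear for non-noetherian $R$, and nothing in your argument needs it.
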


\begin{proof}
This follows from Lemma \ref{lemma: Zink implies projective} and Proposition \ref{proposition: Zink isogenies and isogenies}.
\end{proof}

We now give an exposition of the Cartier-theoretic description of isogenies of formal Lie groups, following \cite{Zin84}. Throughout, let $R$ be a $p$-nilpotent ring and $\ol{R} = R/pR$.  We use an overline to denote the reduction to $\ol{R}$: of a map, formal Lie group, Cartier module, etc. 

Isogenies of formal Lie groups over $\ol{R}$ become isomorphisms after formally inverting Frobenius: a morphism $f: K \to L$ of formal Lie groups of the same dimension is an isogeny if and only if there is an $m \geq 0$ and a morphism $g: L \to K^{(p^m)}$ such that $g \circ f = F^m: K \to K^{(p^m)}$ \cite[Theorem 5.25]{Zin84}.  This leads to the definition of the \textit{$V$-divided Cartier module}.

Recall that if $N$ is a left $\E_{\ol{R}}$-module, then the Frobenius twist of $N$ is $N^{(p)} := \E_{\ol{R}} \ten_{\sigma, \E_{\ol{R}}} N$, where $\sigma: \E_{\ol{R}} \to \E_{\ol{R}}$ extends the Witt vector Frobenius via $\sigma(F) = F, \sigma(V) = V$.  There is a Frobenius map $F_{N}: N \to N^{(p)}$, defined by $F_{N}(n) = V \ten n$.  When $N = M(K)$ is the Cartier module of a commutative formal Lie group $K/{\ol{R}}$, then $N^{(p)} = M(K^{(p)})$ and $F_{N}$ is the morphism induced by the Frobenius map $K \to K^{(p)}$.  

\begin{definition}
Let $N$ be a left $\E_{\ol{R}}$-module. The $V$-divided Cartier module of $N$ is the left $\E_{\ol{R}}$-module
\be
\Div(N) := \colim N^{(p^i)}
\ee
where the colimit is taken along the Frobenius maps $F_{N^{(p^i)}}: N^{(p^i)} \to N^{(p^{i+1})}$.
\end{definition}
The functor $\Div$ is the Cartier-theoretic analogue of inverting the Frobenius, and consequently provides a natural framework for studying formal Lie groups up to isogeny. The following result is implicit in \cite[Chapter 5]{Zin84}, which explicitly handles the case of $\F_p$-algebras.

\begin{proposition}\label{proposition: isogeny and div}
Let $R$ be a $p$-nilpotent ring and $f: K \to L$ a morphism of commutative formal Lie groups over $R$. Then $f$ is an isogeny if and only if the induced map $\Div(M(\ol{f})): \Div(M(\ol{K})) \to \Div(M(\ol{L}))$ is an isomorphism.
\end{proposition}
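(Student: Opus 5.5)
We first reduce to characteristic $p$. Isogeny is a property that can be checked after reduction modulo the nilpotent ideal $pR$. The kernel condition transfers because $\ker(f)$ is finite locally free if and only if $\mathcal{O}(K)$ is finite locally free over $\mathcal{O}(L)$, by Lemma \ref{lemma: isogeny flf coordinate rings} and the converse given by Weierstrass preparation. In terms of Zink isogenies (Proposition \ref{proposition: Zink isogenies and isogenies}), we then need two things:
\begin{enumerate}
\item $\dim K=\dim L$, which is a statement about the rank of locally free tangent modules and can be checked on $\ol{R}$;
\item the augmentation ideal of the kernel is nilpotent.
\end{enumerate}
For the second point, take coordinates locally as in Lemma \ref{lemma: Zink implies projective}. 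The claim $\mathfrak{m}_K^N\subset I=(f_1,\dots,f_d)$ follows from $\mathfrak{m}_K^N\subset I+p\,\mathcal{O}(K)$. Since $p$ is nilpotent, iterating gives $\mathfrak{m}_K^{N'}\subset I$, because $p^k\mathfrak{m}_K^{\bullet}$ eventually vanishes. So $f$ is an isogeny if and only if $\ol{f}$ is. Since $\Div(M(\ol f))$ only depends on $\ol f$, it suffices to treat the case where $R=\ol{R}$ is an $\F_p$-algebra.

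Over an $\F_p$-algebra, we use Zink's criterion \cite[Theorem 5.25]{Zin84}: a morphism $f:K\to L$ of equal-dimensional formal groups is an isogeny if and only if there are $m\ge0$ and $g:L\to K^{(p^m)}$ with $g\circ f=F^m_K$.

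For the forward direction, suppose $f$ is an isogeny. Lemma \ref{lemma: isogeny implies same dimension} gives equal dimension, so such $g$ exists. By functoriality of Frobenius, $f^{(p^m)}\circ g\circ f=f^{(p^m)}\circ F^m_K=F^m_L\circ f$. Since $f$ is an fpqc epimorphism, this yields $f^{(p^m)}\circ g=F^m_L$. Applying $M$, the maps $M(g)$ supply inverses to $\Div(M(f))$ in the colimit. This is the standard argument that two maps which are mutually inverse up to Frobenius become inverse after inverting Frobenius.

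For the converse, suppose $\Div(M(f))$ is an isomorphism. The main obstacle is extracting an honest morphism $g$, and we plan to use finite generation. $M(L)$ is finitely generated over $\E_R$ locally, with generators $e_1,\dots,e_d$ from the structure equations (Proposition \ref{prop: structure equations}). Work Zariski locally so that the tangent spaces are free, and note that the twists $M(K)^{(p^i)}$ are $V$-reduced. The image of each $e_j$ under $\Div(M(f))^{-1}$ lies in some $M(K)^{(p^m)}$. Its image under $M(f)$ agrees with $F^m e_j$ in the colimit, hence already in some $M(L)^{(p^{m'})}$. Enlarging $m$, we define $M(L)\to M(K)^{(p^m)}$ on generators. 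We must check that the relations $\varphi(e_i)$ are respected. They map to zero in $\Div$, hence at a finite stage, so after enlarging $m$ once more the map is well defined.

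The composite $g\circ M(f)-F^m$ vanishes in $\Div$ on the finitely many generators of $M(K)$, so it vanishes after a further twist. By full faithfulness (Theorem \ref{theorem: cartier in countable type}), this produces $g:L\to K^{(p^m)}$ with $g\circ f=F^m$.

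Equality of dimensions remains. Since $\Div$ commutes with base change to residue fields, we reduce to a field $k$. There, $\Div(M(K))$ is a $V$-divided module whose rank over $W(k)$-type invariants, for instance the dimension of $\Div(M)/V\Div(M)$ over $k^{\mathrm{perf}}$, equals $\dim K$. Alternatively, $g\circ f=F^m$ and $f^{(p^m)}\circ g=F^m$ force both $f$ and $g$ to have finite kernels, and counting as in Lemma \ref{lemma: isogeny implies same dimension} gives $\dim K\le\dim L\le\dim K$. Zink's criterion then gives that $f$ is an isogeny.
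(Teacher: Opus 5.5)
Your proof is correct, but it takes a different route from the paper, whose proof is pure citation: \cite[Remark 5.6, Corollary 5.12]{Zin84} for ``$f$ is an isogeny iff $\overline{f}$ is'', \cite[Theorem 5.25]{Zin84} for the forward direction, and the last statement of \cite[Theorem 5.26]{Zin84} for the converse.

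You instead prove the reduction mod $p$ yourself, via Proposition~\ref{proposition: Zink isogenies and isogenies} and $\mathfrak{m}_K^{kN}\subseteq I+p^k\mathcal{O}(K)$; this is fine. You also rederive the needed part of Theorem 5.26 from the criterion of Theorem 5.25. Zariski-locally, $M(\overline{L})$ is finitely presented by structure equations and $M(\overline{K})$ is finitely generated. So $\Div(M(\overline{f}))^{-1}$ is realized at a finite stage by some $\psi\colon M(\overline{L})\to M(\overline{K})^{(p^m)}$ with $\psi\circ M(\overline{f})=F^m$ and $M(\overline{f})^{(p^m)}\circ\psi=F^m$, and full faithfulness turns $\psi$ into $g$.

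Your approach makes visible exactly where finite-dimensionality enters: it would fail for countable-type groups, whose Cartier modules need not be finitely generated. The paper's version is shorter but leaves all of this inside Zink.

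Three repairs are needed.

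\textbf{The first dimension argument fails.} The class of $n\in N^{(p^i)}$ in $\Div(N)$ equals that of $V\otimes n=V\cdot(1\otimes n)\in N^{(p^{i+1})}$. Hence $V$ is surjective on $\Div(N)$, and $\Div(N)/V\Div(N)=0$ detects nothing. Drop this version.

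\textbf{The counting step in your alternative needs replacing.} Your alternative is the right one, but the counting in Lemma~\ref{lemma: isogeny implies same dimension} uses that $K\to L$ is a torsor, which you do not yet know. Over a residue field $k$, argue instead as follows. Since $\ker f\subseteq\ker F^m_K$ is finite, $\mathfrak{m}_K^N\subseteq\overline{\mathfrak{m}_L\mathcal{O}(K)}=\mathfrak{m}_L\mathcal{O}(K)$, because ideals of $k[[x_1,\dots,x_{\dim K}]]$ are closed. So $\mathfrak{m}_L\mathcal{O}(K)$ is an $\mathfrak{m}_K$-primary ideal generated by $\dim L$ elements, and Krull's theorem gives $\dim K\le\dim L$. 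The same argument applied to $g$, using $f^{(p^m)}\circ g=F^m_L$, gives the reverse inequality.

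\textbf{The opening claim is false and should be deleted.} You assert that $\ker f$ is finite locally free iff $\mathcal{O}(K)$ is finite locally free over $\mathcal{O}(L)$. Without surjectivity this fails: take $0\to L$ with $\dim L>0$. It is not needed, since the Zink-isogeny argument that follows is what actually carries the reduction.
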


\begin{proof}
By \cite[Remark 5.6 and Corollary 5.12]{Zin84}, $f$ is an isogeny if and only if $\ol{f}: \ol{K} \to \ol{L}$ is an isogeny over $\ol{R}$.  By \cite[Theorem 5.25]{Zin84}, if $\ol{f}$ is an isogeny then $\Div(M(\ol{f}))$ is an isomorphism. Conversely, if $\Div(M(\ol{f}))$ is an isomorphism, then by the last statement of \cite[Theorem 5.26]{Zin84}, $\ol{f}$ is an isogeny.
\end{proof}

As above, let $R$ be a $p$-nilpotent ring, $\ol{R} = R/pR$, and use an overline to denote reduction from $R$ to $\ol{R}$. The isogeny criterion of Proposition \ref{proposition: isogeny and div} is formulated for Cartier modules. In later applications, our Cartier--Dieudonné functor takes values in $D(\E_R)$, so we need a version of the criterion using the left derived functor $L\Div$ of the $V$-divided Cartier module functor $\Div$. This requires the following vanishing and acyclicity statements for Cartier modules of formal Lie groups.

Given a ring homomorphism $\phi: R \to S$, let $\phi: \E_R \to \E_S$ denote the induced map, which sends $F \mapsto F, V \mapsto V$, and $[x] \mapsto [\phi(x)]$. Regard $\E_S$ as an $(\E_S, \E_R)$-bimodule with the usual left $\E_S$-action and with right $\E_R$-action $x \cdot a = x \phi(a)$.

\begin{lemma} \label{lemma: tor vanishing}
Let $M$ be a Cartier module of a formal Lie group over $R$, and let $\phi: R \to S$ be a map of $p$-nilpotent rings.  Then $\Tor_i^{\E_R}(\E_S, M) = 0$ for all $i > 0$.
\end{lemma}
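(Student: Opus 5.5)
We use the structure equations to obtain an explicit two-term free resolution, and then check that it stays injective after base change.

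\emph{Step 1: reduction to a free tangent space.} Let $M = M(K)$ for a formal Lie group $K$ over $R$. Then $M/VM$ is finite locally free over $R$. Choose elements $f_1,\dots,f_r \in R$ generating the unit ideal such that each $M/VM \otimes_R R_{f_j}$ is free. We would like to pass to the localizations $R_{f_j}$. The ring map $\E_R \to \E_{R_{f_j}}$ is flat; the reason is that $\E$ commutes with localization in the appropriate $V$-completed sense, and this can be seen on the explicit description by infinite sums. Tor vanishing can then be checked locally: the family $\E_R \to \prod_j \E_{R_{f_j}}$ is faithfully flat on the right, and $\Tor^{\E_R}_i(\E_S,M)\otimes$ localizes compatibly. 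If this formal flatness argument is messy, a cleaner alternative avoids localization entirely. Since $M/VM$ is projective, $M$ is a direct summand of a Cartier module whose tangent space is free. Concretely, take a complement $P$ with $M/VM \oplus P$ free, and add the Cartier module $M(\Ghat\otimes P)$ of the formal group associated to $P$. This is a Cartier module of a formal Lie group with tangent space $P$. Tor commutes with direct sums, so it suffices to treat $M$ with $M/VM$ free of rank $d$. I would use this summand argument.

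\emph{Step 2: resolution and base change.} By Proposition \ref{prop: structure equations}, there is a short exact sequence $0 \to \E_R^d \xrightarrow{\varphi} \E_R^d \to M \to 0$ with $\varphi(e_i) = Fe_i - \sum_{j,n} V^n[c_{i,n,j}]e_j$. This is a projective resolution of length one. Therefore $\Tor_i^{\E_R}(\E_S,M)=0$ for $i\geq 2$ automatically, and $\Tor_1$ equals the kernel of $\E_S\otimes\varphi : \E_S^d \to \E_S^d$. The base-changed map is given by the same formula with coefficients $\phi(c_{i,n,j})$, that is, $e_i \mapsto Fe_i - \sum_{j,n} V^n[\phi(c_{i,n,j})]e_j$. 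This uses that $\E_S\otimes_{\E_R}\E_R^d = \E_S^d$ and that $\phi$ sends $F\mapsto F$, $V\mapsto V$, and $[x]\mapsto[\phi(x)]$.

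\emph{Step 3: injectivity.} The base-changed map has exactly the form of the first part of Proposition \ref{prop: structure equations} over $S$, with $\alpha_{i,n,j} = [\phi(c_{i,n,j})] \in W(S)$. That proposition asserts that such a map is injective. Hence $\Tor_1 = 0$.

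The main obstacle is the reduction to the free case in Step 1: one has to ensure that the complement module $P$ yields a genuine Cartier module of a formal Lie group, so that the structure-equation resolution applies to the direct sum. Once that reduction is in place, the remaining steps follow directly from Zink's injectivity statement.
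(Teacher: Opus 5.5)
Your proposal is correct and matches the paper's proof. You reduce to the case $M/VM$ free by adding the Cartier module of the formal vector group on a complement (the paper uses $\widehat{\mathbb{V}}(Q)$), resolve $M$ by structural equations, and observe that the base-changed map $e_i \mapsto Fe_i - \sum_{j,n} V^n[\phi(c_{i,n,j})]e_j$ is again a structural equation over $S$, hence injective by Proposition~\ref{prop: structure equations}. The localization route you mention first would need real justification, since flatness of $\E_R \to \E_{R_f}$ is not obvious for the $V$-completed ring, but nothing is lost because you set it aside in favor of the summand argument.
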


\begin{proof}
Suppose first that $M/VM$ is a free $R$-module.
By Proposition \ref{prop: structure equations}, we may present $M$ by structural equations
\be
0 \to \E_R^r \overset{u}\to \E_R^r \to M \to 0,
\ee
where on a basis $e_1, \dots, e_r$ we have $u(e_i) = Fe_i - \sum_{j = 1}^r \sum_{n \geq 0} V^n[c_{i,n,j}]e_j$ for some $c_{i,n,j} \in R$.  It follows that $\Tor_i^{\E_R}(N, M) = 0$ for any right $\E_R$-module $N$ and all $i > 1$.  We also have
\be
\Tor_1^{\E_R}(\E_S, M) = \ker(\E_S \ten_{\E_R} \E_R^r \overset{1 \ten u}\to \E_S \ten_{\E_R} \E_R^r).
\ee
We have an identification of left $\E_S$-modules $\E_S \ten_{\E_R} \E_R^r \cong \E_S^r$ via $x \ten y \mapsto x \phi(y)$. Under this, the map $1 \ten u$ corresponds to the map $u^{(\phi)}: \E_S^r \to \E_S^r$, $u^{(\phi)}(e_i) = Fe_i - \sum_{j,n} V^n [\phi(c_{i,n,j})]e_j$.  This is a structural equation, so by the injectivity of structural equations (Proposition \ref{prop: structure equations}), $\Tor_1^{\E_R}(\E_S, M) = 0$.

Now let $M$ be any Cartier module of a formal Lie group over $R$.  Since $M/VM$ is a finite projective $R$-module, we may choose a finite projective $R$-module $Q$ such that $Q \oplus M/VM$ is a free $R$-module of finite rank.  Consider the additive formal vector group $\widehat{\bb{V}}(Q)$ on $Q$, i.e. the formal completion at the zero section of $\Spec(\Sym_R^\bu(Q^\vee))$. This is a finite-dimensional formal Lie group with $\Lie(\widehat{\bb{V}}(Q)) \cong Q$; it follows that the Cartier module $M_Q$ of $\widehat{\bb{V}}(Q)$ satisfies $M_Q/VM_Q \cong Q$.  If we set $M' := M \oplus M_Q$, then $M'$ is a Cartier module of a formal Lie group over $R$ such that $M'/VM'$ is free.  By what we have shown previously we conclude
\be
\Tor_i^{\E_R}(\E_S, M') = 0
\ee
for all $i > 0$.  Since $\Tor_i$ preserves direct sums, we find that $\Tor_i^{\E_R}(\E_S, M) = 0$, as desired.
\end{proof}

Note that the $V$-divided Cartier module functor $\Div$ is right exact, and therefore admits a left derived functor $L\Div: D(\E_{\ol{R}}) \to D(\E_{\ol{R}})$, given by
\be
L\Div(X) = \hocolim_{n} \left(\E_{\ol{R}}^{(p^n)} \Lotimes{\E_{\ol{R}}} X \right),
\ee
where $\E_{\ol{R}}^{(p^n)}$ is the $\E_{\ol{R}}$-bimodule whose underlying left $\E_{\ol{R}}$-module is $\E_{\ol{R}}$ and with right action given by the map $\E_{\ol{R}} \to \E_{\ol{R}}$ induced by the $n$-th iterate of the Frobenius map of $\ol{R}$. 

\begin{corollary} \label{corollary: higher div}
Let $M$ be a Cartier module of a formal Lie group over $\ol{R}$, viewed in the derived category $D(\E_{\ol{R}})$.  Then $L\Div(M) = \Div(M)$.
\end{corollary}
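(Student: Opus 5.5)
The plan is to compute $L\Div(M)$ from the formula
\[
L\Div(M) = \hocolim_n \bigl(\E_{\ol{R}}^{(p^n)} \Lotimes{\E_{\ol{R}}} M\bigr),
\]
where the homotopy colimit is taken over a sequential system. A sequential homotopy colimit in $D(\E_{\ol{R}})$ has cohomology equal to the filtered colimit of the cohomologies, since filtered colimits of modules are exact. It therefore suffices to show two things:
\begin{itemize}
\item each term $\E_{\ol{R}}^{(p^n)} \Lotimes{\E_{\ol{R}}} M$ is concentrated in degree $0$;
\item its $H^0$ is $M^{(p^n)}$, with transition maps the Frobenius maps $F_{M^{(p^n)}}$.
\end{itemize}
Once both hold, the colimit of the $H^0$'s is $\Div(M)$ by definition, and all other cohomology vanishes. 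This gives $L\Div(M) \simeq \Div(M)$.

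For the first point, apply Lemma \ref{lemma: tor vanishing} with $R = S = \ol{R}$ and $\phi = \mathrm{Frob}^n : \ol{R} \to \ol{R}$, the $n$-th power of the absolute Frobenius. This is a map of $p$-nilpotent rings. The induced ring map $\E_{\ol{R}} \to \E_{\ol{R}}$ sends $[x] \mapsto [x^{p^n}]$ and fixes $F$ and $V$, so it is $\sigma^n$. Hence the bimodule $\E_S$ of that lemma is exactly $\E_{\ol{R}}^{(p^n)}$. The lemma gives $\Tor_i^{\E_{\ol{R}}}(\E_{\ol{R}}^{(p^n)}, M) = 0$ for $i>0$, so the derived tensor product is the underived $\E_{\ol{R}} \otimes_{\sigma^n,\E_{\ol{R}}} M$.

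The remaining point, which I expect to be the only real subtlety, is bookkeeping. I need to check two identifications:
\begin{itemize}
\item $\E_{\ol{R}} \otimes_{\sigma^n} M$ agrees with the iterated twist $(\cdots(M^{(p)})^{(p)}\cdots)^{(p)} = M^{(p^n)}$;
\item the maps in the derived system agree with the Frobenius maps $V\otimes -$.
\end{itemize}
The first follows from associativity of base change, $\E\otimes_{\sigma}(\E\otimes_\sigma M)\cong \E\otimes_{\sigma^2} M$, and similarly for higher $n$. For the second, the transition maps of the derived system are induced by the bimodule maps $\E^{(p^n)}\to\E^{(p^{n+1})}$, $x\mapsto xV$. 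Since $V\sigma(a) = aV$, this map is right-linear for the twisted actions. On $H^0$ it induces $x\otimes m\mapsto xV\otimes m$, which is the Frobenius $F_{M^{(p^n)}}$. This is exactly how $L\Div$ was defined as the left derived functor of $\Div$, so the identification is consistent.

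Alternatively, I could observe that $M^{(p^n)}$ is itself the Cartier module of the formal Lie group $\ol{K}^{(p^n)}$. Either route concludes the argument.
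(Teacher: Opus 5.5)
Your proof is correct and follows the paper's argument. Both compute $H^{-i}$ of the sequential homotopy colimit as $\colim_n \Tor_i^{\E_{\ol{R}}}(\E_{\ol{R}}^{(p^n)}, M)$ and kill the higher terms with Lemma \ref{lemma: tor vanishing} applied to the $n$-th power of Frobenius. Your additional check is also correct and fills in a step the paper leaves implicit: the $H^0$ system is $M^{(p^n)}$, and the transition maps $x\otimes m\mapsto xV\otimes m$ agree with $F_{M^{(p^n)}}$ via $V\sigma(x)=xV$.
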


\begin{proof}
By definition, $L\Div(M)$ is the homotopy colimit of $\E_{\ol{R}}^{(p^n)} \Lotimes{\E_{\ol{R}}} M$.  Since filtered colimits are exact,
\be
H^{-i}(L\Div(M)) = \colim_n \Tor_i^{\E_{\ol{R}}}(\E_{\ol{R}}^{(p^n)}, M).
\ee
Lemma \ref{lemma: tor vanishing} implies that $\Tor_i^{\E_{\ol{R}}}(\E_{\ol{R}}^{(p^n)}, M) = 0$ for $i > 0$. It follows that $H^{-i}(L\Div(M)) = 0$ for $i \neq 0$, and consequently $L\Div(M) = \Div(M)$.
\end{proof}

This leads to the desired derived characterization of isogenies of formal Lie groups, which yields one of the three conditions in the description of the essential image of the Cartier--Dieudonn\'e functor.

\begin{proposition} \label{proposition: derived isogenies}
Let $f: K \to L$ be a morphism of commutative formal Lie groups over $R$, let $F$ be the fiber in $D(\E_R)$ of the map $M(K) \to M(L)$ induced by $f$, and let $\ol{F} := \E_{\ol{R}} \Lotimes{\E_R} F$. Then $f$ is an isogeny if and only if $L\Div(\ol{F}) = 0$.
\end{proposition}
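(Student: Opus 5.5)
The plan is to compute $\ol{F}$ and $L\Div(\ol{F})$ explicitly, then reduce to Proposition \ref{proposition: isogeny and div}. By definition $F = \Fib(M(K) \to M(L))$ sits in an exact triangle $F \to M(K) \to M(L) \to F[1]$ in $D(\E_R)$. The functor $\E_{\ol{R}} \Lotimes{\E_R} -$ is exact, so applying it gives a triangle $\ol{F} \to \E_{\ol{R}} \Lotimes{\E_R} M(K) \to \E_{\ol{R}} \Lotimes{\E_R} M(L)$. By Lemma \ref{lemma: tor vanishing} with $\phi: R \to \ol{R}$, the derived tensor products are concentrated in degree $0$. There they equal $\E_{\ol{R}} \otimes_{\E_R} M(K)$ and $\E_{\ol{R}} \otimes_{\E_R} M(L)$.

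The next step is to identify $\E_{\ol{R}} \otimes_{\E_R} M(K)$ with $M(\ol{K})$, compatibly with $f$. When $M(K)/VM(K)$ is free this follows from Proposition \ref{prop: structure equations}. The structure-equation presentation $\E_R^d \to \E_R^d$ of $M(K)$ base changes to the presentation $u^{(\phi)}$ over $\E_{\ol{R}}$, as in the proof of Lemma \ref{lemma: tor vanishing}. That is a structure equation, hence presents a $V$-reduced module, and it is the Cartier module of the base-changed formal group. The general case follows by adding $\widehat{\mathbb{V}}(Q)$ as in that proof, or by working Zariski locally. Naturality in $f$ comes from the base change map $M(K) \to M(\ol{K})$ induced by $K \to \ol K$: the adjoint map $\E_{\ol R}\otimes_{\E_R} M(K) \to M(\ol K)$ is natural, and it is an isomorphism by the computation above. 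Thus $\ol{F} \simeq \Fib(M(\ol{K}) \to M(\ol{L}))$, with the map being $M(\ol f)$.

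Now apply the exact functor $L\Div$. By Corollary \ref{corollary: higher div}, $L\Div(M(\ol K)) = \Div(M(\ol K))$ and similarly for $\ol L$. Hence $L\Div(\ol F) \simeq \Fib\bigl(\Div(M(\ol K)) \xrightarrow{\Div(M(\ol f))} \Div(M(\ol L))\bigr)$. A map of modules viewed in degree $0$ has vanishing fiber if and only if it is an isomorphism, since $H^0$ of the fiber is the kernel and $H^1$ is the cokernel. Proposition \ref{proposition: isogeny and div} then says this happens exactly when $f$ is an isogeny.

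The main obstacle is the base change identification $\E_{\ol R}\otimes_{\E_R} M(K) \cong M(\ol K)$. Everything else is formal. I would prove it via the structure equations and Theorem \ref{theorem: Zink's cartier equivalence}, checking that the module presented by $u^{(\phi)}$ represents $\Hom(\What_{\ol R}, \ol K)$. Zink's functorial base change for Cartier modules should supply this directly.
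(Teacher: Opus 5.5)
Your proposal is correct and follows the paper's proof. Both arguments base change the fiber triangle to $\E_{\ol R}$ and use Lemma \ref{lemma: tor vanishing} together with Zink's base-change compatibility (the paper cites \cite[Theorems 4.43 and 4.45]{Zin84}) to identify $\ol F$ with $\Fib(M(\ol K)\to M(\ol L))$, then conclude via Corollary \ref{corollary: higher div} and Proposition \ref{proposition: isogeny and div}; your remarks on naturality and the structure-equation route are harmless supplements that the citation makes unnecessary.
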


\begin{proof}
Base change to $\E_{\ol{R}}$ yields a fiber sequence
\be
\ol{F} \to \E_{\ol{R}} \Lotimes{\E_R} M(K) \to \E_{\ol{R}} \Lotimes{\E_R} M(L) \overset{+1}\to.
\ee
By Lemma \ref{lemma: tor vanishing}, we have $\E_{\ol{R}} \Lotimes{\E_R} M(K) \cong \E_{\ol{R}} \ten_{\E_R} M(K)$, so by compatibility of Cartier theory with base change \cite[Theorems 4.43 and 4.45]{Zin84} we have $\E_{\ol{R}} \ten_{\E_R} M(K) \cong M(\ol{K})$.  As the same reasoning applies to $L$, it follows that $\ol{F} \cong \Fib(M(\ol{K}) \to M(\ol{L})) \in D(\E_{\ol{R}})$. Since $L\Div$ is a triangulated functor, applying $L\Div$ yields a distinguished triangle 
\be
L\Div(\ol{F}) \to L\Div(M(\ol{K})) \to L\Div(M(\ol{L})) \to L\Div(\ol{F})[1].
\ee
By Corollary \ref{corollary: higher div}, the Cartier modules $M(\ol{K})$ and $M(\ol{L})$ are $L\Div$-acyclic.  Therefore, this triangle yields an identification of $L\Div(\ol{F})$ with the fiber $\Fib(\Div(M(\ol{K})) \to \Div(M(\ol{L})))$ in $D(\E_{\ol{R}})$, so $L\Div(\ol{F}) = 0$ if and only if $\Div(M(\ol{K})) \to \Div(M(\ol{L}))$ is an isomorphism.  By Proposition \ref{proposition: isogeny and div}, this happens if and only if $f$ is an isogeny.
\end{proof}

\section{Classification of Connected Group Schemes} \label{section: connected theorem}
Fix a $p$-nilpotent ring $R$. In this section, we begin by characterizing fiberwise-connected group schemes over $R$ in a few equivalent ways.  We then define our Cartier--Dieudonn\'e functor and prove it is an equivalence.

\subsection{Connected Group Schemes and Formal Lie Groups}
Let us recall the definition of the relevant category of group schemes from the introduction.

\begin{definition}
Let $C(R)$ denote the category of finite locally free commutative $R$-group schemes $G$ which are \textit{fiberwise connected}, i.e. such that $G_s$ is a connected $k(s)$-group scheme for all $s \in \Spec R$.
\end{definition}
Note that $G \in C(R)$ implies that the order of $G$ is, locally on $\Spec(R)$, a power of $p$.  We sometimes refer to such group schemes as ``connected" in what follows.  We begin by providing equivalent algebraic and geometric ways to view connected group schemes.  Recall that an \textit{augmented nilpotent $R$-algebra} is an augmented $R$-algebra with nilpotent augmentation ideal.

\begin{proposition} \label{proposition: connected conditions}
Suppose $G = \Spec(A)$ is a finite locally free commutative $R$-group scheme. The following are equivalent:
\ben
\item $G \in C(R)$; \label{G in C(R)}
\item $A$ is an augmented nilpotent $R$-algebra;\label{A nil aug}
\item $G$ is a closed subgroup of a commutative formal Lie group over $R$.
\label{G subgroup of Lie}
\een
\end{proposition}

\begin{proof}
\ref{G in C(R)}$\implies$\ref{A nil aug}: Suppose $G \in C(R)$. Let $I \subset A$ be the augmentation ideal, and let $s \in \Spec(R)$. Since the short exact sequence of $R$-modules $0 \to I \to A \to R \to 0$ splits, it remains exact after base change to the residue field $k(s)$ of $s$, and therefore the augmentation ideal of $G \times_R k(s)$ is $I_s := I \ten_R k(s)$.  Since finite connected group schemes over a field are spectra of Artinian local rings \cite[Tag \href{https://stacks.math.columbia.edu/tag/0BUG}{0BUG}]{stacks-project}, we have that $I_s$ is nilpotent in $A_s := A \ten_R k(s)$.  It follows that $I$ is contained in the nilradical of $A$.  Indeed, if $\mf{p} \subset A$ is a prime of $A$ above $s \in \Spec(R)$, the image of $I$ in $A_s$ is contained in every prime of $A_s$ and thus $I \subset \mf{p}$. Since $G$ is finite, $I$ is finitely generated, so $I$ is nilpotent. 

\ref{A nil aug}$\implies$\ref{G in C(R)}: Suppose that $A$ is an augmented nilpotent $R$-algebra.  Base changing to any residue field $k(s)$, the augmentation ideal becomes nilpotent and the quotient is $k(s)$, which implies that $A_s$ is an Artinian local ring with maximal ideal $I_s$, so the fiber $G_s$ is connected.

\ref{G in C(R)}$\implies$\ref{G subgroup of Lie}: By Cartier duality, we may identify $G$ with $\ul{\Hom}_R(G^\vee, \G_m)$, where $G^\vee$ is the Cartier dual of $G$. Following a construction of B\'egueri \cite[Proposition 2.2.1]{Beg80}, there is a canonical embedding of $G$ into the abelian sheaf $H := \ul{\Map}_R(G^\vee, \G_m)$ of algebraic maps from the Cartier dual $G^\vee$ of $G$ to $\G_m$.  Moreover, $H$ is canonically isomorphic to the Weil restriction $\Res_{G^\vee/R}(\G_{m, G^\vee})$ and is therefore a smooth affine $R$-group scheme \cite[Proposition 4.4]{Sch94}.  Since $G$ is the spectrum of an augmented nilpotent $R$-algebra, the closed embedding $G \to H$ factors through an infinitesimal neighborhood of the identity of $H$, and therefore through the formal completion $\widehat{H}_e$ of $H$ at the identity section, providing a closed embedding of $G$ into a commutative formal Lie group over $R$.

\ref{G subgroup of Lie}$\implies$\ref{G in C(R)}: Let $G \to K$ be a closed embedding of groups into a formal Lie group $K$. After base changing to $k(s)$, we get a closed subgroup $G_s \to K_s$ over the field $k(s)$.  On coordinate rings, this corresponds to a continuous surjection $k(s)[[x_1, \dots, x_d]] \to A_s$ which takes the augmentation ideal $(x_1, \dots, x_d)$ onto the augmentation ideal of $A_s$.  Moreover, continuity implies that some power of the augmentation ideal $(x_1, \dots, x_d)$ maps to zero in $A_s$. Hence the augmentation ideal of $A_s$ is nilpotent, so $A_s$ is Artinian local and thus $G \in C(R)$.
\end{proof}

\begin{remark} B\'egueri \cite[Proposition 2.2.1]{Beg80} considers all finite locally free commutative group schemes over arbitrary base schemes, and shows in addition that the fppf cokernel of the map $G \to \Res_{G^\vee/R}(\G_{m, G^\vee})$ may be identified with the smooth affine group scheme of symmetric 2-cocycles on $G^\vee$ with values in $\G_m$. 
\end{remark}

We now show that commutative connected group schemes over $p$-nilpotent bases admit functorial 2-term resolutions by commutative formal Lie groups (Corollary \ref{corollary: begueri for connected}).  Our result follows from the general fact that quotients of formal Lie groups by finite locally free subgroups are again formal Lie groups (Theorem \ref{theorem: quotients of FLG}). First, we establish a couple preliminary lemmas.

\begin{lemma}\label{lem: quotient sheaf is ind-scheme}
    Let $R$ be a ring, $K \to \Spec R$ a formal Lie group, $G \subset K$ a finite locally free closed normal group scheme.
    Then the fppf quotient sheaf $K/G$ is representable by an ind-infinitesimal $\alepho$-ind-scheme over $R$.
\end{lemma}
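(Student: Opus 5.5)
Write $K = \colim_n K_n$ with $K_n = \Spec(\mathcal{O}(K)/I^n)$, $I$ the augmentation ideal, so each $K_n$ is finite over $R$ (locally on $\Spec R$ we may take coordinates; the statement is Zariski local only up to gluing, so I would instead use a filtration that is global). The plan is to present $K/G$ as the colimit of the quotients $K_n\cdot G / G$. Since $G$ is connected (it is a closed subgroup of $K$, so by Proposition \ref{proposition: connected conditions} its augmentation ideal $J$ is nilpotent, say $J^m=0$), the image $G$ lands in some $K_{m}$, and multiplication $K_n \times G \to K$ lands in $K_{n+m}$ or so. Let $Y_n \subseteq K$ be the scheme-theoretic image of the action map $K_n \times G \to K$; it is a closed subscheme, finite over $R$, stable under the free $G$-action, and $K = \colim_n Y_n$ since $K_n \subseteq Y_n \subseteq K_{N(n)}$.

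The key step is to show that each quotient $Y_n/G$ is representable by a finite $R$-scheme, with $Y_n \to Y_n/G$ a $G$-torsor. For this I would use that $G$ acts freely on the affine finite $R$-scheme $Y_n$, and invoke the standard theorem that the fppf quotient of an affine scheme by a free action of a finite locally free group scheme is representable by the affine scheme $\Spec(\mathcal{O}(Y_n)^G)$, with $Y_n \to Y_n/G$ finite locally free (SGA3 V, or Mumford's construction). To apply this one needs $Y_n$ finite locally free over something, or at least that the action be free and the orbits contained in affines, which is automatic here since $Y_n$ is affine. Then $Y_n/G$ is finite over $R$ since $\mathcal{O}(Y_n)^G\subseteq \mathcal{O}(Y_n)$ is a submodule of a finite module — finiteness as a module requires care over non-noetherian $R$; I would instead deduce it from $\mathcal{O}(Y_n)$ being finite locally free over $\mathcal{O}(Y_n)^G$ and $\mathcal{O}(Y_n)^G \to \mathcal{O}(Y_n)$ faithfully flat, giving finite type and integrality.

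Next, the transition maps $Y_n/G \to Y_{n+1}/G$ are closed immersions: by descent along the torsor $Y_{n+1} \to Y_{n+1}/G$, the $G$-stable closed subscheme $Y_n$ descends to a closed subscheme whose pullback is $Y_n$, and this descended subscheme is $Y_n/G$. Since fppf sheafification and quotient commute with filtered colimits along these $G$-stable pieces, $K/G = \colim_n Y_n/G$, and the index set is countable. Ind-infinitesimality follows from condition i. of Proposition \ref{prop: ind-infinitesimal conditions}: any point of $K/G$ lifts fppf locally to $K$, where it reduces to $e$ modulo a finitely generated nilpotent ideal; one then descends this, or more simply observes that each $\mathcal{O}(Y_n/G) \subseteq \mathcal{O}(Y_n)$ has nilpotent augmentation ideal since $\mathcal{O}(Y_n)$ does, giving condition iii.

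The main obstacle I expect is the representability of $Y_n/G$ over a general (non-noetherian) base together with control of the scheme-theoretic image $Y_n$: it is not obvious that $Y_n$ is flat or finitely presented, and the quotient theorem for free actions needs the action to be free, which holds since $G \subseteq K$ acts by translation, and holds on $Y_n$ as a subscheme. If the quotient by a possibly non-flat $Y_n$ is problematic, I would first reduce, via a noetherian approximation of $R$, $K$ and $G$ (all finitely presented data at each level), to $R$ noetherian, or alternatively replace $Y_n$ by $K_n$-thickenings defined by $G$-invariant ideals $I^{(n)} = $ the ideal generated by norms, which are manifestly stable.
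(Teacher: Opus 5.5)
Your argument is correct, but you build the $G$-stable exhaustion of $K$ differently from the paper.

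The paper's route is as follows. It takes $I \subseteq A = \mathcal{O}(K)$ to be the ideal of $G$ itself and notes $\mathfrak{a}^N \subseteq I \subseteq \mathfrak{a}$, so that $K = \mathrm{colim}_n\, \mathrm{Spec}(A/I^{n+1})$. $G$-stability of these infinitesimal neighborhoods of $G$ then comes for free: translation by $G$ maps $G$ into itself, so it preserves $I$ and all its powers. From there the paper proceeds exactly as you do: quotient of an affine scheme by a free action, descent of the closed immersions along the torsors, and identification of $K/G$ with the colimit.

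Your pieces $Y_n$, the scheme-theoretic images of $K_n \times G \to K$, also work. However, they require a verification that you only assert. $G$-stability of $Y_n$ needs scheme-theoretic image to commute with the flat base change along $G \to \mathrm{Spec}\, R$. Then $Y_n \times G$ is the image of $K_n \times G \times G$, which the action sends into $Y_n$.

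The paper's choice also pays off later. The $I$-adic filtration is exactly what Lemma \ref{lem: quotient quasiregular immersion} and Theorem \ref{theorem: quotients of FLG} use to compute the associated graded of $\mathcal{O}(K/G)$. Your $Y_n$ carry no such structure.

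Your closing worry is unfounded. The quotient theorem for free actions of finite locally free group schemes (SGA3 V, which the paper invokes in the same way) needs only that $Y_n$ is affine, $G$ is finite locally free, and the action is free. It needs nothing about flatness or finite presentation of $Y_n$ over $R$, so the noetherian-approximation and norm-ideal fallbacks are unnecessary.

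Your sketch that $C = \mathcal{O}(Y_n)^G$ is finite over $R$ (a point the paper also leaves implicit) is vague, but there is a clean argument. The map $C \to B = \mathcal{O}(Y_n)$ is faithfully flat, hence universally injective. So $B/C$ is flat and finitely presented, hence projective over $C$. Thus $C$ is a $C$-linear, and in particular $R$-linear, direct summand of the finite $R$-module $B$.
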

\begin{proof}
    Suppose that $K = \Spf(A)$ with augmentation ideal $\mathfrak{a} \subset A$. Let $I \subset A$ be the ideal of $G \subset K$, so that $A/I$ is a finite locally free $R$-module. Since the map $A \to A/I$ is continuous, this map factors through $A\to A/\mathfrak{a}^N$ for some $N$, that is, $\mathfrak{a}^N \subseteq I \subseteq \mathfrak{a}$. Hence the $\mathfrak{a}$-adic and $I$-adic topologies on $A$ agree. Set $K_n = \Spec(A/I^{n+1})$ for all $n \geq 0$; we have $K = \colim_n K_n$.

    Since $G$ is a subgroup of $K$, right translation by $G$ preserves $I$, so the translation action of $G$ on $K$ restricts to a free action of $G$ on $K_n$ for all $n$.
    Since $K_n$ is affine and $G$ is finite locally free, the quotient $K_n/G$ is representable by an $R$-scheme $Q_n$, and the map $K_n \to Q_n$ is an fppf $G$-torsor \cite[Tag \href{https://stacks.math.columbia.edu/tag/07S7}{07S7}]{stacks-project}. It follows by \cite[Tag \href{https://stacks.math.columbia.edu/tag/01ZT}{01ZT}]{stacks-project} that $Q_n$ is affine.
    For any $n \leq m$, the $G$-equivariant map $K_n \to K_m \times_{Q_m} Q_n$ is a morphism of $G$-torsors over $Q_n$, and therefore an isomorphism; this shows that the inclusions and projection morphisms fit into a Cartesian square
\be
\begin{tikzcd}
K_n \arrow[r] \arrow[d] & K_m \arrow[d] \\
Q_n \arrow[r] & Q_m.
\end{tikzcd}
\ee
It follows by faithfully flat descent that the natural maps $Q_n \to Q_m$ are closed immersions \cite[Tag \href{https://stacks.math.columbia.edu/tag/02L6}{02L6}]{stacks-project}. Define $Q := \colim_n Q_n$ in the category of fppf sheaves.  Since $Q_n \to Q_m$ is a closed immersion for $m \geq n$, the canonical maps $Q_n \to Q$ are all monomorphisms.

We claim that $Q$ is isomorphic to the fppf quotient sheaf $K/G$, which we show by checking the universal property.  For any fppf sheaf $F$, we have $\Hom(Q, F) = \Hom(\colim_n Q_n, F) = \ilim_n \Hom(Q_n, F)$.  Note that $Q_n = K_n/G$, so a map $Q_n \to F$ is the same as a $G$-invariant map $K_n \to F$, where $F$ is given the trivial $G$-action, so $\Hom(Q, F) = \ilim_n \Hom_G(K_n, F)$. But a map $K \to F$ is $G$-invariant if and only if the induced maps $K_n \to F$ are, and thus $\ilim_n \Hom_G(K_n, F) = \Hom_G(K, F)$, showing that $Q \cong K/G$.

    Since $(K_n,e)$ is infinitesimal, the ideal of $G \subseteq K_n$ is nilpotent, so descent implies $(Q_n,e)$ is infinitesimal. Thus $Q = \colim_n Q_n$ is an ind-infinitesimal $\alepho$-ind-scheme over $R$.
\end{proof}

\begin{lemma}\label{lem: quotient quasiregular immersion}
    Hypotheses as in Lemma \ref{lem: quotient sheaf is ind-scheme}. Let $A$ be the topological coordinate ring of $K$ and $I \subseteq A$ the ideal of the closed immersion $G \to K$.
    Then $I/I^2$ is a finite locally free $A/I$-module and the natural map
    \begin{equation*}
        \Sym_{A/I}(I/I^2) \to \bigoplus_{r \geq 0} I^r/I^{r+1}
    \end{equation*}
    is an isomorphism of graded $A/I$-algebras.
\end{lemma}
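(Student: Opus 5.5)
The idea is to use the group structure to trivialize the normal structure of $G$ in $K$ by translation. I will compare $G\subset K$ with the zero section $e:\Spec R\to K$. The ideal $\mathfrak{a}$ of $e$ is quasi-regular with $\mathfrak{a}/\mathfrak{a}^2=\omega_K$ finite locally free. This holds because, Zariski locally, $A=R[[x_1,\dots,x_d]]$, so $\bigoplus \mathfrak{a}^r/\mathfrak{a}^{r+1}=\Sym_R(\omega_K)$ is the polynomial ring.

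The key device is the isomorphism of formal schemes
\[
\phi: G\times_R K\to G\times_R K,\qquad (g,k)\mapsto (g,gk).
\]
Under $\phi$, the closed subscheme $G\times e$ goes to the diagonal-type subscheme $\{(g,g)\}\cong G$. In other words, the graph of the inclusion $G\to K$ corresponds to the zero section pulled back to $G$.

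I would carry this out in the following steps.

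\textbf{Step 1.} Pull back along the projection $G\times K\to G$. The ideal of $G\times e$ in $\mathcal{O}(G)\widehat\otimes_R A$ is $\mathcal{O}(G)\otimes_R\mathfrak{a}$. Since $\mathcal{O}(G)$ is finite flat over $R$, its powers are $\mathcal{O}(G)\otimes\mathfrak{a}^r$ and its graded algebra is $\mathcal{O}(G)\otimes_R\Sym_R(\omega_K)$, so this ideal is quasi-regular. Transporting by $\phi$, the graph $\Gamma\subset G\times K$ of the inclusion $G\hookrightarrow K$ has a quasi-regular ideal $J$. Moreover, $J/J^2\cong \mathcal{O}(G)\otimes_R\omega_K$ is finite locally free over $\mathcal{O}(\Gamma)=\mathcal{O}(G)$.

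\textbf{Step 2.} Descend from the graph to $G\subset K$. Consider the second projection $\mathrm{pr}_2: G\times K\to K$, which is finite locally free and faithfully flat. The ideal $IA'$, with $A'=\mathcal{O}(G)\otimes_R A$, defines $G\times G\subset G\times K$, not $\Gamma$. So I would instead use the shear $(g,k)\mapsto(g,g^{-1}k)$ composed with $\mathrm{pr}_2$. This exhibits $\Gamma$ as the pullback of $e$ along the flat map $\mu:G\times K\to K$, $(g,k)\mapsto g^{-1}k$.

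To reach $I$ itself, the cleaner route is the pullback along the action map $a:G\times K\to K$, $(g,k)\mapsto gk$. This map is isomorphic to $\mathrm{pr}_2$ via $\phi$, hence finite locally free. Its pullback of $G$ is $G\times G$, while $\phi$ identifies this with $G\times G$ sitting under $\mathrm{pr}_2$. This is circular, so the real input must be different. Instead I will use $K\to K/G=Q$ from Lemma \ref{lem: quotient sheaf is ind-scheme}. Here $K_n\to Q_n$ is an fppf $G$-torsor, and $K\times_Q K\cong G\times K$. The ideal $I$ is the pullback of the ideal $\mathfrak{b}$ of $e\in Q$, since $G=K\times_Q e$. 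The torsor map $K\to Q$ is faithfully flat at each finite level, so $I^r/I^{r+1}=A\otimes_{\mathcal{O}(Q)}\mathfrak{b}^r/\mathfrak{b}^{r+1}$. The statement therefore reduces to showing that $e:\Spec R\to Q$ is quasi-regular with finite locally free conormal module, and by faithfully flat descent this can be checked after pulling back along $K\to Q$ and then along $K\times_Q K\to K$.

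\textbf{Step 3.} Apply the same pullback via the second projection. The preimage of $G\subset K$ under $\mathrm{pr}_1: K\times_QK\to K$ is $G\times G\subset G\times K$ in the torsor coordinates. Its ideal is $\mathcal{O}(G)\otimes I$ on one side, and via the shear it is computed from $e$-data. Flatness of $\mathcal{O}(G)$ over $R$ then lets me compare $\bigoplus I^r/I^{r+1}$ after base change with the quasi-regular graded algebra from Step 1. Injectivity and surjectivity of $\Sym(I/I^2)\to\operatorname{gr}_I A$ can then be checked after the faithfully flat base change $A\to A'$.

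I expect the main obstacle to be arranging this flat comparison correctly, that is, choosing a finite flat cover on which $I$ becomes literally the extension of $\mathfrak{a}$ or of an ideal whose graded ring is known. If the shear bookkeeping fails, my fallback is to check the statement fiberwise over $R$. The graded pieces $I^r/I^{r+1}$ are finite modules, since $\mathfrak{a}^N\subseteq I$, and flat because they are successive quotients of $A/I^{r+1}$, which is finite flat as the torsor pullback of $Q_r$. So a surjection of finite flat modules that is an isomorphism on fibers is an isomorphism. Over a field, $A$ is regular local and $G\subset K$ is a finite subgroup scheme. There I would use that $\mathcal{O}(K)$ is free over $\mathcal{O}(Q)$ and $Q$ is formally smooth, hence $Q$ is a power series ring, so $I$ is generated by a regular sequence.
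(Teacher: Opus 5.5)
Your main line (Steps 1--3) never reaches the ideal $I$. The shear $(g,k)\mapsto(g,gk)$ only identifies $G\times e$ with the graph $\Gamma$, and $\Gamma$ is not the flat pullback of $G\subset K$ along any map. Every flat pullback of $I$ you consider gives the ideal $\mathcal{O}(G)\otimes_R I$ of $G\times G\subset G\times K$. This holds along $\mathrm{pr}_2$, along the action map, and along either projection $K\times_Q K\cong G\times K\to K$. That ideal is just the original problem after the flat base change $R\to\mathcal{O}(G)$. The shear maps $G\times G$ to itself, so no information about $\mathfrak{a}$ ever enters. Your reduction to the claim that $e\colon \Spec R\to Q$ is quasi-regular with finite locally free conormal module is correct. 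But it is equivalent to the lemma (by descent along $K_r\to Q_r$), and it is essentially the content of Theorem~\ref{theorem: quotients of FLG}, which this lemma is meant to prove. Checking it after pulling back along $K\to Q$ simply returns you to $G\subset K$. Translating by $G$ cannot replace the real input, which is that $G$ is a local complete intersection over $R$.

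Your fallback has a gap at exactly that point. You assert that $A/I^{r+1}$ is $R$-flat ``as the torsor pullback of $Q_r$'', but nothing is known about $Q_r=K_r/G$ beyond finiteness over $R$. Since $K_r\to Q_r$ is finite locally free and surjective, $R$-flatness of $Q_r$ is \emph{equivalent} to $R$-flatness of $A/I^{r+1}$, i.e.\ of the $I^j/I^{j+1}$ for $j\le r$. That is a substantive part of the conclusion. Over a non-reduced base, infinitesimal neighborhoods of a flat closed subscheme of $\mathbb{A}^d_R$ need not be flat unless the embedding is regular. Without this flatness, you cannot identify $I^r/I^{r+1}\otimes_R k(s)$ with $I_s^r/I_s^{r+1}$, and the fiberwise isomorphism criterion is unavailable. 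Over a field you also assert without proof that $Q$ is formally smooth. This is fixable, e.g.\ by descent of regularity along the finite free map $\mathcal{O}(Q)\to\mathcal{O}(K)$.

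The paper supplies the missing idea directly: finite locally free group schemes are local complete intersections \cite[Chapter II, Lemma 3.2.5]{Mes72}, \cite[Exp.\ III, Proposition 4.15]{SGA3I}. After localizing so that $A=R[[x_1,\dots,x_d]]$, the closed immersion $G\to\mathbb{A}^d_R$ given by $P=R[x_1,\dots,x_d]\to A/I$ is therefore Koszul-regular, hence quasi-regular. Its ideal $\mathfrak{q}$ satisfies $\mathfrak{p}^N\subseteq\mathfrak{q}\subseteq\mathfrak{p}$, so $A=P^\wedge_{\mathfrak{q}}$. Then $A/\mathfrak{q}^rA=P/\mathfrak{q}^r$ identifies $I^r/I^{r+1}$ with $\mathfrak{q}^r/\mathfrak{q}^{r+1}$. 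Alternatively, you could rescue the fiberwise route using the flatness you actually have, namely that $P$ and $A/I=\mathcal{O}(G)$ are $R$-flat. Combine this with the fact that finite group schemes over a field are complete intersections, and with the fiberwise criterion for regular sequences with flat quotients.
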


\begin{proof}
The statement is Zariski-local on $\Spec R$, so passing to an open subset we may assume that $A = R[[x_1, \dots, x_d]]$ with augmentation ideal $(x_1, \dots, x_d)$, where $d$ is the dimension of $K$ on the open subset.
Write $B = A/I$. Let $P = R[x_1, \dots, x_d]$ and $\mf{p} = (x_1, \dots, x_d) \subset P$. The surjection $A \to B$ induces a surjection $P \to B$; let $\mf{q} = \ker(P \to B)$. We have $\mf{p}^N \subset \mf{q} \subset \mf{p}$ for some $N$, so the $\mf{p}$-adic and $\mf{q}$-adic filtrations are cofinal and consequently $A$ is also the completion of $P$ with respect to $\mf{q}$. 

The surjection $P \to B$ corresponds to a closed immersion $G \to \A^d_R$. Moreover, by \cite[Chapter II, Lemma 3.2.5]{Mes72} and \cite[Exp. III, Proposition 4.15]{SGA3I} finite locally free group schemes are local complete intersections. It follows that the morphism $G \to \A^d_R$ is a Koszul-regular immersion \cite[Tag \href{https://stacks.math.columbia.edu/tag/069G}{069G}]{stacks-project}, hence a quasi-regular immersion \cite[Tag \href{https://stacks.math.columbia.edu/tag/063K}{063K}]{stacks-project}. By \cite[Tag \href{https://stacks.math.columbia.edu/tag/063H}{063H}]{stacks-project} $\mf{q}$ is a finitely generated ideal of $P$, $\mf{q}/\mf{q}^2$ is a finite locally free $B$-module and the map 
\be
\Sym_B^\bu(\mf{q}/\mf{q}^2) \to \bigoplus_{r \geq 0} \mf{q}^r/\mf{q}^{r+1}
\ee
is an isomorphism.

We now show that there are compatible isomorphisms of $B$-modules $I^r/I^{r+1} \cong \mf{q}^r/\mf{q}^{r+1}$ for all $r \geq 0$, from which the lemma follows. Since $A = P^\wedge_{\mf{q}}$ and $\mf{q}$ is finitely generated, we have by \cite[Tag \href{https://stacks.math.columbia.edu/tag/05GG}{05GG}]{stacks-project} that $A/\mf{q}^rA = P/\mf{q}^r$ for all $r \geq 1$, which upon taking $r = 1$ implies $I = \mf{q}A$.  Consequently, we may identify the kernel of $A/I^{r+1} \to A/I^r$ with the kernel of $P/\mf{q}^{r+1} \to P/\mf{q}^r$ to conclude $I^r/I^{r+1} \cong \mf{q}^r/\mf{q}^{r+1}$ for all $r \geq 0$. These isomorphisms are all induced by $P \to A = P^{\wedge}_{\mf{q}}$ and therefore they preserve multiplication.
\end{proof}

\begin{theorem} \label{theorem: quotients of FLG}
    Let $R$ be a ring, $K \to \Spec R$ a formal Lie group, $G \subset K$ a finite locally free closed normal group scheme.
    Then $K/G$ is a formal Lie group over $R$.
\end{theorem}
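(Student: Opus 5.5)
By Lemma \ref{lem: quotient sheaf is ind-scheme}, $Q := K/G = \colim_n Q_n$ with $Q_n = K_n/G$, $K_n = \Spec(A/I^{n+1})$, is an ind-infinitesimal $\alepho$-ind-scheme. Moreover $Q$ inherits a commutative group structure as a quotient of abelian fpqc sheaves; when $K$ is not commutative, it is the group structure on the quotient by a normal subgroup. It therefore suffices to show that $(Q,e)$ is formally smooth with finite locally free tangent space. By Theorem \ref{theorem: formal alepho-Lie varieties}, $Q$ is then a based polydisk $\Ahat_M$ with $M$ finite locally free, hence a formal Lie group. The question is Zariski local on $\Spec R$, so I would assume $A = R[[x_1,\dots,x_d]]$ and that $I/I^2$ is free over $B = A/I$.

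The plan is to compute the pro-coordinate ring of $Q$ via descent and show it is a power series ring. Write $\mathcal{O}(Q_n) = (A/I^{n+1})^G$, where the invariants are the equalizer of the coaction. Since $K_n \to Q_n$ is a $G$-torsor and $Q_m \times_{Q_n}$-compatibility holds (the Cartesian squares in the proof of Lemma \ref{lem: quotient sheaf is ind-scheme}), the ideal $J_n := \ker(\mathcal{O}(Q_n) \to \mathcal{O}(Q_0))$ satisfies $J_n \cdot (A/I^{n+1}) = I/I^{n+1}$. Indeed, $K_0 = G = K_n \times_{Q_n} Q_0$, so $Q_0 = G/G = \Spec R$. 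Faithfully flat descent along $\mathcal{O}(Q_n) \to A/I^{n+1}$ then transports graded pieces: $J^r_n/J^{r+1}_n \otimes_{R} B \cong I^r/I^{r+1}$ for $r \le n$, and by descent $J/J^2 =: N$ is a finite locally free $R$-module with $N \otimes_R B \cong I/I^2$. More precisely, $K_n \to Q_n$ is flat and $Q_0 = \Spec R$, so $J^r/J^{r+1} \otimes_{\mathcal{O}(Q_0)} \mathcal{O}(K_0)$ compares to $I^r/I^{r+1}$ via flat base change of the associated graded. Lemma \ref{lem: quotient quasiregular immersion} says $\Sym_B(I/I^2) \to \operatorname{gr}_I A$ is an isomorphism. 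Since $R \to B$ is faithfully flat (as $G$ is finite locally free), descent yields that $\Sym_R(N) \to \operatorname{gr}_J \mathcal{O}(Q)$ is an isomorphism, where $\mathcal{O}(Q) = \lim_n \mathcal{O}(Q_n)$.

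Given this, I would lift a local basis $y_1,\dots,y_e$ of $N$ to $\mathcal{O}(Q)$ and obtain a continuous map $R[[y_1,\dots,y_e]] \to \mathcal{O}(Q)$. This map is an isomorphism modulo each $J^{n+1}$ by the graded isomorphism and induction on $n$, so $\mathcal{O}(Q_n) = \mathcal{O}(Q)/J^{n+1}$ and $Q \cong \Spf R[[y_1,\dots,y_e]]$. This is formally smooth with free tangent space $N^\vee$. Comparing ranks, $e = d$, consistent with Lemma \ref{lemma: isogeny implies same dimension}, but this is not needed.

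The main obstacle is the descent step. One must check that the ideal of $Q_0 \subset Q_n$ pulls back exactly to $I/I^{n+1}$, and that its powers pull back to $I^r/I^{n+1}$, i.e.\ $J^r \mathcal{O}(K_n) = I^r$. This follows because $\mathcal{O}(Q_n) \to \mathcal{O}(K_n)$ is faithfully flat, so extension of ideals commutes with products and $J_n^r/J_n^{r+1} \otimes_{\mathcal{O}(Q_n)} \mathcal{O}(K_n) \cong I^r/I^{r+1}$. Since $J$ kills these modules, this tensor product is $\otimes_R B$. Faithful flatness of $R \to B$ then descends both the local freeness of $N$ and the isomorphism $\Sym_R N \cong \operatorname{gr}_J$, because both source and target base change to the corresponding objects for $I$ via Lemma \ref{lem: quotient quasiregular immersion}.
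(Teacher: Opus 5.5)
Your proposal is correct and follows essentially the paper's argument. You pull back the associated graded of the coordinate ring of $Q$ (for the ideal $J$ of the identity) along the faithfully flat cover to the $I$-adic associated graded of $A$, invoke Lemma \ref{lem: quotient quasiregular immersion}, descend along the faithfully flat map $R \to \mathcal{O}(G)$ to get $\Sym_R(J/J^2)$ isomorphic to the associated graded, and lift generators to obtain a power series ring. The only differences are cosmetic: you work level-by-level with $J_n \subset \mathcal{O}(Q_n)$ and lift a basis after Zariski localization, whereas the paper works directly with the completed ring and uses a global $R$-linear splitting $J/J^2 \to J$.
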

\begin{proof}
    By Lemma \ref{lem: quotient sheaf is ind-scheme}, $Q=K/G$ is an $\alepho$-ind-scheme over $R$.
    Let $C$ be the topological coordinate ring of $Q$.
    Since $(Q,e)$ is ind-infinitesimal, $C$ is separated and complete with respect to the ideal $J$ of $e$.
    To show that $Q$ is a formal Lie group over $R$, we must show that $C$ is isomorphic to a completed symmetric algebra of a finite locally free $R$-module.
    Let $A$ be the topological coordinate ring of $K$ and $I \subseteq A$ the ideal of $G$.
    
    Consider the associated graded $\bigoplus_n J^n /J^{n+1}$ of $C$ with respect to $J$.
    Since $G$ is the kernel of $K \to Q$, the pullback of $J$ along $K \to Q$ is $I$. 
    Thus, the pullback along $G \to \Spec R$ of $J^n/J^{n+1}$ is $I^n/I^{n+1}$.
    By Lemma \ref{lem: quotient quasiregular immersion}, $I/I^2$ is a finite locally free $A/I$-module and the natural map of graded $A/I$-algebras
    \[ \Sym_{A/I}(I/I^2) \to \bigoplus_n I^n/I^{n+1}\]
    is an isomorphism.
    Since $G\to \Spec R$ is faithfully flat, $J/J^2$ is a finite locally free $R$-module and the natural map of graded $R$-algebras
    \[ \Sym_R(J/J^2) \to \bigoplus_n J^n/J^{n+1}\]
    is an isomorphism.

    Now we show that $C$ is isomorphic to a completed symmetric algebra over $R$.
    Since $J/J^2$ is finite locally free over $R$, there exists an $R$-linear splitting $\psi: J/J^2 \to J$ of the quotient map $J \to J/J^2$.
    Then $\psi$ defines an $R$-algebra homomorphism
    $\phi: \Sym_R(J/J^2) \to C$
    which induces the canonical map $\Sym_R(J/J^2) \to \bigoplus_{n} J^n/J^{n+1}$ on associated graded.
    Since $C$ is separated and complete with respect to $J$ and $\phi$ is an isomorphism on associated graded, the map $\phi$ induces an isomorphism $\hat{\phi}: \widehat{\Sym}_R(J/J^2) \overset{\sim}{\to} C$.
    Thus $\Spf(C)$ is a polydisk and $Q$ is a formal Lie group.
\end{proof}

\begin{corollary}[The B\'egueri Resolution for Connected Group Schemes] \label{corollary: begueri for connected}
Suppose $G \in C(R)$.  Then there is a functorial short exact sequence $0 \to G \to K \to L \to 0$ with $K$ and $L$ commutative formal Lie groups over $R$.
\end{corollary}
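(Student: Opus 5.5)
The plan is to take $K$ to be the formal completion at the identity of Bégueri's smooth group, as already constructed in the proof of Proposition \ref{proposition: connected conditions}, and then set $L := K/G$.

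\textbf{Step 1: construct $K$ and the embedding.} Let $G^\vee$ be the Cartier dual of $G$, and let $H = \Res_{G^\vee/R}(\G_{m,G^\vee}) = \ul{\Map}_R(G^\vee,\G_m)$. Let $K := \widehat{H}_e$ be its completion along the identity section. Then:
\begin{itemize}
\item $H$ is a smooth affine commutative $R$-group scheme \cite[Proposition 4.4]{Sch94}.
\item Its completion $K$ is a commutative formal Lie group; its tangent space is the finite locally free module $\Lie(H)$.
\item Bégueri's embedding $G = \ul{\Hom}(G^\vee,\G_m) \hookrightarrow H$ is a closed immersion of group schemes.
\item By Proposition \ref{proposition: connected conditions}, $\mathcal{O}(G)$ is augmented nilpotent, so this immersion factors through some infinitesimal neighborhood of $e$. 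This gives a closed subgroup $G \subset K$.
\end{itemize}

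\textbf{Step 2: construct $L$ and check exactness.} Since $K$ is commutative, $G$ is normal in $K$. Theorem \ref{theorem: quotients of FLG} then says that $L := K/G$ (the fppf quotient) is a formal Lie group, and it is commutative because $K$ is. The sequence $0 \to G \to K \to L \to 0$ is exact as fppf sheaves by construction. Since $G$ is finite locally free, the map $K \to L$ is a $G$-torsor, so the sequence is also exact as fpqc sheaves (cf. Remark \ref{remark: fppf vs fpqc}). In particular $K \to L$ is an isogeny.

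\textbf{Step 3: functoriality.} This is the step I expect to need the most care, though it is still formal. Let $\phi: G_1 \to G_2$ be a morphism in $C(R)$.
\begin{itemize}
\item Cartier duality gives $\phi^\vee: G_2^\vee \to G_1^\vee$. Precomposition with $\phi^\vee$ defines a homomorphism $H_1 = \ul{\Map}(G_1^\vee,\G_m) \to \ul{\Map}(G_2^\vee,\G_m) = H_2$.
\item This homomorphism restricts, on the subsheaves of homomorphisms, to $\phi$. So it is compatible with the Bégueri embeddings.
\item It is compatible with composition, so $G \mapsto H$ is a functor.
\item Any group homomorphism preserves the identity section, so it induces a map of completions $K_1 \to K_2$.
\item The map $K_1 \to K_2$ carries $G_1$ into $G_2$, so it descends to a map of quotient sheaves $L_1 \to L_2$ by the universal property of the quotient.
\end{itemize}
Compatibility with identities and composites follows from the same properties of $\ul{\Map}(-,\G_m)$ and of completion. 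Finally, I would note that every construction here commutes with base change and is canonical, so the result is a functor from $C(R)$ to short exact sequences of commutative formal Lie groups.
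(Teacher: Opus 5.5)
Your proof is correct and takes the same approach as the paper: embed $G$ into the formal completion $K$ at the identity of B\'egueri's smooth group $\Res_{G^\vee/R}(\G_{m,G^\vee})$, then set $L = K/G$, which is a formal Lie group by Theorem \ref{theorem: quotients of FLG}. Your Step 3 writes out the functoriality of the embedding, which the paper takes as evident from the canonical construction in Proposition \ref{proposition: connected conditions}.
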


\begin{proof}
We have already seen that there is a functorial embedding $G \to K$ with $K$ a commutative formal Lie group over $R$, so the result follows by taking $L$ to be the fppf quotient $K/G$ which is a formal Lie group by Theorem \ref{theorem: quotients of FLG}.
\end{proof}

\subsection{The Cartier--Dieudonné functor}
Throughout, let $R$ be a $p$-nilpotent ring. 

\begin{definition}
    Let $\uM: C(R) \to D(\E_R)$ be the functor 
    \[ \uM(G) := \tau^{\leq 1} R\Hom_{\fpqc}(\What,G).\]
\end{definition}

Here $R\Hom_{\fpqc}$ is computed in the category of fpqc sheaves of abelian groups over $R$, and the resulting object carries a natural $\E_R$-module structure via the identification $\E_R = \End(\What)^{\op}$. We begin by explaining how to compute $\uM(G)$ via two-term resolutions by formal Lie groups. 

\begin{proposition}\label{prop: resolution of MG}
Let $G \in C(R)$, and suppose $0 \to G \to K \to L \to 0$ is a short exact sequence where $K$ and $L$ are commutative formal Lie groups over $R$.  Then there is an isomorphism 
\be
\uM(G) \cong \Fib(M(K) \to M(L))
\ee
in $D(\E_R)$.
\end{proposition}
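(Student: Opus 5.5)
Apply $R\Hom_{\fpqc}(\What,-)$ to the short exact sequence $0 \to G \to K \to L \to 0$ of fpqc abelian sheaves. This gives a distinguished triangle
\[ R\Hom_{\fpqc}(\What,G) \to R\Hom_{\fpqc}(\What,K) \to R\Hom_{\fpqc}(\What,L) \overset{+1}{\to} \]
in $D(\E_R)$. The $\E_R$-module structures are compatible, since $\E_R = \End(\What)^{\op}$ acts on every $R\Hom_{\fpqc}(\What,-)$ by precomposition and the triangle is functorial in the second variable. To make this precise, I would compute $R\Hom$ using an injective resolution in the category of fpqc abelian sheaves. Then $R\Hom_{\fpqc}(\What,-)$ is the complex $\Hom(\What, I^\bullet)$, which carries an honest right action of $\End(\What)$, that is, a left $\E_R$-action. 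A short exact sequence of sheaves gives a short exact sequence of injective resolutions by the horseshoe lemma, and hence a triangle of $\E_R$-complexes.

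Write $X = R\Hom_{\fpqc}(\What,G)$. Set $M_K = R\Hom_{\fpqc}(\What,K)$ and $M_L = R\Hom_{\fpqc}(\What,L)$. Their cohomology vanishes in negative degrees, and $H^0(M_K) = M(K)$ and $H^0(M_L) = M(L)$ are the Cartier modules. By Proposition \ref{prop: ext1 from What}, $H^1(M_K) = \Ext^1_{\fpqc}(\What,K) = 0$ and likewise $H^1(M_L)=0$. So $\tau^{\leq 1}M_K \simeq M(K)[0]$ and $\tau^{\leq 1}M_L \simeq M(L)[0]$. Set $F = \Fib(M(K) \to M(L))$. There is a natural map $F \to \Fib(M_K \to M_L) \simeq X$, induced by the maps $\tau^{\leq 0} M_K \to M_K$ and $\tau^{\leq 0}M_L \to M_L$; this is functorial because truncation is a functor. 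Since $F$ has cohomology in degrees $[0,1]$, the map factors through $\tau^{\leq 1}X = \uM(G)$.

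It remains to check that $F \to \tau^{\leq 1}X$ is an isomorphism on $H^0$ and $H^1$, using the long exact sequences. On $H^0$ we have $H^0(F) = \ker(M(K)\to M(L)) = H^0(X)$. On $H^1$ we have $H^1(F) = \coker(M(K) \to M(L))$, while the long exact sequence for $X$ reads
\[ M(K) \to M(L) \to H^1(X) \to H^1(M_K)=0. \]
So $H^1(X) \cong \coker(M(K)\to M(L))$, and the comparison map on $H^1$ is the map between these two cokernel descriptions. By compatibility of the connecting maps with the morphism of triangles, this map is the identity.

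The step needing the most care is that the comparison map commutes with the connecting homomorphisms. I expect this to be formal, via a map of triangles built from $\tau^{\leq 0}$. It does require working at the level of complexes of $\E_R$-modules rather than abstractly in $D(\Z)$, which is why I use explicit injective resolutions. The essential input is only the vanishing $\Ext^1_{\fpqc}(\What,-)=0$ on formal Lie groups from Proposition \ref{prop: ext1 from What}.
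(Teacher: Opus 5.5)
Your proposal is correct and is essentially the paper's proof: apply $R\Hom_{\fpqc}(\What,-)$ to the resolution, use Proposition~\ref{prop: ext1 from What} to kill $\Ext^1_{\fpqc}(\What,K)$ and $\Ext^1_{\fpqc}(\What,L)$, and compare cohomology using the long exact sequence. The only difference is the direction of the comparison map: you map $\Fib(\tau^{\leq 0}R\Hom(\What,K)\to\tau^{\leq 0}R\Hom(\What,L))$ into $\tau^{\leq 1}R\Hom(\What,G)$, whereas the paper maps $\tau^{\leq 1}R\Hom(\What,G)$ into $\Fib(\tau^{\leq 1}R\Hom(\What,K)\to\tau^{\leq 1}R\Hom(\What,L))$; in your direction only the vanishing of $\Ext^1_{\fpqc}(\What,K)$ is actually needed.
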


\begin{proof}
We have a distinguished triangle
\begin{equation} \label{equation: RHom triangle}
R\Hom_{\fpqc}(\What, G) \to R\Hom_{\fpqc}(\What, K) \to R\Hom_{\fpqc}(\What, L) \overset{+1}\to
\end{equation}
which induces a map
\begin{equation} \label{equation: truncation map}
\tau^{\leq 1} R\Hom_{\fpqc}(\What, G) \to \Fib(\tau^{\leq 1} R\Hom_{\fpqc}(\What, K) \to \tau^{\leq 1} R\Hom_{\fpqc}(\What, L)).
\end{equation}
By Proposition \ref{prop: ext1 from What}, $\Ext^1_{\fpqc}(\What, K) = \Ext^1_{\fpqc}(\What, L) = 0$, so $\tau^{\leq 1} R\Hom_{\fpqc}(\What, K) = \Hom(\What, K)$ and similarly for $L$.  The long exact sequence in cohomology associated to the distinguished triangle (\ref{equation: RHom triangle}) then shows that the induced map (\ref{equation: truncation map}) is a quasi-isomorphism
\be
\tau^{\leq 1} R\Hom_{\fpqc}(\What, G) \cong \Fib(\Hom(\What, K) \to \Hom(\What, L))
\ee
as desired.
\end{proof}

The Proposition shows that $\uM(G)$ can be computed via any two-term resolution of $G$ by formal Lie groups, and consequently that any such resolution produces a well-defined object of $D(\E_R)$.  We may also prove this fact directly:

\begin{proposition}
Suppose that $0 \to G \to K^0 \to K^1 \to 0$ and $0 \to G \to L^0 \to L^1 \to 0$ are two resolutions of $G$ by commutative formal Lie groups over $R$.  Then there is an isomorphism $\Fib(M(K^0) \to M(K^1)) \cong \Fib(M(L^0) \to M(L^1))$ in $D(\E_R)$.
\end{proposition}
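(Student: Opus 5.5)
The idea is to compare both resolutions with a third one that dominates them, so that the comparison happens entirely inside the category of formal Lie groups and Cartier theory. Since the paper says "we may also prove this fact directly," I will avoid using $R\Hom_{\fpqc}$ and Proposition \ref{prop: ext1 from What} in its derived form.

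\textbf{The dominating resolution.} Consider the diagonal embedding $G \to K^0 \times L^0$. The product $K^0\times L^0$ is a commutative formal Lie group and $G$ is a finite locally free closed subgroup of it. By Theorem \ref{theorem: quotients of FLG}, the quotient $N^1 := (K^0\times L^0)/G$ is a commutative formal Lie group. This gives a third resolution $0\to G\to N^0 \to N^1\to 0$ with $N^0 = K^0\times L^0$. The projection $N^0\to K^0$ is compatible with the embeddings of $G$, so it induces a map of resolutions $(N^0\to N^1)\to (K^0\to K^1)$ which is the identity on $G$; the same holds for $L$. It therefore suffices to show that a morphism of resolutions $(N^0\to N^1)\to(K^0\to K^1)$ inducing the identity on $G$ gives a quasi-isomorphism of fibers $\Fib(M(N^0)\to M(N^1))\to \Fib(M(K^0)\to M(K^1))$.

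\textbf{Reduction to a short exact sequence.} Let $P=\ker(N^0\to K^0)$, which is isomorphic to $L^0$ because $N^0 = K^0\times L^0$. Since both maps restrict to the identity on $G$ and $G\cap P=0$, the snake lemma in fpqc abelian sheaves shows:
\begin{itemize}
\item $P\to \ker(N^1\to K^1)$ is an isomorphism;
\item $N^1\to K^1$ is surjective, because $N^0\to K^0$ is surjective.
\end{itemize}
So we obtain a morphism of short exact sequences of commutative formal Lie groups
\[0\to P\to N^i\to K^i\to 0,\qquad i=0,1,\]
whose left vertical map is the isomorphism $P\to P$. By Corollary \ref{corollary: M is exact on FLGs}, applying $M$ keeps these sequences exact. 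Hence the map of two-term complexes $[M(N^0)\to M(N^1)]\to[M(K^0)\to M(K^1)]$ is surjective in each degree, and its kernel is $[M(P)\xrightarrow{\mathrm{id}} M(P)]$, which is acyclic. The map on fibers is therefore a quasi-isomorphism. Composing the first comparison with the inverse of the second then gives the required isomorphism.

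\textbf{Main obstacle.} The delicate step is identifying $\ker(N^1\to K^1)$ with $P$ and checking surjectivity, since this is a diagram chase in sheaves. It also needs $\ker(N^1\to K^1)$ to be a formal Lie group, which is immediate here from the isomorphism with $P\cong L^0$. The remaining steps are routine applications of Theorem \ref{theorem: quotients of FLG} and Corollary \ref{corollary: M is exact on FLGs}.
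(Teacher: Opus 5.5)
Your proposal is correct and follows essentially the same route as the paper: you diagonally embed $G$ into $K^0\times L^0$, use Theorem \ref{theorem: quotients of FLG} to get a dominating resolution, and identify both comparison kernels with $L^0$. You then conclude via Corollary \ref{corollary: M is exact on FLGs}; your acyclic-kernel formulation is just a restatement of the paper's snake-lemma step.
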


\begin{proof}
Diagonally embed $G$ into $K^0 \times L^0$ and let $Q$ be the cokernel, a formal Lie group by Theorem \ref{theorem: quotients of FLG}. We have a commutative diagram
\be
\begin{tikzcd}
0 \arrow[r] & G \arrow[d] \arrow[r] & K^0 \times L^0 \arrow[r] \arrow[d] & Q \arrow[r] \arrow[d] & 0 \\
0 \arrow[r] & G \arrow[r] & K^0 \arrow[r] & K^1 \arrow[r] & 0.
\end{tikzcd}
\ee
Clearly $\ker(K^0 \times L^0 \to K^0) = L^0$, while $\ker(Q \to K^1) = (G \times L^0)/G \cong L^0$ and the induced map $L^0 \to L^0$ is the identity. By the exactness of the Cartier module functor on formal Lie groups (Corollary \ref{corollary: M is exact on FLGs}), we obtain a diagram of Cartier modules
\be
\begin{tikzcd}
0 \arrow[r] & M(L^0) \arrow[d] \arrow[r] & M(K^0 \times L^0) \arrow[r] \arrow[d] & M(K^0) \arrow[r] \arrow[d] & 0 \\
0 \arrow[r] & M(L^0) \arrow[r] & M(Q) \arrow[r] & M(K^1) \arrow[r] & 0.
\end{tikzcd}
\ee
The left vertical arrow is an isomorphism, so by the snake lemma, the map of complexes $[M(K^0 \times L^0) \to M(Q)] \to [M(K^0) \to M(K^1)]$ is a quasi-isomorphism.  Since we can make the same argument for $M(L^0) \to M(L^1)$, it follows that $\Fib(M(K^0) \to M(K^1))$ and $\Fib(M(L^0) \to M(L^1))$ are isomorphic in $D(\E_R)$, as desired.
\end{proof}

\begin{remark} \label{remark: alternate construction}
We have seen that $G \in C(R)$ admits a functorial 2-term resolution by formal Lie groups (Corollary \ref{corollary: begueri for connected}).  Combining this with Proposition \ref{prop: resolution of MG} therefore provides an alternate construction of the functor $\uM: C(R) \to D(\E_R)$.
\end{remark}

\subsection{Full faithfulness of the Cartier--Dieudonné functor}

\begin{theorem} \label{theorem: fully faithful}
    Let $R$ be a p-nilpotent ring. Then the Cartier--Dieudonn\'e functor
    \[ \uM: C(R) \to D(\E_R)\]
    is fully faithful. 
\end{theorem}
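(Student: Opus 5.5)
We use the resolutions of Corollary \ref{corollary: begueri for connected} to turn maps of group schemes into maps of two-term complexes of Cartier modules, and then into maps of two-term complexes of formal Lie groups. Fix $G, H \in C(R)$ with resolutions $0 \to G \to K^0 \to K^1 \to 0$ and $0 \to H \to L^0 \to L^1 \to 0$. By Proposition \ref{prop: resolution of MG}, $\uM(G)$ is represented by the complex $[M(K^0) \to M(K^1)]$ in degrees $0,1$, and similarly for $\uM(H)$. The task is to compute $\Hom_{D(\E_R)}(\uM(G), \uM(H))$ and compare it with $\Hom(G,H)$.

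\textbf{Step 1: the long exact sequence.} Filter the source by the stupid truncation: $M(K^1)[-1] \to \uM(G) \to M(K^0)$. Applying $\Hom_{D(\E_R)}(-, \uM(H))$ gives a long exact sequence whose terms are
\[
\Hom_D(M(K^i), \uM(H)[j]),
\]
and each of these sits in a further long exact sequence involving $\Hom_{\E_R}(M(K^i), M(L^k))$ and $\Ext^1_{\E_R}(M(K^i), M(L^k))$. We also need $\Ext^{\geq 2}$, and here we use that Cartier modules of formal Lie groups have projective dimension at most $1$. This follows from the structure equations (Proposition \ref{prop: structure equations}) after adding a summand, as in the proof of Lemma \ref{lemma: tor vanishing}.

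\textbf{Step 2: the parallel sequence on the geometric side.} Perform the same dévissage for $R\Hom_{\fpqc}$ of abelian sheaves, applied to $G = [K^0 \to K^1]$ and $H = [L^0 \to L^1]$. There are natural comparison maps between the two sequences:
\begin{itemize}
\item On $\Hom$ terms, $\Hom(K^i, L^k) \cong \Hom_{\E_R}(M(K^i), M(L^k))$ by Theorem \ref{theorem: cartier in countable type}.
\item On $\Ext^1$ terms, $\Ext^1_{\fpqc}(K^i, L^k) \cong \Ext^1_{\E_R}(M(K^i), M(L^k))$ by Lemma \ref{lemma: cartier functor on ext1}.
\end{itemize}
The five lemma then identifies $\Hom_{D(\E_R)}(\uM(G), \uM(H))$ with $\Hom_{D(\fpqc)}([K^0 \to K^1], [L^0 \to L^1])$, which equals $\Hom(G,H)$ because both complexes are quasi-isomorphic to $G$ and $H$ concentrated in degree $0$.

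\textbf{Main obstacle: the degree-$2$ terms.} The five lemma requires control one step further along the sequence, at the terms $\Ext^2(K^0, L^1)$ versus $\Ext^2_{\E_R}(M(K^0), M(L^1)) = 0$. On the sheaf side only injectivity of the comparison map is needed, which is automatic when the target is the Cartier side and that side vanishes. Even so, the comparison must be set up carefully.

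\textbf{Alternative: avoid higher Ext altogether.} A cleaner route is to describe maps directly.
\begin{itemize}
\item Since $M(K^0)$ has projective dimension at most $1$, and using the cone description, a map $\uM(G) \to \uM(H)$ in $D(\E_R)$ is represented, after replacing $K^0 \to K^1$ by a quasi-isomorphic resolution, by a chain map of two-term complexes up to homotopy.
\item Lift the chain map to formal Lie groups using full faithfulness of $M$ (Theorem \ref{theorem: cartier in countable type}).
\item Note that homotopies $M(K^1) \to M(L^0)$ also lift.
\item Chain maps $[K^0 \to K^1] \to [L^0 \to L^1]$ modulo homotopy are exactly $\Hom(G,H)$. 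A chain map induces a map on kernels. Conversely, given $G \to H$, the pushout $K^0 \sqcup_G L^0$ lets us extend, using that $\Ext^1_{\fpqc}(K^1, L^0)$ is computed compatibly by Lemma \ref{lemma: cartier functor on ext1}.
\end{itemize}

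The hardest part of this route is showing that every derived map is represented by an honest chain map from the given resolution. To handle it, I would first use the "diagonal" resolution trick from the Proposition comparing resolutions to replace $K^0$ by $K^0 \times L'$ for suitable $L'$. I would then show that an obstruction class in $\Ext^1_{\E_R}(M(K^0), M(L^0))$ dies after this enlargement, because the corresponding geometric extension can be absorbed.
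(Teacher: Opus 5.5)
\textbf{There is a genuine gap.} First, the obstacle you single out is misdiagnosed, and your way around it is incorrect. The map $\Ext^2_{\fpqc}(K^0,L^1)\to\Ext^2_{\E_R}(M(K^0),M(L^1))=0$ is injective only if $\Ext^2_{\fpqc}(K^0,L^1)=0$, and nothing in the paper gives you that. Fortunately, no degree-$2$ terms are needed. In your source-filtration sequence the five lemma asks for:
\begin{itemize}
\item surjectivity at $\Hom(K^1,H)$;
\item bijectivity at $\Hom(K^0,H)$ and at $\Ext^1_{\fpqc}(K^1,H)$;
\item injectivity at $\Ext^1_{\fpqc}(K^0,H)$.
\end{itemize}
Each of these follows from the target-filtration sequences, modulo the compatibility issue below. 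The only inputs are $\Hom$ and $\Ext^1$ between formal Lie groups (Theorem~\ref{theorem: cartier in countable type}, Lemma~\ref{lemma: cartier functor on ext1}) together with $\Hom(M(K^i),M(L^1)[-1])=0$.

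The real missing step is the one you cover with ``natural comparison maps'' and ``must be set up carefully''. The functor $\uM=\tau^{\le 1}R\Hom_{\fpqc}(\What,-)$ is not triangulated. So there is no automatic map $\Ext^1_{\fpqc}(K^i,H)\to\Hom_{D(\E_R)}(M(K^i),\uM(H)[1])$, and no automatic reason for the comparison maps to commute with the connecting homomorphisms. Moreover, the isomorphism of Lemma~\ref{lemma: cartier functor on ext1} is defined on Yoneda extensions, so you must also show it is the map occurring in your long exact sequences. Without this, the five lemma does not apply.

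This is exactly the non-formal square $(\ast)$ in the paper's proof, and the paper orders the d\'evissage so that it is the only such square. It first takes the target to be a formal Lie group $L$ and filters only the source. For $f\colon G\to L$ it forms the pushout $X$ of $K^0\leftarrow G\to L$, which is a formal Lie group by Theorem~\ref{theorem: alepho-Lie closed under extensions}. It then applies Proposition~\ref{prop: resolution of MG} to the resolution $0\to G\to K^0\oplus L\to X\to 0$ to see that both composites send $f$ to $[M(X)]$. Finally it passes to general $H$ using only left exactness and $\Ext^{-1}(\uM(G),M(L))=0$, where no connecting map appears. Alternatively, you could define your comparison maps through the triangulated functor $R\Hom_{\fpqc}(\What,-)$, but then you must check that truncation does not change the relevant $\Hom$ groups and is compatible with the triangles.

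Your alternative route does not close the gap either. For fixed resolutions, chain maps modulo homotopy do not compute $\Hom(G,H)$: extending $G\to H\to L^0$ over $K^0$ is obstructed by a class in $\Ext^1_{\fpqc}(K^1,L^0)$ that can be nonzero. Also, the $M(K^i)$ are not projective, so maps in $D(\E_R)$ are not chain maps up to homotopy. Enlarging the resolution is the right idea (it is the paper's pushout), but the claim that the obstruction dies after enlargement is the whole difficulty, and you leave it unproved.
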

\begin{proof}
    First, we show that $M: \Hom(G,L) \to \Hom_{D(\E_R)}(\uM(G),M(L))$ is an isomorphism when $G \in C(R)$ and $L$ is a formal Lie group.
    Let $0 \to G \to K^0 \to K^1 \to 0$ be a short exact sequence in the fpqc topology, where $K^0$ and $K^1$ are formal Lie groups.
    By Proposition \ref{prop: resolution of MG},
    there is a distinguished triangle 
    \[ \uM(G) \to M(K^0) \to M(K^1) \overset{+1}{\to}\]
    in $D(\E_R)$.

    Consider the following diagram:
    \begin{equation}\label{eq: full faithfulness 5 lemma}
\begin{tikzcd}[ampersand replacement=\&,cramped]
	{\Hom(K^1,L)} \& {\Hom(M(K^1),M(L))} \\
	{\Hom(K^0,L)} \& {\Hom(M(K^0),M(L))} \\
	{\Hom(G,L)} \& {\Hom(\uM(G),M(L))} \\
	{\Ext^1_{\fpqc}(K^1,L)} \& {\Ext^1(M(K^1),M(L))} \\
	{\Ext^1_{\fpqc}(K^0,L)} \& {\Ext^1(M(K^0),M(L))}
	\arrow[from=1-1, to=1-2]
	\arrow[from=1-1, to=2-1]
	\arrow[from=1-2, to=2-2]
	\arrow[from=2-1, to=2-2]
	\arrow[from=2-1, to=3-1]
	\arrow[from=2-2, to=3-2]
	\arrow[from=3-1, to=3-2]
	\arrow[from=3-1, to=4-1]
	\arrow["{(\ast)}"{description}, draw=none, from=3-1, to=4-2]
	\arrow[from=3-2, to=4-2]
	\arrow["{\tilde M}", from=4-1, to=4-2]
	\arrow[from=4-1, to=5-1]
	\arrow[from=4-2, to=5-2]
	\arrow["{\tilde M}", from=5-1, to=5-2]
\end{tikzcd}
\end{equation}
where the columns are the long exact sequences obtained by applying $\Hom(-,L)$ and $\Hom(-,M(L))$ in the derived categories of fpqc abelian sheaves and $\E_R$-modules, respectively. We claim that the diagram commutes. 
All squares except $(\ast)$ follow from naturality.
For $(\ast)$: suppose that $f: G\to L$ is a homomorphism. Let $X$ be the pushout in the fpqc topology
\[
\begin{tikzcd}[cramped]
	0 & G & {K^0} & {K^1} & 0 \\
	0 & L & X & {K^1} & 0
	\arrow[from=1-1, to=1-2]
	\arrow[from=1-2, to=1-3]
	\arrow[from=1-2, to=2-2]
	\arrow[from=1-3, to=1-4]
	\arrow[from=1-3, to=2-3]
	\arrow[from=1-4, to=1-5]
	\arrow[from=1-4, to=2-4]
	\arrow[from=2-1, to=2-2]
	\arrow[from=2-2, to=2-3]
	\arrow["\lrcorner"{anchor=center, pos=0.125, rotate=180}, draw=none, from=2-3, to=1-2]
	\arrow[from=2-3, to=2-4]
	\arrow[from=2-4, to=2-5]
\end{tikzcd}.
\]
By Theorem \ref{theorem: alepho-Lie closed under extensions} and Corollary \ref{corollary: M is exact on FLGs}, $X$ is a formal Lie group and the sequence of Cartier modules $0 \to M(L) \to M(X) \to M(K^1) \to 0$ is exact. The composition $\Hom(G, L) \to \Ext^1_{\fpqc}(K^1, L) \to \Ext^1(M(K^1), M(L))$ sends $f$ to the class $[M(X)] \in \Ext^1_{\E_R}(M(K^1),M(L))$.
On the other hand, 
\[ 0 \to G \to K^0 \oplus L \to X \to 0\]
is exact in the fpqc topology, so Proposition \ref{prop: resolution of MG} shows that 
\[ \uM(G) \to M(K^0) \oplus M(L) \to M(X) \]
is a fiber sequence. Hence the composition $\Hom(G, L) \to \Hom(\uM(G), M(L)) \to \Ext^1(M(K^1), M(L))$ also sends $f$ to $[M(X)]$, showing that $(\ast)$ commutes.

By Cartier theory (Theorem \ref{theorem: cartier in countable type}) and Lemma \ref{lemma: cartier functor on ext1}, all horizontal arrows in \eqref{eq: full faithfulness 5 lemma} but the middle are isomorphisms. By the 5-lemma, the map
\[ \Hom(G,L) \to \Hom_{D(\E_R)}(\uM(G),M(L))\] 
is an isomorphism.

Now we prove the theorem. Let $0 \to H \to L^0 \to L^1 \to 0$ be a resolution of $H$ by formal Lie groups $L^0$ and $L^1$.
First, note that if $L$ is a formal Lie group, then
\[ \Ext^{-1}(\uM(G),M(L)) = 0.\]
For consider the long exact sequence
\[
\begin{tikzcd}[ampersand replacement=\&]
	\cdots \& {\Ext^{-1}(M(K^0),M(L))} \& {\Ext^{-1}(\uM(G),M(L))} \\
	{\Hom(M(K^1),M(L))} \& {\Hom(M(K^0),M(L))} \& \cdots
	\arrow[from=1-1, to=1-2]
	\arrow[from=1-2, to=1-3]
	\arrow[from=1-3, to=2-1]
	\arrow[from=2-1, to=2-2]
	\arrow[from=2-2, to=2-3]
\end{tikzcd}
\]
Since $M(K^0)$ and $M(L)$ are concentrated in degree 0, $\Ext^{-1}(M(K^0),M(L)) = 0$. Since $K^0 \to K^1$ is surjective in the fpqc topology and $M$ is fully faithful, $\Hom(M(K^1),M(L)) \to \Hom(M(K^0),M(L))$ is injective, so $\Ext^{-1}(\uM(G),M(L)) = 0$.
This being so, applying $R\Hom(\uM(G),-)$ to the triangle $\uM(H) \to M(L^0) \to M(L^1) \to^{+1}$ gives the diagram 
\[
\begin{tikzcd}[ampersand replacement=\&]
	0 \& {\Hom(G,H)} \& {\Hom(G,L^0)} \& {\Hom(G,L^1)} \\
	0 \& {\Hom(\uM(G),\uM(H))} \& {\Hom(\uM(G),M(L^0))} \& {\Hom(\uM(G),M(L^1))}
	\arrow[from=1-1, to=1-2]
	\arrow[from=1-1, to=2-1]
	\arrow[from=1-2, to=1-3]
	\arrow[from=1-2, to=2-2]
	\arrow[from=1-3, to=1-4]
	\arrow[from=1-3, to=2-3]
	\arrow[from=1-4, to=2-4]
	\arrow[from=2-1, to=2-2]
	\arrow[from=2-2, to=2-3]
	\arrow[from=2-3, to=2-4]
\end{tikzcd}
\]
The last two vertical arrows are isomorphisms, hence $\Hom(G,H) \to \Hom(\uM(G),\uM(H))$ is an isomorphism, as desired.
\end{proof}

\begin{example}\label{ex: oort-functor}
We may now exhibit why Oort's functor $M^{\Oort}$, assigning $G \in C(R)$ to $M^{\Oort}(G) = \coker(M(K^0) \to M(K^1))$ when $0 \to G \to K^0 \to K^1 \to 0$ is a resolution, is not fully faithful. By Proposition \ref{prop: resolution of MG}, $M^{\Oort}(G) = H^1(\uM(G))$.

    Let $R$ be an $\Fp$-algebra and $G = \alpha_p$.
    Since $\alpha_p$ has the resolution
    \[ 0 \to \alpha_p \to \Ghat_a \overset{F}{\to} \Ghat_a \to 0,\]
    the Cartier--Dieudonné module of $\alpha_p$ is 
    \[ \uM(\alpha_p) = [\E_R/\E_R F \overset{r_V}{\to} \E_R / \E_R  F],\]
    where $r_V$ is right multiplication by $V$. Thus, 
    \[M^{\Oort}(\alpha_p) = \E_R/(\E_R F + \E_R V).\]
    Using that every element of $\E_R$ has a unique representation of the form $\sum_{n,m}V^n[c_{n,m}]F^m$, it can be shown that $\End_{\E}(\E_R/(\E_R F + \E_R V))\cong R^p$ where $c^p \in R^p$ acts by $x \mapsto x[c^p]$.
    The map 
    \[ R= \End(\alpha_p) \to \End_{\E_R}(M^{\Oort}(\alpha_p))\]
    sends $c \in R$ to $c^p$, as $c: \alpha_p \to \alpha_p$ fits into the following diagram:
    
\[
\begin{tikzcd}[ampersand replacement=\&,cramped]
	0 \& {\alpha_p} \& \Ghat_a \& \Ghat_a \& 0 \\
	0 \& {\alpha_p} \& \Ghat_a \& \Ghat_a \& 0
	\arrow[from=1-1, to=1-2]
	\arrow[from=1-2, to=1-3]
	\arrow["c"', from=1-2, to=2-2]
	\arrow["F", from=1-3, to=1-4]
	\arrow["c"', from=1-3, to=2-3]
	\arrow[from=1-4, to=1-5]
	\arrow["{c^p}"', from=1-4, to=2-4]
	\arrow[from=2-1, to=2-2]
	\arrow[from=2-2, to=2-3]
	\arrow["F", from=2-3, to=2-4]
	\arrow[from=2-4, to=2-5]
\end{tikzcd}.
\]
Hence $M^{\Oort}$ is not fully faithful if $R$ is not reduced.

The argument in the proof of \cite[Lemma 4.1]{Oor74} relies on the assertion that $\Hom(\What,G) = 0$ when $G \in C(R)$,  which does not hold for general $R$. For example, when $R = \Fp[\lambda]/(\lambda^p)$, there is a nonzero map $\What \to \alpha_p$ given by $(x_0,x_1,\ldots) \mapsto \lambda x_0$.     
    Thus the exactness claimed in \cite[Lemma 4.1]{Oor74} fails, and the proof of full faithfulness in \cite[Theorem 4.2]{Oor74} fails similarly.
\end{example}

\subsection{Essential image of the Cartier--Dieudonné functor}

The goal of this subsection is to identify the essential image of the functor $\uM: C(R) \to D(\E_R)$. By Proposition \ref{prop: resolution of MG}, every such object is equivalent to a two-term complex of Cartier modules of formal Lie groups. Thus, we begin with a criterion for identifying such complexes. 

\begin{definition}
If $X \in D(\E_R)$ and $n \geq 1$, the derived quotient $X/^{\bb{L}}V^nX$ is defined to be
\be
X/^{\bb{L}}V^nX := (\E_R/V^n\E_R)\Lotimes{\E_R} X \in D(\Ab).
\ee
$X$ is \emph{derived $V$-complete} if the natural map $X \to R\ilim_n X/^{\bb{L}}V^nX$ is an equivalence in $D(\Ab)$.
\end{definition}

By construction, a finite complex $X$ of Cartier modules of formal Lie groups over $R$ is derived $V$-complete and $X/^{\bb{L}}VX  \in \Perf(R)$. We will show these conditions are also sufficient in Theorem \ref{theorem: cartier approximation}.

Note that if $M$ is an $\E_R$-module, then $M/V^n M$ has a $W_n(R)$-module structure, as $W(R)V \subseteq VW(R)$ and $v^n(W(R)) \subseteq V^n\E_R$ in $\E_R$.
The operator $F$ is not necessarily defined on $M/V^nM$, as $FV^n$ is an element of $V^{n-1}\E_R$ but not necessarily $V^n \E_R$.
However, the condition $FV^nM \subseteq V^{n-1}M$ allows us to define the action of $F$ on the $V$-adic completion $\ilim_n M/V^nM$. 
Moreover, if $x \in \E_R$, then $x$ also acts on the $V$-adic completion as follows: given $n$, write 
\[x = \sum_{i<n, j} V^i[c_{ij}]F^j + V^ny\]
for some $y \in \E_R$. If $d = \max \{j \mid \text{there is $i < n$ such that $c_{ij}\neq 0$}\}$, then $xV^{d+n} \in V^n \E_R$. Thus multiplication by $x$ descends to $M/V^{d+n}M \to M/V^nM$, which in the limit defines multiplication by $x$ on $\ilim_n M/V^nM$. This defines an $\E_R$-module structure on $\ilim_n M/V^nM$.
The natural map $M \to \ilim_n M/V^nM$ is then a homomorphism of $\E_R$-modules.

Now suppose that $X \in D(\E_R)^{\leq 0}$. Represent $X$ by a complex of free $\E_R$-modules $X^\bullet$ concentrated in degrees $\leq 0$.
Then the termwise completion $X^{\bullet\wedge}$ has underlying abelian group $R\ilim_n X^\bullet/^{\mathbb{L}} V^n X^\bullet$, and $X$ is derived $V$-complete if and only if the natural map $X^\bullet \to X^{\bullet\wedge}$ is a quasi-isomorphism. 
The object $X^\wedge$ represented by $X^{\bullet\wedge}$ in $D(\E_R)$ does not depend on the choice of complex $X^\bullet$ representing $X$.
Note that $X/^{\bb{L}}V^nX$ is not an $\E_R$-module in general, but $X^\wedge$ is.

\begin{lemma}\label{lemma: one-step cartier approximation}
    Suppose that $X \in D(\E_R)^{\leq 0}$ is derived $V$-complete.
    Let $x_1,\ldots, x_r$ be generators for $H^0(X/^{\bb{L}}VX)$ as an $R$-module.
    Then there exists a Cartier module of a formal Lie group $C$ where $C/VC$ is a free $R$-module of rank $r$ with basis $e_1,\ldots, e_r$
    and a map $C \to X$ in $D(\E_R)$ such that the induced map $C/VC = H^0(C/^{\bb{L}}VC) \to H^0(X/^{\bb{L}}VX)$ sends $e_i \mapsto x_i$.
\end{lemma}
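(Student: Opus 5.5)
We plan to build $C$ from the structure equations of Proposition \ref{prop: structure equations}. The strategy is to first lift each $x_i$ to an element of $H^0(X)$ modulo higher $V$-powers, then correct it into an element $y_i \in H^0(X)$ satisfying an equation of the form $F y_i = \sum_{n,j} V^n[c_{i,n,j}] y_j$. Such a family defines a map from $\coker(\varphi)$, where $\varphi(e_i) = Fe_i - \sum V^n[c_{i,n,j}]e_j$, to $X$. Since $X \in D(\E_R)^{\leq 0}$, a map $C \to X$ from a module $C$ with projective resolution $0 \to \E_R^r \xrightarrow{\varphi} \E_R^r \to C \to 0$ amounts to an element $y \in H^0(X)^r$ whose image under $\varphi^*$ vanishes in $H^0(X)^r$. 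Indeed, $\Hom(C,X)$ sits in an exact sequence $0 \to \Hom(C, X) \to H^0(X)^r \xrightarrow{\varphi^*} H^0(X)^r$, because $\Ext^{-1}(\E_R^r,X) \to \Hom(C,X)$ has source $H^{-1}(X)^r$ coming from the first term; more precisely we get a surjection onto the kernel, which is enough for existence.

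\textbf{Step 1: surjectivity of $H^0(X) \to H^0(X/^{\bb{L}}VX)$.} Since $X \in D^{\leq 0}$ and $\E_R/V\E_R$ has the two-term resolution by right multiplication by $V$, we get $H^0(X/^{\bb{L}}VX) = H^0(X)/VH^0(X)$. Hence each $x_i$ lifts to some $z_i \in H^0(X)$.

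\textbf{Step 2: the derived $V$-complete module $H^0(X)$.} Derived $V$-completeness of $X$, together with $X \in D^{\leq 0}$, should imply that $M := H^0(X)$ is derived $V$-complete as well, since the top cohomology of a derived complete object is derived complete. We then need the following claim: for a derived $V$-complete $\E_R$-module $M$, if $M/VM$ is generated by the images of $z_1,\dots,z_r$, then every $m \in M$ can be written as $\sum_{n \geq 0} \sum_j V^n [a_{n,j}] z_j$, with the infinite sum making sense in $M$.

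To justify convergent sums in $M$ we use the completion map $M \to M^\wedge$. Alternatively, we use derived completeness to produce a map $\widehat{\bigoplus}\,\E_R \to M$, defined via sequences, that is surjective by a derived Nakayama argument. Concretely, consider the map $\E_R^r \to M$, $e_j \mapsto z_j$. Its cokernel $Q$ satisfies $Q/VQ = 0$ and is derived $V$-complete, being the cokernel of a map of derived complete modules, since derived complete modules form an abelian subcategory. Derived Nakayama then gives $Q = 0$, so $\E_R^r \to M$ is surjective. This is cleaner than handling infinite sums directly.

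\textbf{Step 3: structure equations.} By Step 2, $Fz_i = \sum_j \epsilon_{ij} z_j$ for some $\epsilon_{ij} \in \E_R$. We write $\epsilon_{ij} = \sum_{n,m} V^n [r] F^m$ and want to reduce to the form $\sum_n V^n[c_{i,n,j}]$ with no $F$'s, replacing the generators if necessary. This is the main obstacle, and there are two routes.

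\emph{Route A (preferred):} enlarge the generating set to $\{F^k z_i\}$ and use the freedom in the presentation. Since $M/VM$ is only required to receive a map, with no injectivity needed, we may still take exactly $r$ generators by proceeding as follows. Use the defining relation $FV = p$ and Zink's argument in \cite[Theorem 4.39]{Zin84}, which shows that any element $\epsilon$ can be rewritten by successive approximation in the $V$-adic topology. Terms $V^n[a]F^m$ with $m \geq 1$ and $n \geq 1$ are handled by rewriting $F z_j$ via the equation itself. This recursion converges $V$-adically because each substitution increases the $V$-degree. The terms with $n = 0$ and $m \geq 1$ are the problem.

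\emph{Route B (fallback):} avoid the rewriting entirely by mapping from a free-type module. We take $C = \E_R^r$ modulo nothing; this fails because $C$ must be a Cartier module of a formal Lie group with $C/VC$ free of rank $r$. Instead, we can use the Cartier module of $\What^{\oplus r}$, which is $\E_R^r$, but $\E_R/V\E_R$ is not finite over $R$. So we must use Route A.

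To make Route A work, we first modify the $z_i$ by elements of $VM$ so that the $F$-relation has the desired form. The equation $Fz_i = \sum_j \epsilon_{ij}z_j$ can be solved recursively: we seek $y_i = z_i + V w_i$ with $F y_i = \sum_{n,j} V^n[c_{i,n,j}] y_j$ and solve for $w_i$ modulo $V^k$ inductively, using Step 2 at each stage to express the error term. Then $y \in \ker\varphi^*$, which yields a map $C = \coker\varphi \to X$ sending $e_i \mapsto y_i \equiv x_i$ mod $V$. Proposition \ref{prop: structure equations} guarantees that $C$ is a Cartier module with $C/VC$ free on $e_1,\dots,e_r$.
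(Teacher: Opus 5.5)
Your reduction is sound: it suffices to find $y\in H^0(X)^r$ with $y_i\equiv x_i$ modulo $V$ and $\varphi^*(y)=0$, since then $\coker(\varphi)\to X$ exists. The gap is that you never produce such a $y$, and that is the whole content of the lemma. Two things go wrong in Step 3.

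First, the ``$F$-terms'' obstacle is self-inflicted. By passing to surjectivity of $\E_R^r\to M$, where $M=H^0(X)$, you discard the hypothesis that the $x_j$ generate $M/VM=H^0(X/^{\mathbb L}VX)$ as an $R$-module. Using that hypothesis, $Fz_i=\sum_j[c_{i,0,j}]z_j+Vm_{i,1}$ for some $m_{i,1}\in M$. Applying the same step to $m_{i,1},m_{i,2},\dots$ yields coefficients $V^n[c_{i,n,j}]$ with no powers of $F$ at all. Neither the rewriting in Route A nor the modification $y_i=z_i+Vw_i$ is needed.

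Second, and more seriously, solving ``modulo $V^k$ inductively'' gives at best $\varphi^*(y)\in\bigcap_kV^kM^r$. A derived $V$-complete module need not be $V$-adically separated, so this does not imply $\varphi^*(y)=0$. You supply no further argument.

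The paper avoids this by never working in $H^0(X)$. It represents $X$ by a complex $X^\bullet$ of free $\E_R$-modules in degrees $\leq 0$ and passes to the termwise completion $X^{\bullet\wedge}$. This completion is quasi-isomorphic to $X^\bullet$ by derived completeness, and its terms are genuinely $V$-adically complete and separated. There it constructs the structural equation $\varphi$ together with $f:\E_R^r\to X^{-1,\wedge}$, where $g(e_i)$ lifts $x_i$, such that $df-g\varphi\in V^nX^{0,\wedge}$ for all $n$. At each stage it corrects modulo $V^{n+1}X^{0,\wedge}+dV^nX^{-1,\wedge}$, using the isomorphism $V^n:X^\bullet/VX^\bullet\cong V^nX^\bullet/V^{n+1}X^\bullet$. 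Separatedness of $X^{0,\wedge}$ then forces $df=g\varphi$.

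Alternatively, you can repair your own argument by keeping the remainders exact. Set $m_{i,0}=Fz_i$ and $m_{i,k}=\sum_j[c_{i,k,j}]z_j+Vm_{i,k+1}$. Then the elements
$u_k=m_{i,k}-\sum_j\bigl(\sum_{n\geq k}V^{n-k}[c_{i,n,j}]\bigr)z_j$
satisfy $u_k=Vu_{k+1}$ and $u_0=\varphi^*(z)_i$. Since $X/^{\mathbb L}V^nX$ is the cone of $V^n$ on the underlying complex, derived $V$-completeness is exactly derived $T$-completeness over $\Z[T]$, with $T$ acting by $V$. This identification also justifies the commutative-algebra facts you quote in Step 2. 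It follows that $M$ is derived complete, so $\lim(\cdots\xrightarrow{V}M\xrightarrow{V}M)=0$ and hence $u_0=0$. Thus $y=z$ works, with $\varphi(e_i)=Fe_i-\sum_{n,j}V^n[c_{i,n,j}]e_j$.
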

\begin{proof}
    Represent $X$ by a complex of free $\E_R$-modules $X^\bullet$ concentrated in degrees $\leq 0$. Since $X$ is derived $V$-complete, $X^\bullet \to X^{\bullet\wedge}$ is a quasi-isomorphism. We will construct a commutative diagram
    \[
\begin{tikzcd}[ampersand replacement=\&,cramped]
	{\E_R^r} \& {X^{-1,\wedge}} \\
	{\E_R^r} \& {X^{0,\wedge}}
	\arrow["f"', from=1-1, to=1-2]
	\arrow["\varphi"', from=1-1, to=2-1]
	\arrow["d", from=1-2, to=2-2]
	\arrow["g"', from=2-1, to=2-2]
\end{tikzcd}
\]
where, if $e_1,\ldots, e_r$ is the standard basis for $\E_R^r$, $\varphi$ is of the form
\[ \varphi(e_i) = Fe_i - \sum_{n,j} V^n[c_{i,n,j}]e_j.\]
    Then $\varphi$ is a structural equation, so by Proposition \ref{prop: structure equations}, $\varphi$ is injective and $C := \coker(\varphi)$ is a Cartier module of a formal Lie group. This gives a map $C \to X^\wedge$; composing with the inverse of the quasi-isomorphism $X \to X^\wedge$ in $D(\E_R)$ gives the desired map $C \to X$.

Begin by picking $g(e_i)$ to be any lift of $x_i \in H^0(X/^{\bb{L}}VX) = X^0/(dX^{-1} + VX^0)$ to $X^0$.
We will inductively construct a sequence of $\E_R$-linear maps $\varphi_n : \E_R^r \to \E_R^r$ and $f_n: \E_R^r \to X^{-1, \wedge}$ such that $\varphi_n$ is a structural equation, 
$\varphi_n(e_i)$ and $f_n(e_i)$ converge in the $V$-adic topology for all $i$ and 
\[ df_n(e_i) - g\varphi_n(e_i) \in V^{n}X^{0,\wedge}.\]
Then defining $\varphi(e_i) = \ilim_n \varphi_n(e_i)$ and $f(e_i) = \ilim_n f_n(e_i)$ gives the desired construction.

For the base case, note that we may write the image of $Fg(e_i)$ in $H^0(X/^{\bb{L}}VX)$ as $\sum_j c_{i,0,j} x_j$ for some $c_{i,0,j} \in R$.
Define $\varphi_1(e_i) = Fe_i - \sum_j [c_{i,0,j}]e_j$;
then $g\varphi_1(e_i) \in VX^{0,\wedge} + dX^{-1,\wedge}$.
Pick $f_1(e_i)$ to be any element of $X^{-1,\wedge}$ such that 
\[ df_1(e_i) - g\varphi_1(e_i) \in V X^{0,\wedge}.\]
Now suppose that we are given $f_n$ and $\varphi_n$ such that $df_n(e_i) - g\varphi_n(e_i) \in V^nX^{0,\wedge}$.
Since $X^\bullet$ is a free $\E_R$-module in each degree, $V^n: X^\bullet/VX^\bullet \to V^nX^\bullet/V^{n+1}X^\bullet$ is an isomorphism of complexes of abelian groups. Hence there exists $c_{i,n,j} \in R$ such that 
\[ 
    df_n(e_i) - g\varphi_n(e_i) \equiv -\sum_j V^n [c_{i,n,j}]g(e_j) \mod V^{n+1}X^{0,\wedge} + dV^nX^{-1,\wedge}.
\]
Set $\varphi_{n+1}(e_i) = \varphi_n(e_i) - \sum_j V^n[c_{i,n,j}]e_j$.
Then there exist $V^n \alpha_i \in V^n X^{-1,\wedge}$ such that 
\[ df_n(e_i) - g\varphi_{n+1}(e_i) \equiv dV^n \alpha_i \mod V^{n+1}X^{0,\wedge}.\]
Define 
$f_{n+1}(e_i) = f_n(e_i) - V^n\alpha_i$. This gives the required construction.
\end{proof}

\begin{theorem}\label{theorem: cartier approximation}
    Suppose $X \in D(\E_R)$ satisfies
    \begin{enumerate}
        \item $X$ is derived $V$-complete;
        \item $X/^{\mathbb L}VX$ is a perfect complex of $R$-modules of tor-amplitude $[a,b]$.
    \end{enumerate}
    Then $X$ is equivalent to a complex of Cartier modules of formal Lie groups in degrees $[a,b]$.
\end{theorem}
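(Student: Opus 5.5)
We argue by induction on the length $b-a$ of the tor-amplitude interval, building the complex of Cartier modules from the top degree down. Lemma \ref{lemma: one-step cartier approximation} is the basic step: it kills the top cohomology of $X/^{\mathbb L}VX$ by a map from a single Cartier module. We use two standard facts throughout. First, $V$-completeness is detected on $X/^{\mathbb L}VX$ by derived Nakayama: if $X$ is derived $V$-complete and $X/^{\mathbb L}VX=0$, then $X=0$. Second, $(-)/^{\mathbb L}V$ is triangulated, and for a Cartier module $C$ of a formal Lie group we have $C/^{\mathbb L}VC = C/VC$, since $V$ is injective on $C$.

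First we reduce to the case $b=0$ by shifting. We claim $X\in D(\E_R)^{\le 0}$. Set $Y=\tau^{>0}X$. The object $\tau^{\le 0}X$ is derived $V$-complete: derived completeness can be tested on cohomology, and the truncation of a complete object has complete cohomology. Hence $Y$ is complete as well. The triangle shows $Y/^{\mathbb L}VY$ has no cohomology above degree $0$ except what comes from $X/^{\mathbb L}VX$, which vanishes there. Let $n$ be the top nonzero degree of $Y$. Then $H^n(Y/^{\mathbb L}VY)=H^n(Y)/VH^n(Y)$, and this also vanishes. Derived Nakayama, applied to the complete module $H^n(Y)$, gives $H^n(Y)=0$. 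So $Y=0$.

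Now take $b=0$. Since $X/^{\mathbb L}VX$ is perfect, $H^0(X/^{\mathbb L}VX)$ is finitely generated, say by $x_1,\dots,x_r$. Apply Lemma \ref{lemma: one-step cartier approximation} to get $C^0\to X$ that is surjective on $H^0(-/^{\mathbb L}V)$. Let $X'=\Fib(C^0\to X)$. It is derived $V$-complete, because $C^0$ is $V$-reduced and hence complete. The triangle $X'/^{\mathbb L}V\to C^0/VC^0\to X/^{\mathbb L}VX$ shows that $X'/^{\mathbb L}VX'$ is perfect with tor-amplitude $[a,0]$. Here $C^0/VC^0$ is free in degree $0$, and the surjectivity on $H^0$ implies the fiber has amplitude in $[a,0]$; this is the usual argument that the kernel of a surjection from a projective onto the top term keeps tor-amplitude.

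Next we lower the top degree so the induction actually shrinks the interval. We replace $C^0$ by $C^0\oplus C''$ so that the map on $H^0$ becomes split, which makes $X'/^{\mathbb L}V$ have tor-amplitude $[a,-1]$. Concretely, $H^0(X'/^{\mathbb L}V)$ is finitely generated projective, and we may make it zero after enlarging. When $a=b=0$, $H^0(X'/^{\mathbb L}V)$ is the projective kernel $P$ of $R^r\to H^0(X/^{\mathbb L}V)$, and $X'/^{\mathbb L}V\simeq P$. In general we handle the top cohomology using the $[a,b]$ flexibility instead. Since $X'$ lies in $D^{\le 0}$ with $X'/^{\mathbb L}V$ of amplitude $[a,0]$, we apply the induction in the other direction: peel off the bottom degree instead of the top. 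So we run the induction from degree $a$ upward. We use duality-free reasoning: at the last step, when $X''/^{\mathbb L}V$ is a finite projective $P$ concentrated in one degree, the case $b=a$ must be proved directly.

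The case $b=a=0$ is the key base case. Complete $P\oplus Q$ to a free module, as in the proof of Lemma \ref{lemma: tor vanishing}, and apply the lemma to $X\oplus M_Q$, where $M_Q$ is the Cartier module of the formal vector group on $Q$ from that proof. This produces $C\to X\oplus M_Q$ inducing an isomorphism mod $V$. By derived Nakayama it is an isomorphism, so $X\oplus M_Q$ is a Cartier module $N$ of a formal Lie group. Then $X$ is the kernel of the split surjection $N\to M_Q$. Its mod-$V$ reduction is $P$, and it is $V$-reduced, so by Theorem \ref{theorem: cartier in countable type} it is again a Cartier module.

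Finally we run the induction. From $C^0\to X$ with fiber $X'$ of amplitude $[a,0]$, and given that $H^0(X'/^{\mathbb L}V)$ can be absorbed, we choose $C^0$ so that $R^r\to H^0(X/^{\mathbb L}V)$ has projective kernel $P$. This is possible because the top cohomology of a tor-amplitude $[a,0]$ perfect complex is finitely presented; if it is not projective, the kernel is handled at the next stage. We then apply induction to $X'[-1]$ shifted, realizing $X'$ as a complex $C^{a}\to\cdots\to C^{0\prime}$, and the cone gives $X$ as a complex in degrees $[a,0]$. The main obstacle is this bookkeeping of amplitudes, so that the length genuinely drops: ensuring that $X'/^{\mathbb L}V$ has amplitude $[a,-1]$ up to a projective piece in degree $0$, which is then merged with $C^0$ via the base case.
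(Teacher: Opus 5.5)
\textbf{There is a genuine gap in the inductive step: the amplitude bookkeeping is wrong, so the induction as written does not close.}

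Two parts of your proposal are sound. The reduction to $X\in D(\E_R)^{\le 0}$ works, with one repair: do not pick a top nonzero degree of $Y=\tau^{>0}X$, since $X$ need not be bounded above. Instead note that $H^n(Y)/VH^n(Y)\hookrightarrow H^n(Y/^{\mathbb L}VY)=0$ for every $n\ge 1$. The base case $a=b=0$ also works: add $M_Q$, apply Lemma~\ref{lemma: one-step cartier approximation} to get an isomorphism mod $V$, use derived Nakayama, then pass to the summand. This is a valid alternative to the paper's direct argument via surjective transition maps of $X/^{\mathbb L}V^nX$.

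In the inductive step you miss the key observation. For $X'=\Fib(C^0\to X)$,
\[
X'/^{\mathbb L}VX'=\Fib\bigl(C^0/VC^0\to X/^{\mathbb L}VX\bigr).
\]
Taking the fiber shifts $X/^{\mathbb L}VX$ up by one degree, so this complex has tor-amplitude in $[a+1,1]$. Surjectivity on $H^0$ kills $H^1$, so the tor-amplitude is in fact $[a+1,0]$. The interval therefore already shrinks at the bottom. Induction presents $X'$ as a complex of Cartier modules in degrees $[a+1,0]$, and $X=\Cofib(X'\to C^0)$ then lies in degrees $[a,0]$. This is exactly the paper's argument, and nothing further is needed.

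Instead, you record only $[a,0]$ and try to push the top degree down to $[a,-1]$, using claims that are false in general:
\begin{enumerate}
\item $H^0(X'/^{\mathbb L}VX')$ need not be projective.
\item Enlarging $C^0$ enlarges this $H^0$ rather than killing it.
\item One cannot always choose $R^r\to H^0(X/^{\mathbb L}VX)$ with projective kernel. For example, over $R=\mathbb{F}_p[\epsilon]/(\epsilon^2)$ with $X/^{\mathbb L}VX\simeq[R\xrightarrow{\epsilon}R]$, one has $H^0=R/\epsilon$, which has infinite projective dimension.
\end{enumerate}
The target $[a,-1]$ is also off by one in the wrong direction: if $X'$ had amplitude $[a,-1]$, then $\Cofib(X'\to C^0)$ would occupy degrees $[a-1,0]$. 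Finally, the idea of ``peeling off the bottom degree'' has no mechanism behind it, since Lemma~\ref{lemma: one-step cartier approximation} only addresses $H^0$. The missing ingredient is the one-line observation that the fiber has amplitude $[a+1,0]$.
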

\begin{proof}
    Without loss of generality, $b=0$. We induct on $a\leq 0$.
    If $a = 0$, then $X/^{\mathbb L}VX = \E_R/V\E_R \Lotimes{\E_R} X$  is a finitely generated projective $R$-module \cite[\href{https://stacks.math.columbia.edu/tag/0658}{Tag 0658}]{stacks-project}.
    Now for each $n$ there is an exact triangle
    \begin{equation}\label{eq: v-adic filtration}
        V^n\E_R/V^{n+1}\E_R \to \E_R/V^{n+1}\E_R \to \E_R/V^n\E_R \overset{+1}{\to}
    \end{equation}
    of right $\E_R$-modules,
    and $V^n: \E_R/V\E_R \to V^n\E_R/V^{n+1}\E_R$ is an isomorphism of right $\E_R$-modules.
    Tensoring this triangle with $X$ and inducting on $n$ shows
    $X/^{\mathbb L}V^nX$ is concentrated in degree 0 for all $n$ and the inverse system $\{X/^{\bb{L}}V^nX\}_{n \geq 0}$ has surjective transition maps. Hence $X$ is concentrated in degree 0.
    Now tensor the triangle $V\E_R \to \E_R \to \E_R/V\E_R \overset{+1}{\to}$ with $X$.
    Since $X/^{\mathbb L}VX$ is concentrated in degree $0$, we conclude $V: X \to X$ is injective.
    Since the transition maps in the system $\{X/^{\bb{L}}V^nX\}_{n \geq 0}$ are surjective, the derived limit $R\ilim_n X/^{\mathbb L}V^nX$ is the (non-derived) limit $\ilim_n X/V^nX$; since $X$ is derived $V$-complete, $X \to \ilim_n X/V^nX$ is an isomorphism. Thus $X$ is $V$-reduced; since $X/VX$ is a finitely generated projective $R$-module, Cartier theory (Theorem \ref{theorem: cartier in countable type}) implies that $X$ is the Cartier module of a formal Lie group.
    
    Now suppose $a < 0$. As before, tensoring with \eqref{eq: v-adic filtration} shows that $X \in D(\E_R)^{\leq 0}$. By Lemma \ref{lemma: one-step cartier approximation}, there exists a Cartier module of a formal Lie group $C$ and a map $C \to X$ such that $H^0(C/^{\bb{L}}VC) \to H^0(X/^{\bb{L}}VX)$ is surjective. Let $Y = \Fib(C \to X)$. Since $C$ and $X$ are derived $V$-complete, so is $Y$. Further, $Y/^{\bb{L}}VY = \Fib(C/^{\bb{L}}VC \to X/^{\bb{L}}VX)$ is a perfect complex of $R$-modules; since $H^0(C/^{\bb{L}}VC) \to H^0(X/^{\bb{L}}VX)$ is surjective, $Y/^{\bb{L}}VY$ has tor-amplitude in $[a+1,0]$.
    By induction, $Y$ is equivalent to a complex of Cartier modules of formal Lie groups in degrees $[a+1,0]$; then $X = \Cofib(Y\to C)$ is equivalent to a complex of Cartier modules of formal Lie groups in degrees $[a,0]$.
\end{proof} 

\begin{theorem}[Essential image] \label{theorem: essentially surjective}
    Let $R$ be a $p$-nilpotent ring.
    Then $M$ is in the essential image of $\uM: C(R) \to D(\E_R)$ if and only if
    \begin{enumerate}
        \item $M$ is derived $V$-complete;
        \item $M/^{\bb{L}}VM$ is a perfect complex of $R$-modules of tor-amplitude $[0,1]$;
        \item $L\Div\left(\E_{\ol{R}} \Lotimes{\E_R} M\right) = 0$.
    \end{enumerate}
\end{theorem}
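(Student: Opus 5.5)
Both directions reduce to the structure of two-term complexes of Cartier modules, using Proposition \ref{prop: resolution of MG}, Theorem \ref{theorem: cartier approximation} and Proposition \ref{proposition: derived isogenies}.

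\emph{Necessity.} Let $G \in C(R)$ and take the B\'egueri resolution $0 \to G \to K \to L \to 0$ of Corollary \ref{corollary: begueri for connected}. By Proposition \ref{prop: resolution of MG}, $\uM(G) \cong \Fib(M(K) \to M(L))$.
\begin{enumerate}
\item Cartier modules of formal Lie groups are $V$-reduced. In particular they are $V$-torsion free and classically $V$-complete, hence derived $V$-complete, and the fiber of two such modules is again derived $V$-complete.
\item We have $M(K)/^{\bb{L}}VM(K) = M(K)/VM(K)$, a finite projective $R$-module, and likewise for $L$. The fiber therefore has $\uM(G)/^{\bb{L}}V\uM(G)$ represented by the two-term complex $[M(K)/VM(K) \to M(L)/VM(L)]$ in degrees $0,1$, which is perfect of tor-amplitude $[0,1]$.
\item Since $K \to L$ is an fpqc epimorphism with kernel $G$ finite locally free, it is an isogeny. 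Proposition \ref{proposition: derived isogenies} then gives $L\Div(\E_{\ol{R}} \Lotimes{\E_R} \uM(G)) = 0$.
\end{enumerate}

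\emph{Sufficiency.} Let $M$ satisfy conditions (i)--(iii). Theorem \ref{theorem: cartier approximation} with $[a,b]=[0,1]$ represents $M$ by a complex $[C^0 \overset{\delta}{\to} C^1]$, where $C^0, C^1$ are Cartier modules of formal Lie groups. By Theorem \ref{theorem: cartier in countable type}, $C^i = M(K^i)$ and $\delta = M(f)$ for a homomorphism $f: K^0 \to K^1$ of formal Lie groups; finiteness of dimension holds because $C^i/VC^i$ is finite projective. Then $M = \Fib(M(K^0) \to M(K^1))$, and condition (iii) together with Proposition \ref{proposition: derived isogenies} shows that $f$ is an isogeny. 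Hence $G := \ker f$ is finite locally free, and $0 \to G \to K^0 \to K^1 \to 0$ is fpqc exact. Since $G$ is a closed subgroup of the formal Lie group $K^0$, Proposition \ref{proposition: connected conditions} gives $G \in C(R)$. Finally, Proposition \ref{prop: resolution of MG} yields $\uM(G) \cong M$.

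\emph{Main obstacle.} The most delicate point is the application of Theorem \ref{theorem: cartier approximation}. Its proof normalizes to $b = 0$ by a shift, so for $[0,1]$ one applies it to $M[1]$ (tor-amplitude $[-1,0]$) and shifts back. One then has to check that the resulting object is a genuine two-term complex of Cartier modules, i.e. a map $\delta$ in the abelian category and not merely a morphism in $D(\E_R)$. This holds because the inductive construction builds $M$ as a cofiber of a map between Cartier modules, and such a cofiber is represented by the honest map of modules. The remaining steps are direct citations of the results above.
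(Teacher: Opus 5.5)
Your proposal is correct and follows the paper's proof essentially step for step. Necessity goes through a two-term resolution, Proposition \ref{prop: resolution of MG} and Proposition \ref{proposition: derived isogenies}, and sufficiency goes through Theorem \ref{theorem: cartier approximation}, Cartier theory, and Proposition \ref{proposition: derived isogenies} to recognize $f$ as an isogeny. The details you add are sound and spell out steps the paper leaves implicit: derived $V$-completeness of $V$-reduced modules, the shift to tor-amplitude $[-1,0]$, the observation that a map in $D(\E_R)$ between modules in degree $0$ is an honest module map, and the use of Proposition \ref{proposition: connected conditions} to get $\ker f \in C(R)$.
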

\begin{proof}
    Suppose $M = \uM(G)$ for $G \in C(R)$. If $0 \to G \to K^0 \to K^1 \to 0$ is a resolution of $G$ by formal Lie groups, then by Proposition \ref{prop: resolution of MG}, $\uM(G) \simeq [M(K^0) \to M(K^1)]$ is equivalent to a two-term complex of Cartier modules, so the first two properties hold. Since $K^0 \to K^1$ is an isogeny, Proposition \ref{proposition: derived isogenies} implies $L\Div(\E_{\ol{R}} \Lotimes{\E_R} M) = 0$.

    Conversely, suppose $M$ is derived $V$-complete and $M/^{\bb{L}}VM$ is a perfect complex of $R$-modules of tor-amplitude $[0,1]$. 
    By Theorem \ref{theorem: cartier approximation}, $M \simeq [M(K^0) \to M(K^1)]$ where $K^0$ and $K^1$ are formal Lie groups over $R$. Let $f: K^0 \to K^1$ be the corresponding map.
    Since $L\Div(\E_{\ol{R}} \Lotimes{\E_R} M) = 0$, Proposition \ref{proposition: derived isogenies} shows $f$ is an isogeny.  Thus there is a short exact sequence
    \[ 0 \to G \to K^0 \to K^1 \to 0\]
    in the fpqc topology where $G \in C(R)$, so Proposition \ref{prop: resolution of MG} implies $\uM(G) \simeq [M(K^0) \to M(K^1)] \simeq M$.
\end{proof}

Combining Theorems \ref{theorem: fully faithful} and \ref{theorem: essentially surjective} completes the proof of Theorem \ref{theorem: main}.

\section{Properties of the Cartier--Dieudonné functor}

\subsection{The $V$-adic Filtration on $\uM(G)$} 
Fix $G \in C(R)$ and set $M := \uM(G)$.  In this subsection, we study the quotients $M_n := M/^{\bb{L}}V^nM$ in the derived category of $W(R)$-modules.  By tensoring the exact triangle
\be
V^n\E_R/V^{n+1}\E_R \to \E_R/V^{n+1}\E_R \to \E_R/V^n\E_R \overset{+1}\to 
\ee
of right $\E_R$-modules with $M$ we obtain an exact triangle
\be
(M_1)|_{\sigma^n} \overset{V^n}\to M_{n+1} \to M_n \overset{+1}\to
\ee
in $D(W(R))$, where $|_{\sigma^n}$ is the restriction of scalars by the $n$-th Frobenius $\sigma^n: W(R) \to W(R)$. Since $M_1$ has tor-amplitude in $[0, 1]$, we find by induction and the long exact sequence in cohomology that $H^i(M_n) = 0$ for $i \notin \{0, 1\}$.

Via Cartier duality \cite[Proposition 4.4.1]{AM25}, $M \cong \tau^{\leq 1} R\Hom(G^\vee, W)$, where $W$ is the ring scheme of $p$-typical Witt vectors. The short exact sequence 
\be
0 \to W|_{\sigma^n} \overset{V^n}\to W \to W_n \to 0
\ee
yields a fiber sequence
\be
R\Hom(G^\vee, W)|_{\sigma^n} \overset{V^n}\to R\Hom(G^\vee, W) \to R\Hom(G^\vee, W_n) \overset{+1}\to.
\ee
Therefore the canonical map from the cone of a truncation to the truncation of a cone produces a canonical map
\be
f_n: M_n \to \tau^{\leq 1} R\Hom(G^\vee, W_n)
\ee
which induces an isomorphism $H^0(M_n) \simeq \Hom(G^\vee, W_n)$ and an injection $H^1(M_n) \hookrightarrow \Ext^1(G^\vee, W_n)$.

\begin{proposition} \label{proposition: truncation RHom}
The canonical map $f_n: M_n \to \tau^{\leq 1} R\Hom(G^\vee, W_n)$ is an isomorphism in $D(W(R))$.
\end{proposition}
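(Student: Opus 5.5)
Since $f_n$ already induces an isomorphism on $H^0$ and an injection on $H^1$, and both sides vanish outside degrees $0,1$, it suffices to show that $H^1(M_n)\to \Ext^1(G^\vee,W_n)$ is surjective.

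First I would reduce to $n=1$ by induction on $n$. The short exact sequences $0\to W_1|_{\sigma^n}\overset{V^n}{\to} W_{n+1}\to W_n\to 0$ give a map of long exact sequences: the top row is the cohomology of the triangle $(M_1)|_{\sigma^n}\to M_{n+1}\to M_n\overset{+1}{\to}$, and the bottom row is obtained by applying $\Hom(G^\vee,-)$. The maps $f_n$ are compatible with both triangles, since both come from the same fiber sequence on $W$ by taking the cone of truncations. In degree $1$ the relevant segment of the top row is
\[ H^1(M_1)|_{\sigma^n}\to H^1(M_{n+1})\to H^1(M_n)\to 0. \]
The vanishing on the right holds because $H^2(M_1)=0$. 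The bottom row is
\[ \Ext^1(G^\vee,W_1)\to\Ext^1(G^\vee,W_{n+1})\to\Ext^1(G^\vee,W_n)\to \Ext^2(G^\vee,W_1). \]
Suppose surjectivity holds for $M_1$ and $M_n$, and the $H^0$ maps are isomorphisms. Then a four-lemma chase shows $H^1(M_{n+1})\to \Ext^1(G^\vee,W_{n+1})$ is surjective: lift an element's image in $\Ext^1(G^\vee,W_n)$ to $H^1(M_{n+1})$, then correct the difference, which lies in the image of $\Ext^1(G^\vee,W_1)$, using $H^1(M_1)$. This chase only needs surjectivity at the two outer terms, so $\Ext^2$ plays no role.

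For $n=1$ I would use the resolution $0\to G\to K\to L\to 0$ of Corollary \ref{corollary: begueri for connected}. Then $M_1\simeq [M(K)/VM(K)\to M(L)/VM(L)]$, which by \cite[Lemma 4.18]{Zin84} is the complex of cotangent spaces $[\omega_K\to\omega_L]$ (or its dual-appropriate variant), with $H^1=\coker(\omega_K\to\omega_L)$. The recipe is then:
\begin{enumerate}
\item Identify $\tau^{\leq 1}R\Hom(G^\vee,\Ghat_a)$ or $\tau^{\leq1}R\Hom(G^\vee,\G_a)$ via Cartier duality with the same complex.
\item Show concretely that every extension of $G^\vee$ by $\G_a$ comes from an element of $\omega_L$.
\end{enumerate}
A cleaner route to both steps is to dualize the resolution differently. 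Apply $R\Hom(-,W_1)=R\Hom(-,\G_a)$ to a resolution of $G^\vee$ itself, or use $R\Hom(G^\vee,\G_a)\simeq R\Gamma$-type descriptions: $\Ext^i(G^\vee,\G_a)$ is the primitive/Hochschild cohomology of the coordinate ring of $G^\vee$. By duality this becomes Lie-algebra data of $G$, namely $\Ext^1(G^\vee,\G_a)\cong H^1(\ell_G^\vee)$-type invariants. Matching this against $\coker(\omega_K\to\omega_L)=H^1(\ell^\vee_G)$ gives the claim.

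The main obstacle is exactly this $n=1$ identification, $\Ext^1(G^\vee,\G_a)\cong H^1(\ell_G^\vee)$, compatibly with $f_1$. My first attempt would avoid computing $\Ext^1$ abstractly. Instead, given an extension $0\to\G_a\to E\to G^\vee\to 0$, I would Cartier-dualize it via \cite[Proposition 4.4.1]{AM25}, interpreting extensions of $G^\vee$ by $W$ as maps from $G$ to $\What$-type objects. Such an extension then yields a map $G\to \Ghat_a$-torsor data. This can be pushed through $K\to L$, using $\Ext^1_{\fpqc}(\What,K)=0$ from Proposition \ref{prop: ext1 from What} and formal smoothness of $K$, to lift to an element of $M(L)/VM(L)$. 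That element maps to the given class, which proves surjectivity.
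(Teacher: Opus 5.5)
Your reduction from $n$ to $n=1$ is sound, and it differs from the paper's. You run a four-lemma chase on the long exact sequences attached to $(M_1)|_{\sigma^n}\to M_{n+1}\to M_n$ and $0\to W_1|_{\sigma^n}\to W_{n+1}\to W_n\to 0$. This needs only $H^2(M_1)=0$, the inductive hypothesis, and functoriality of the cone-of-truncations map. The paper instead reads off from the $n=1$ case that $V$, hence $V^n$, is injective on $\Ext^2(G^\vee,W)$, which kills the obstruction term in $0\to H^1(M_n)\to\Ext^1(G^\vee,W_n)\to\ker(V^n)\to 0$.

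The genuine gap is the case $n=1$. It carries all of the content, and you leave it as ``the main obstacle.'' Matching $H^1(M_1)$ and $\Ext^1(G^\vee,\G_a)$ abstractly with $H^1(\ell_G^\vee)$ says nothing about the specific injection $H^1(f_1)$. Over a non-Artinian base, an injective map between isomorphic finitely presented modules need not be onto: for $G=\alpha_p$ over $\F_p[t]$ both sides are free of rank one, and multiplication by $t$ is injective. Your last paragraph does not supply the missing compatibility either. It is unclear what is being Cartier-dualized, what ``$\Ghat_a$-torsor data'' means, or how formal smoothness of $K$ and $\Ext^1_{\fpqc}(\What,K)=0$ would produce a preimage. (Also, $M(K)/VM(K)$ is $\Lie K$, not $\omega_K$, so $M_1\simeq[\Lie K\to\Lie L]\simeq\ell_G^\vee$.)

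Here is what is actually needed. Since $H^1(M_1)=\coker\bigl(V\colon \Ext^1(G^\vee,W)|_{\sigma}\to\Ext^1(G^\vee,W)\bigr)$, you must show that $\Ext^1(G^\vee,W)\to\Ext^1(G^\vee,\G_a)$ is surjective. Work with the dual sequence $0\to L^\vee\to K^\vee\to G^\vee\to 0$ of a resolution. Two inputs do it.
\begin{enumerate}
\item Grothendieck's formula $\ell_G^\vee\cong\tau^{\leq 1}R\Hom(G^\vee,\G_a)$, used compatibly with the triangle $\ell_G^\vee\to\Lie K\to\Lie L$, identifies the boundary map $\delta\colon\Hom(L^\vee,\G_a)=\Lie L\to\Ext^1(G^\vee,\G_a)$ with the surjection $\Lie L\to H^1(\ell_G^\vee)$. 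Equivalently, $\Ext^1(G^\vee,\G_a)\to\Ext^1(K^\vee,\G_a)$ is zero.
\item $\Ext^1(L^\vee,W)\cong\Ext^1(\What,L)=0$, by Proposition \ref{prop: ext1 from What} applied to $L$.
\end{enumerate}
The paper combines these by noting that the connecting map $\Ext^1(G^\vee,\G_a)\to\Ext^2(G^\vee,W)$ becomes zero after composing with $\Ext^2(G^\vee,W)\to\Ext^2(K^\vee,W)$, and this last map is injective by the second input. Alternatively, the second input makes $\Hom(L^\vee,W)\to\Hom(L^\vee,\G_a)$ surjective. Then any class $\delta(\eta)$ lifts to $\delta_W(\tilde\eta)\in\Ext^1(G^\vee,W)$ by naturality of boundary maps.
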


Since $f_n$ is a map between objects with cohomology only in degrees 0 and 1, and it induces an isomorphism on $H^0$, to prove the proposition it suffices to prove that the injection \be
H^1(f_n): H^1(M_n) \hookrightarrow \Ext^1(G^\vee, W_n)
\ee
is an isomorphism. We begin with the following lemma.

\begin{lemma} \label{lemma: Ext to formal Lie is zero}
Let $0 \to G \to K \to L \to 0$ be a resolution of $G$ by commutative formal Lie groups over $R$.  Then the induced map $\Ext^1(G^\vee, \G_a) \to \Ext^1(K^\vee, \G_a)$ is zero.
\end{lemma}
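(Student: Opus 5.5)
The plan is to translate both $\Ext^1$ groups into deformation-theoretic invariants of the coordinate rings of $G$ and $K$, where the vanishing becomes formal smoothness of $K$. Write $A=\mathcal{O}(G)$ and $\widehat{A}_K=\mathcal{O}(K)$. The map $G\to K$ gives a continuous surjection $\widehat{A}_K\to A$, which is augmented, and dually a homomorphism $K^\vee\to G^\vee$. The map in the lemma is pullback of extensions along $K^\vee\to G^\vee$.

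\emph{Step 1: cocycle description of $\Ext^1(H,\G_a)$.} Let $H$ be either $G^\vee$ or $K^\vee$; in both cases $H$ is an affine flat group scheme. Its coordinate ring is the linear dual $A^\vee$, respectively the continuous dual $\widehat{A}_K^\vee$, and this is a countably generated projective $R$-module by Proposition~\ref{prop: coalgebras}. Given an extension $0\to\G_a\to E\to H\to 0$, the $\G_a$-torsor $E\to H$ is trivial, since $H$ is affine and $H^1_{\fpqc}$ of quasicoherent sheaves on affines vanishes. So $E$ is described by a symmetric normalized Hochschild $2$-cocycle $c\in \mathcal{O}(H)\otimes\mathcal{O}(H)$ for the comultiplication, taken modulo coboundaries $\Delta(b)-b\otimes1-1\otimes b$. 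Dualizing, such a $c$ is a symmetric $R$-bilinear form on $A$ (respectively a continuous one on $\widehat{A}_K$) that satisfies the Hochschild cocycle identity for the \emph{multiplication} of $A$, with coefficients in $R$ via the augmentation. Coboundaries are exactly the forms $(a,a')\mapsto b(aa')-\epsilon(a)b(a')-b(a)\epsilon(a')$. This identifies
\[\Ext^1(G^\vee,\G_a)\cong \mathrm{Harr}^2(A/R;R),\qquad \Ext^1(K^\vee,\G_a)\cong \mathrm{Harr}^2_{\mathrm{cts}}(\widehat{A}_K/R;R),\]
the (continuous) Harrison cohomology with coefficients in $R$ via the augmentation. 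Equivalently, these groups classify square-zero algebra extensions of $A$ (resp.\ $\widehat{A}_K$) by $R$ modulo split ones. Under these identifications, pullback along $K^\vee\to G^\vee$ corresponds to precomposing forms with $\widehat{A}_K\to A$, that is, to pulling back algebra extensions along this surjection.

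\emph{Step 2: vanishing for $K$.} Zariski locally $\widehat{A}_K=R[[x_1,\dots,x_d]]$. A continuous square-zero extension of $\widehat{A}_K$ by $R$ splits continuously, by formal smoothness of $K$ (lift the $x_i$ to the extension). Hence the pulled-back class is zero. Because Harrison cocycles modulo coboundaries form a quasicoherent object over the affine base, the local splittings can be glued; alternatively one can argue with global coordinates after Zariski localization and then use injectivity of $\Ext^1(K^\vee,\G_a)\to\prod\Ext^1(K^\vee_{R_i},\G_a)$ for a Zariski cover. In fact this gives $\Ext^1(K^\vee,\G_a)=0$ outright, which is stronger than the lemma.

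\emph{Main obstacle.} The step I expect to be hardest is Step 1. It requires checking that a Cartier-duality-compatible extension of $H$ by $\G_a$ is really captured by symmetric cocycles with the correct continuity, in particular that the continuous dual pairing makes cocycles on $K^\vee$ correspond to \emph{continuous} forms on $\widehat{A}_K$. It also requires checking that symmetry, which encodes commutativity of $E$, matches Harrison rather than Hochschild cohomology. In Step 2, the gluing from the local coordinate description back to the global statement also needs care.
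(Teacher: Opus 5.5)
Your proposal is correct, but it takes a different route from the paper and proves more than the lemma.

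\textbf{The paper's route.} The paper never computes $\Ext^1(K^\vee,\G_a)$. It uses the triangle $\ell_G^\vee\to\ell_K^\vee\to\ell_L^\vee$, where $\ell_K^\vee=\Lie(K)$ and $\ell_L^\vee=\Lie(L)$ by formal smoothness. This makes $\Lie(L)\to H^1(\ell_G^\vee)$ surjective. It then invokes Grothendieck's formula \cite[\S 14]{MM74} to identify this map with the boundary $\delta\colon\Hom(L^\vee,\G_a)\to\Ext^1(G^\vee,\G_a)$ of the dual sequence $0\to L^\vee\to K^\vee\to G^\vee\to 0$. Exactness then forces the next map $\Ext^1(G^\vee,\G_a)\to\Ext^1(K^\vee,\G_a)$ to vanish.

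\textbf{Your route.} You instead show $\Ext^1(K^\vee,\G_a)=0$ outright for every commutative formal Lie group $K$, by explicit cocycles. The point you flag as the main obstacle does go through. Write $A=\mathcal{O}(K)$. Then $\mathcal{O}(K^\vee)^{\otimes 2}=\colim_N (A/\mathfrak{m}^N\otimes A/\mathfrak{m}^N)^\vee$, so a continuous cocycle $c$ descends to a symmetric cocycle on $A/\mathfrak{m}^N$. The twisted ring $E=A/\mathfrak{m}^N\oplus_c R$ is augmented over $R$ with nilpotent augmentation ideal, since that ideal is an extension of $\mathfrak{m}/\mathfrak{m}^N$ by a square-zero ideal. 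Formal smoothness of $K$ lifts the tautological point $A\to A/\mathfrak{m}^N$ to some $s\in K(E)$, written $s(f)=(\bar f,b(f))$. Multiplicativity of $s$ says exactly that $c(f,g)=b(fg)-\epsilon(f)b(g)-\epsilon(g)b(f)$. Because $s$ factors through some $A/\mathfrak{m}^M$, the form $b$ is continuous, i.e.\ $b\in\mathcal{O}(K^\vee)$.

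Two parts of your plan can be dropped. This argument uses only the functorial lifting property of $K$, so you need neither coordinates nor Zariski gluing. And since the target group vanishes, your description of $\Ext^1(G^\vee,\G_a)$ in Step 1 is not needed.

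\textbf{Comparison.} The paper's argument is two lines, and it reuses the Grothendieck formula it needs anyway for Proposition \ref{proposition: extension of Lie}. However, it implicitly relies on that formula applying naturally to the formal Lie groups $K$ and $L$, and on the identification of $\Lie(L)\to H^1(\ell_G^\vee)$ with $\delta$. Your argument is elementary and self-contained. It yields the sharper vanishing $\Ext^1(K^\vee,\G_a)=0$, which is the degree-one part of the formal-group analogue $\tau^{\leq 1}R\Hom(K^\vee,\G_a)\simeq\Lie(K)$ of Grothendieck's formula.
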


\begin{proof}
We have a distinguished triangle
\be
\ell_G^\vee \to \ell_K^\vee \to \ell_L^\vee \overset{+1}\to
\ee
in $D(R)$. Since $K$ is formally smooth, we have $\ell_K^\vee = \Lie(K)$ and similarly for $L$. Therefore
\be
\ell_G^\vee \cong \Fib(\Lie(K) \to \Lie(L)).
\ee
Let $f: \Lie(L) \to H^1(\ell_G^\vee)$ be the induced map. Grothendieck's formula for the dual of the co-Lie complex \cite[§14]{MM74} gives identifications $\Lie(L) \cong \Hom(L^\vee, \G_a)$ and $H^1(\ell_G^\vee) \cong \Ext^1(G^\vee, \G_a)$. Under these identifications, $f$ is identified with the boundary map $\Hom(L^\vee, \G_a) \overset{\delta}\to \Ext^1(G^\vee, \G_a)$.  It follows that $\delta$ is surjective, and therefore $\Ext^1(G^\vee, \G_a) \to \Ext^1(K^\vee, \G_a)$ is zero.
\end{proof}

\begin{proof}[Proof of Proposition \ref{proposition: truncation RHom}]
We begin by showing that $f_1$ is an isomorphism.  On $H^1$, the map is the injection
\be
\coker(V: \Ext^1(G^\vee, W)|_{\sigma} \to \Ext^1(G^\vee, W)) \hookrightarrow \Ext^1(G^\vee, \G_a),
\ee
so it suffices to show that the map $\Ext^1(G^\vee, W) \to \Ext^1(G^\vee, \G_a)$ induced by the surjection $W \to \G_a$ is surjective.
Let $0 \to G \to K \to L \to 0$ be a resolution by commutative formal Lie groups.  We have a commutative square
\be
\begin{tikzcd}
\Ext^1(G^\vee, \G_a) \arrow[d, "0"] \arrow[r] & \Ext^2(G^\vee, W) \arrow[d] \\
\Ext^1(K^\vee, \G_a) \arrow[r] & \Ext^2(K^\vee, W)
\end{tikzcd}
\ee
where the left vertical map is zero by Lemma \ref{lemma: Ext to formal Lie is zero}.  Moreover, the map $\Ext^2(G^\vee, W) \to \Ext^2(K^\vee, W)$ is injective: its kernel is the image of $\Ext^1(L^\vee, W)$, and we have $\Ext^1(L^\vee, W) = \Ext^1(\What, L) = 0$ by Proposition \ref{prop: ext1 from What}.  It follows that the map $\Ext^1(G^\vee, \G_a) \to \Ext^2(G^\vee, W)$ is zero, and consequently the map $\Ext^1(G^\vee, W) \to \Ext^1(G^\vee, \G_a)$ is surjective, as desired.

For $f_n$, we have an exact sequence
\be
\Ext^1(G^\vee, W)|_{\sigma^n} \overset{V^n}\to \Ext^1(G^\vee, W) \to \Ext^1(G^\vee, W_n) \to \Ext^2(G^\vee, W)|_{\sigma^n} \overset{V^n}\to \Ext^2(G^\vee, W)
\ee
yielding a short exact sequence
\be
0 \to H^1(M_n) \to \Ext^1(G^\vee, W_n) \to \ker(V^n: \Ext^2(G^\vee, W)|_{\sigma^n} \to \Ext^2(G^\vee, W)) \to 0.
\ee
By the $n = 1$ case, the map $H^1(M_1) \to \Ext^1(G^\vee, \G_a)$ is an isomorphism, so $V: \Ext^2(G^\vee, W)|_{\sigma} \to \Ext^2(G^\vee, W)$ is injective.  Since $V^n: \Ext^2(G^\vee, W)|_{\sigma^n} \to \Ext^2(G^\vee, W)$ is the composition $\Ext^2(G^\vee, W)|_{\sigma^n} \overset{V}\to \Ext^2(G^\vee, W)|_{\sigma^{n-1}} \overset{V}\to \dots \to \Ext^2(G^\vee, W)$ and restriction of scalars preserves injectivity, we have that $V^n: \Ext^2(G^\vee, W)|_{\sigma^n} \to \Ext^2(G^\vee, W)$ is injective, so $H^1(M_n) \to \Ext^1(G^\vee, W_n)$ is an isomorphism, and we are done.
\end{proof}

This leads to the following result on the $V$-adic structure of $M$.

\begin{proposition} \label{proposition: extension of Lie}
For each $n \geq 1$, $M_n \in D(W(R))$ is an iterated extension of the Witt Frobenius restricted Lie complexes $\ell_G^\vee, \ell_G^\vee|_{\sigma}, \dots, \ell_G^\vee|_{\sigma^{n-1}}$.
\end{proposition}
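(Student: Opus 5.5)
The plan is to induct on $n$ using the exact triangles
\[
(M_1)|_{\sigma^{k}} \overset{V^{k}}\to M_{k+1} \to M_{k} \overset{+1}\to
\]
in $D(W(R))$, constructed at the start of this subsection by tensoring the triangle $V^k\E_R/V^{k+1}\E_R \to \E_R/V^{k+1}\E_R \to \E_R/V^k\E_R$ with $M$ and using the isomorphism $V^k: \E_R/V\E_R \to V^k\E_R/V^{k+1}\E_R$ of right modules, twisted by $\sigma^k$. Given these triangles, the proposition reduces to the base case, namely the identification $M_1 \cong \ell_G^\vee$ in $D(W(R))$, with $W(R)$ acting through $W(R)\to R$. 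Once this holds, $(M_1)|_{\sigma^k} \cong \ell_G^\vee|_{\sigma^k}$, since restriction of scalars is exact and compatible with the identification. Induction then exhibits $M_{k+1}$ as an extension of $M_k$, an iterated extension of $\ell_G^\vee,\dots,\ell_G^\vee|_{\sigma^{k-1}}$, by $\ell_G^\vee|_{\sigma^k}$.

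For the base case I see two routes, and I would pursue the first. By Proposition \ref{proposition: truncation RHom} with $n=1$, $M_1 \cong \tau^{\leq 1}R\Hom(G^\vee, W_1) = \tau^{\leq 1}R\Hom(G^\vee,\G_a)$. By Grothendieck's formula \cite[§14]{MM74}, already used in Lemma \ref{lemma: Ext to formal Lie is zero}, the dual co-Lie complex satisfies $\ell_G^\vee \cong \tau^{\leq 1}R\Hom(G^\vee,\G_a)$ for finite locally free $G$. This gives $M_1\cong \ell_G^\vee$ compatibly with the $R$-module structures.

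The alternative is more elementary. Take a resolution $0\to G\to K\to L\to 0$ by formal Lie groups (Corollary \ref{corollary: begueri for connected}), so that $M \simeq [M(K)\to M(L)]$ by Proposition \ref{prop: resolution of MG}. Since Cartier modules of formal Lie groups are $V$-torsion free, $M/^{\bb L}VM \simeq [M(K)/VM(K)\to M(L)/VM(L)] = [\Lie(K)\to\Lie(L)]$ using \cite[Lemma 4.18]{Zin84}. This is $\Fib(\Lie K\to \Lie L)\cong \ell_G^\vee$, by the triangle $\ell_G^\vee\to\ell_K^\vee\to\ell_L^\vee$ from the proof of Lemma \ref{lemma: Ext to formal Lie is zero}.

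The main point requiring care is checking that the $W(R)$-module structures match. On $M_1$ the $W(R)$-action factors through $R$, and in the triangle the first term carries the $\sigma^k$-twisted structure because $aV^k = V^k\sigma^k(a)$. Beyond that, the argument is formal: an iterated extension is just the statement that the successive triangles exist.
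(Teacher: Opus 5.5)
Your proposal is correct, and your primary route is exactly the paper's: identify $M_1 \cong \tau^{\leq 1}R\Hom(G^\vee,\G_a) \cong \ell_G^\vee$ using Proposition \ref{proposition: truncation RHom} for $n=1$ together with Grothendieck's formula \cite[\S 14]{MM74}, then induct along the triangles $(M_1)|_{\sigma^{k}} \overset{V^k}{\to} M_{k+1} \to M_k \overset{+1}{\to}$. Your alternative base case is also valid: you compute $M/^{\mathbb{L}}VM \simeq [\Lie(K) \to \Lie(L)] \simeq \ell_G^\vee$ from a resolution by formal Lie groups, and the derived quotient is underived because Cartier modules are $V$-torsion free. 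This route has the advantage of bypassing Proposition \ref{proposition: truncation RHom}, and with it Cartier duality.
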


\begin{proof}
By Grothendieck's formula \cite[§14]{MM74}, we have a canonical isomorphism $\ell_G^\vee \cong \tau^{\leq 1}R\Hom(G^\vee, \G_a)$. Therefore the result follows from Proposition \ref{proposition: truncation RHom} and induction applied to the triangles
\[
(M_1)|_{\sigma^n} \overset{V^n}\to M_{n+1} \to M_n \overset{+1}\to.
\qedhere
\]
\end{proof}

\subsection{Base change}

\begin{proposition} \label{proposition: base change}
    Suppose that $G \in C(R)$ and $R \to R'$ is a morphism of $p$-nilpotent rings. Let $G' = G \times_{\Spec R} \Spec R'$. Then 
    \[ \uM(G') \cong \E_{R'} \Lotimes{\E_{R}} \uM(G).\]
\end{proposition}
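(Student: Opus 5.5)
Choose a resolution $0 \to G \to K \to L \to 0$ of $G$ by commutative formal Lie groups over $R$, for instance the Bégueri resolution of Corollary \ref{corollary: begueri for connected}. Base change this to get $0 \to G' \to K' \to L' \to 0$ with $K' = K \times_{\Spec R} \Spec R'$ and $L' = L\times_{\Spec R}\Spec R'$. Base change of formal Lie groups gives formal Lie groups. The sequence stays exact in the fpqc topology over $R'$, because exactness of a sequence of fpqc sheaves is checked on $R'$-algebras, which are also $R$-algebras; the surjectivity $K' \to L'$ is just the restriction of the fpqc surjectivity of $K\to L$. By Proposition \ref{prop: resolution of MG} applied over $R$ and over $R'$ we get
\[ \uM(G) \simeq [M(K) \to M(L)], \qquad \uM(G') \simeq [M(K') \to M(L')]. \]
So it suffices to identify $\E_{R'}\Lotimes{\E_R}[M(K)\to M(L)]$ with $[M(K')\to M(L')]$, naturally in the map.

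For this I will use Lemma \ref{lemma: tor vanishing}: since $M(K)$ and $M(L)$ are Cartier modules of formal Lie groups, $\Tor_i^{\E_R}(\E_{R'}, M(K)) = 0$ for $i>0$, and likewise for $M(L)$. Tensoring the fiber sequence $\uM(G) \to M(K) \to M(L)$ with $\E_{R'}$ is triangulated, so
\[ \E_{R'}\Lotimes{\E_R}\uM(G) \simeq \Fib\bigl(\E_{R'}\otimes_{\E_R} M(K) \to \E_{R'}\otimes_{\E_R} M(L)\bigr). \]
It remains to know that the natural map $\E_{R'}\otimes_{\E_R} M(K) \to M(K')$ is an isomorphism, compatibly with $M(f)$ for $f\colon K\to L$. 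This is the compatibility of Cartier theory with base change, \cite[Theorems 4.43 and 4.45]{Zin84}, which the paper already used in the proof of Proposition \ref{proposition: derived isogenies} for $R\to\ol R$; the same argument works for arbitrary $R\to R'$.

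Alternatively one can check it via structure equations. Reduce Zariski-locally, or by adding $M_Q$ as in the proof of Lemma \ref{lemma: tor vanishing}, to the case $M(K)/VM(K)$ free. Then $M(K) = \coker(\varphi)$ with $\varphi$ a structural equation. Its base change $\varphi^{(\phi)}$ is again a structural equation, and its cokernel is $\E_{R'}\otimes_{\E_R}M(K)$, which is $V$-reduced with $V$-quotient $M(K)/VM(K)\otimes_R R'$ by Proposition \ref{prop: structure equations}. This identifies it as the Cartier module of a formal Lie group with the correct tangent space. Naturality of the map to $M(K')$, which comes from $\Hom(\What,K)\to\Hom(\What_{R'},K')$, then gives the isomorphism, since both sides are Cartier modules of formal groups agreeing on $\aug^*$ after base change.

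The main obstacle is independence of the resolution and functoriality. Proposition \ref{prop: resolution of MG} gives an isomorphism in $D(\E_{R'})$, but one should make sure the resulting identification is canonical, i.e. independent of $K\to L$. For this I would construct the map directly: there is a natural map $\E_{R'}\Lotimes{\E_R}\tau^{\le1}R\Hom(\What,G)\to\tau^{\le1}R\Hom(\What_{R'},G')$ induced by pullback along $\Spec R'\to\Spec R$ (using the $\tau^{\le1}$-connectivity of the source). The computation above then shows this natural map is an isomorphism, and the resolution is used only to check it.
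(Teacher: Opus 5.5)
Your proof is correct and follows the paper's argument essentially verbatim. You base change a resolution $0 \to G \to K \to L \to 0$ by formal Lie groups, apply Proposition \ref{prop: resolution of MG} over $R$ and over $R'$, use Lemma \ref{lemma: tor vanishing} so that the derived base change of $M(K)$ and $M(L)$ is underived, and identify $\E_{R'} \otimes_{\E_R} M(K) \cong M(K')$ via \cite[Theorems 4.43 and 4.45]{Zin84}.

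Your extra material goes beyond the paper, which asserts only an abstract isomorphism. The structure-equation variant has a small gap as written. Matching tangent spaces does not by itself make the specific map $\E_{R'} \otimes_{\E_R} M(K) \to M(K')$ an isomorphism. You should add that this map is an isomorphism modulo $V$. Then you can conclude by $V$-adic d\'evissage, using that both sides are $V$-reduced.
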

\begin{proof}
Let $0 \to G \to K \to L \to 0$ be a resolution of $G$ by formal Lie groups, and let $0 \to G' \to K' \to L' \to 0$ be the base change of this resolution to $\Spec R'$.  By Proposition \ref{prop: resolution of MG}, we have $\uM(G) \cong \Fib(M(K) \to M(L))$ and $\uM(G') \cong \Fib(M(K') \to M(L'))$.  Applying derived base change to $\E_{R'}$ yields a distinguished triangle
\be
\E_{R'} \Lotimes{\E_{R}} \uM(G) \to \E_{R'} \Lotimes{\E_R} M(K) \to \E_{R'} \Lotimes{\E_R} M(L) \overset{+1}\to
\ee
in $D(\E_{R'})$. By Lemma \ref{lemma: tor vanishing}, we have $\E_{R'} \Lotimes{\E_R} M(K) \cong \E_{R'} \ten_{\E_R} M(K)$, so by compatibility of Cartier theory with base change \cite[Theorems 4.43 and 4.45]{Zin84} we have $\E_{R'} \ten_{\E_R} M(K) \cong M(K')$.  As the same reasoning applies to $L$, we deduce
\be
\E_{R'} \Lotimes{\E_{R}} \uM(G) \cong \Fib(M(K') \to M(L')) \cong \uM(G')
\ee
as desired.
\end{proof}

\subsection{The case of a perfect field}

Suppose that $k$ is a perfect field of characteristic $p$.
If $G \in C(k)$, then the Cartier dual $G^\vee$ of $G$ is unipotent, and the classical Dieudonné module associated to $G^\vee$ is 
\[ D(G^\vee) = \colim_m \Hom(G^\vee, W_m)|_{\sigma^{-m}}.\]
Suppose that $F^n_G = 0$. Then $D(G^\vee) = \Hom(G^\vee, W_n)|_{\sigma^{-n}}$.  
To relate $D(G^\vee)$ to $M(G)$, we consider the long exact sequence formed by applying $R\Hom(G^\vee,-)$ to 
\[ 0 \to W^{(p^n)} \overset{V^n}\to W \to W_n \to 0.\]

\begin{lemma}\label{lemma: perfect base no maps to W}
    If $R$ is a perfect $\Fp$-algebra and $G \in C(R)$, then $\Hom(G^\vee, W) = 0$.
\end{lemma}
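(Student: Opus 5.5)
We have to show $\Hom(G^\vee, W) = 0$ when $R$ is a perfect $\Fp$-algebra and $G \in C(R)$. The idea is that a homomorphism $G^\vee \to W$ must be compatible with Frobenius, and Frobenius kills the relevant data of $G$. We then use perfectness of $R$ to make $F$ injective on $W(R)$-valued points and conclude.

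Key input: since $G$ is fiberwise connected, its augmentation ideal $I \subseteq A = \mathcal{O}(G)$ is nilpotent by Proposition \ref{proposition: connected conditions}. A standard argument (Deligne's theorem plus the fact that the relative Frobenius raises augmentation-ideal elements to the $p$th power) then gives an $n$ with $F^n_{G} = 0$, where $F_G: G \to G^{(p)}$ is the relative Frobenius. Dually, the Verschiebung of $G^\vee$ satisfies $V^n_{G^\vee} = 0$, since Cartier duality exchanges $F$ and $V$.

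Now let $\phi: G^\vee \to W$ be a homomorphism. Verschiebung is natural for homomorphisms of commutative group schemes, so $V_W \circ \phi^{(p)} = \phi \circ V_{G^\vee}$ as maps $(G^\vee)^{(p)} \to W$. Iterating gives $V_W^n \circ \phi^{(p^n)} = \phi \circ V^n_{G^\vee} = 0$. The Verschiebung of the group scheme $W$ is the Witt vector shift $V: W^{(p)} \to W$, which is injective on points over any ring. Hence $\phi^{(p^n)} = 0$.

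Since $R$ is perfect, the Frobenius $\sigma: R \to R$ is an isomorphism. So base change along $\sigma^n$ is an equivalence of categories of $R$-group schemes, with $\phi^{(p^n)} = \sigma^{n*}\phi$. Therefore $\phi = 0$.

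The step most likely to cause trouble is identifying $V_W$ with the Witt shift on $W^{(p)} \cong W$. Over an $\Fp$-algebra, $W^{(p)} = W$ canonically, because $W$ is defined over $\Fp$. The identity $F_W V_W = p$, together with $F_W$ being the Witt Frobenius, then pins $V_W$ down as the shift $v$.

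An alternative route avoids these identifications. One works on $A$-points for $R$-algebras $A$ and uses the relation $v \circ \sigma = \sigma \circ v$ only where needed. If the naturality bookkeeping becomes awkward, the fallback is to argue via the fiber of $R\Hom(G^\vee, -)$ on $0 \to W^{(p^n)} \xrightarrow{V^n} W \to W_n \to 0$, which reduces the claim to the statement that $\Hom(G^\vee, W^{(p^n)}) \to \Hom(G^\vee, W)$ factors through $V^n_{G^\vee}$.
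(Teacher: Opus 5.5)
Your proof is correct and is essentially the paper's argument. Choose $n$ with $F^n_G = 0$, so that $V^n_{G^\vee} = 0$. Naturality of Verschiebung gives $V^n_W \circ \phi^{(p^n)} = \phi \circ V^n_{G^\vee} = 0$, injectivity of $V^n_W$ forces $\phi^{(p^n)} = 0$, and perfectness of $R$ then gives $\phi = 0$. Deligne's theorem is not needed to get $F^n_G = 0$: nilpotence of the augmentation ideal already suffices.
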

\begin{proof}
    Pick $n$ such that $F^n_G = 0$.
    If $f : G^\vee \to W$, then the following diagram commutes:
    \[
\begin{tikzcd}[cramped]
	{(G^\vee)^{(p^n)}} & {G^\vee} \\
	{W^{(p^n)}} & W
	\arrow["{V^n_{G^\vee}}", from=1-1, to=1-2]
	\arrow["{f^{(p^n)}}"', from=1-1, to=2-1]
	\arrow["f"', from=1-2, to=2-2]
	\arrow["{V^n_W}"', from=2-1, to=2-2]
\end{tikzcd}
    \]
    Since $V^n_W$ is injective and $V^n_{G^\vee} = 0$, we conclude $f^{(p^n)} = 0$. Since $R$ is perfect, $f=0$.
\end{proof}

\begin{lemma}\label{lemma: naturality of V on Ext1}
    Let $R$ be an $\Fp$-algebra and let $A$ and $B$ be flat affine commutative group schemes over $R$.
    Then the following diagram commutes:
    \[
\begin{tikzcd}[cramped]
	{\Ext^1(A,B^{(p)})} & {\Ext^1(A,B^{(p)})} \\
	{\Ext^1(A,B)} & {\Ext^1(A^{(p)},B^{(p)})}
	\arrow["{=}", from=1-1, to=1-2]
	\arrow["{(V_B)_*}", from=1-1, to=2-1]
	\arrow["{(V_A)^*}"', from=1-2, to=2-2]
	\arrow[from=2-1, to=2-2]
\end{tikzcd}
    \]
    where the arrow $\Ext^1(A,B) \to \Ext^1(A^{(p)},B^{(p)})$ is induced by the Frobenius twist, and $\Ext$ is taken in the fpqc topology over $\Spec(R)$.
\end{lemma}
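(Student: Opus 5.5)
Interpret $\Ext^1$ as Yoneda extensions of fpqc abelian sheaves. Both composites $\Ext^1(A,B^{(p)}) \to \Ext^1(A^{(p)},B^{(p)})$ should then be computed on an explicit extension and compared.

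Fix a class represented by $0 \to B^{(p)} \to E \to A \to 0$. Since $A$ and $B^{(p)}$ are flat affine, $E$ is an fpqc torsor under $B^{(p)}$ over the affine scheme $A$, hence representable by a flat affine group scheme; the Frobenius twist and the relative Frobenius and Verschiebung of $E$ therefore make sense. One composite is the pushout $(V_B)_*E$ along $V_B: B^{(p)} \to B$, followed by Frobenius twist. Twisting is base change along the absolute Frobenius of $R$, which is exact on flat affine groups and commutes with pushouts, so this gives $((V_B)_*E)^{(p)} = (V_{B^{(p)}})_* E^{(p)}$. Here I use that $(V_B)^{(p)} = V_{B^{(p)}}$, because Verschiebung commutes with base change. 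The other composite is the pullback $V_A^*E$ along $V_A: A^{(p)} \to A$.

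To compare these, use naturality of Verschiebung on the extension itself. Applying $V$ to the maps $B^{(p)} \to E \to A$ gives a commutative diagram from the twisted sequence $0 \to B^{(p^2)} \to E^{(p)} \to A^{(p)} \to 0$ to $0 \to B^{(p)} \to E \to A \to 0$, with vertical maps $V_{B^{(p)}}$, $V_E$, $V_A$. This twisted sequence is exact by flatness. The standard Yoneda fact is that a morphism of extensions with outer maps $\beta$ and $\alpha$ yields $\beta_*(\text{top}) = \alpha^*(\text{bottom})$ in $\Ext^1(A^{(p)},B^{(p)})$. Applied here, it gives $(V_{B^{(p)}})_*E^{(p)} = V_A^*E$, which is exactly the commutativity of the square.

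The main obstacle is justifying that $V$ exists and is natural for the middle term $E$. For general flat affine commutative group schemes, $V$ is defined through the symmetric-tensor construction (SGA3, VII$_A$). It requires flatness, which holds for $E$ as an fpqc torsor under a flat group over a flat base. Its naturality is with respect to all homomorphisms, including the inclusion $B^{(p)} \to E$ and the projection $E \to A$. If representability of $E$ is awkward, the fallback is to define $V$ on arbitrary abelian fpqc sheaves functorially, via $\Gamma$/symmetric powers of the sheaf, though this is more delicate. I would first check the representability route, since $E$ is affine over the affine base $A$ by fpqc descent of affine morphisms.
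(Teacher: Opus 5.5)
Your proposal is correct and essentially follows the paper's proof: the paper also represents a class by $0 \to B^{(p)} \to E \to A \to 0$ and uses descent (citing Oort) to see that $E$ is a flat affine group scheme, so that $V_E$ exists. It then uses the morphism of extensions $(V_{B^{(p)}}, V_E, V_A)$ from the twisted sequence to get a map $((V_B)_*E)^{(p)} \to E \times_{A,V_A} A^{(p)}$ compatible with the extension structures, which is exactly the Yoneda fact you invoke, and your identification $(V_B)^{(p)} = V_{B^{(p)}}$ is used implicitly in the paper's diagram as well.
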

\begin{proof}
    Suppose $0 \to B^{(p)} \to E \to A \to 0$ is an fpqc extension of sheaves. By descent, $E$ is also a flat affine group scheme over $R$ (e.g.\ by \cite[Proposition 17.4]{Oor66}\footnote{In \cite{Oor66}, Oort assumes the base is locally Noetherian, but this is not used in the proof of Proposition 17.4.}).
    Let $E'$ be the pushout of $E$ along $V_B: B^{(p)} \to B$.
    We wish to show $(E')^{(p)} \cong E \times_{A, V_A} A^{(p)}$ as an extension of $A^{(p)}$ by $B^{(p)}$.
    Consider the following diagram:
\[\begin{tikzcd}
	0 & {B^{(p^2)}} & {E^{(p)}} & {A^{(p)}} & 0 \\
	0 & {B^{(p)}} & {(E')^{(p)}} & {A^{(p)}} & 0 \\
	0 & {B^{(p)}} & {E \times_{A,V}A^{(p)}} & {A^{(p)}} & 0 \\
	0 & {B^{(p)}} & E & A & 0
	\arrow[from=1-1, to=1-2]
	\arrow[from=1-2, to=1-3]
	\arrow["V", from=1-2, to=2-2]
	\arrow[from=1-3, to=1-4]
	\arrow[from=1-3, to=2-3]
	\arrow["{V_E}"{description}, curve={height=24pt}, dashed, from=1-3, to=4-3]
	\arrow[from=1-4, to=1-5]
	\arrow["{=}"{description}, from=1-4, to=2-4]
	\arrow[from=2-1, to=2-2]
	\arrow[from=2-2, to=2-3]
	\arrow["{=}"{description}, from=2-2, to=3-2]
	\arrow["\lrcorner"{anchor=center, pos=0.125, rotate=180}, draw=none, from=2-3, to=1-2]
	\arrow[from=2-3, to=2-4]
	\arrow[from=2-4, to=2-5]
	\arrow["{=}"{description}, from=2-4, to=3-4]
	\arrow[from=3-1, to=3-2]
	\arrow[from=3-2, to=3-3]
	\arrow[from=3-2, to=4-2]
	\arrow[from=3-3, to=3-4]
	\arrow[from=3-3, to=4-3]
	\arrow["\lrcorner"{anchor=center, pos=0.125}, draw=none, from=3-3, to=4-4]
	\arrow[from=3-4, to=3-5]
	\arrow["V", from=3-4, to=4-4]
	\arrow[from=4-1, to=4-2]
	\arrow[from=4-2, to=4-3]
	\arrow[from=4-3, to=4-4]
	\arrow[from=4-4, to=4-5]
\end{tikzcd}\]
    Since $E$ is a flat affine group scheme, $V_E$ exists and makes the long vertical squares in the diagram commute.
By definition of pushout and pullback, $V_E$ induces a map $(E')^{(p)} \to E \times_{A,V_A} A^{(p)}$ which is compatible with the extension structure, as desired.
\end{proof}

\begin{corollary} \label{corollary: perfect field comparison}
    Let $k$ be a perfect field of characteristic $p>0$. If $G \in C(k)$, then 
    \[ \uM(G) \cong D(G^\vee)[-1].\]
\end{corollary}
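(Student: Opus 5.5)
We plan to compute $\uM(G)$ through the $V$-adic description of Proposition \ref{proposition: truncation RHom}, applied with $n$ chosen so that $F^n_G = 0$. Over $k$ perfect, $\E_k$ is $V$-adically complete, and $\uM(G)$ is derived $V$-complete by Theorem \ref{theorem: main}. Hence $\uM(G) \cong R\ilim_m M_m$, where $M_m \cong \tau^{\leq 1}R\Hom(G^\vee, W_m)$. So the task is to identify this inverse limit with $D(G^\vee)[-1]$.

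The first step is to show $H^0$ vanishes. Since $\Hom(G^\vee, W) = 0$ by Lemma \ref{lemma: perfect base no maps to W}, and $\uM(G) = \tau^{\leq 1}R\Hom(\What, G)$ is identified via Cartier duality with $\tau^{\leq 1}R\Hom(G^\vee, W)$, we get $H^0(\uM(G)) = 0$. Thus $\uM(G)$ is concentrated in degree $1$, and it remains to compute $H^1(\uM(G)) = \Ext^1(G^\vee, W)$.

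The second step uses the long exact sequence for $0 \to W^{(p^n)} \xrightarrow{V^n} W \to W_n \to 0$. Since $\Hom(G^\vee, W) = 0$, it gives
\[ 0 \to \Hom(G^\vee, W_n) \to \Ext^1(G^\vee, W^{(p^n)}) \xrightarrow{V^n_*} \Ext^1(G^\vee, W). \]
We claim $V^n_* = 0$. By Lemma \ref{lemma: naturality of V on Ext1}, applied $n$ times with $A = G^\vee$ and $B = W$, the map $V^n_*$ followed by Frobenius twisting equals $(V^n_{G^\vee})^*$. That map is zero because $V^n_{G^\vee}$ is dual to $F^n_G = 0$. Over a perfect field, the Frobenius twist $\Ext^1(G^\vee, W) \to \Ext^1((G^\vee)^{(p^n)}, W^{(p^n)})$ is a bijection, being base change along an automorphism of $k$. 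Hence $V^n_* = 0$, and we obtain an isomorphism $\Hom(G^\vee, W_n) \cong \Ext^1(G^\vee, W^{(p^n)})$. Untwisting by $\sigma^{-n}$ identifies the right side with $\Ext^1(G^\vee, W)$, up to the restriction of scalars $|_{\sigma^{-n}}$. This yields $H^1(\uM(G)) \cong \Hom(G^\vee, W_n)|_{\sigma^{-n}} = D(G^\vee)$.

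The step I expect to be the main obstacle is the last one: checking that this isomorphism is $\E_k$-linear, where $D(G^\vee)$ carries its classical $F$, $V$ and semilinear structure. This means tracking how $V$ and $F$ on $\What$ translate through Cartier duality into $F$ and $V$ on $W$, and verifying that the $\sigma^{-n}$ twist makes the $W(k)$-semilinearity match. I would handle this by checking compatibility with $W(k)$, $F$ and $V$ separately on the explicit maps above, using naturality in $n$ to see independence of the choice of $n$.
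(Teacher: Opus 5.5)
Your proposal is correct and follows the paper's proof. Lemma \ref{lemma: perfect base no maps to W} kills $H^0$, Cartier duality gives $H^1(\uM(G)) \cong \Ext^1(G^\vee,W)$, and Lemma \ref{lemma: naturality of V on Ext1} together with perfectness of $k$ shows $(V^n_W)_*=0$ in the long exact sequence for $0 \to W \overset{V^n}{\to} W \to W_n \to 0$, which yields $\Ext^1(G^\vee,W)\cong \Hom(G^\vee,W_n)|_{\sigma^{-n}} = D(G^\vee)$. Your opening detour through derived $V$-completeness and $R\ilim_m M_m$ is never used and can be dropped, and the $\E_k$-linearity check you flag is also left implicit in the paper.
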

\begin{proof}
    Suppose $F^n_G = 0$. Then $D(G^\vee) = \Hom(G^\vee, W_n)|_{\sigma^{-n}} = \Hom(W_n^\vee, G)|_{\sigma^{-n}}$.
    By Lemma \ref{lemma: perfect base no maps to W}, $\Hom(G^\vee, W) = 0$, so $\uM(G) \cong \Ext^1(\What,G)[-1]$. By \cite[Proposition 4.4.1]{AM25}, Cartier duality induces an equivalence
    \[ \Ext^1(\What,G) \cong \Ext^1(G^\vee, W).
    \]
    By applying $R\Hom(G^\vee,-)$ to 
    \[ 0 \to W \overset{V^n}{\to} W|_{\sigma^{-n}} \to W_n|_{\sigma^{-n}} \to 0, 
    \]
    it suffices to show $(V^n_W)_*: \Ext^1(G^\vee,W) \to \Ext^1(G^\vee,W)$ is zero.
    By Lemma \ref{lemma: naturality of V on Ext1}, $(V^n_W)_*$ agrees with $(V^n_{G^\vee})^* = 0$ up to a twist. Since $k$ is perfect, $(V^n_W)_*: \Ext^1(G^\vee,W) \to \Ext^1(G^\vee,W)|_{\sigma^{-n}}$ is zero, which proves the claim.
\end{proof}

\begin{remark}
    The same argument shows that if $R$ is a perfect $\Fp$-algebra and $G \in C(R)$, then 
    \[ 
    \uM(G) \cong \Ext^1_{\fpqc}(G^\vee,W)[-1].\]
\end{remark}

\subsection{Exactness and extensions}

\begin{proposition}\label{prop: uM is exact}
    Let $R$ be a $p$-nilpotent ring and $G' \to G \to G''$ be a sequence of morphisms in $C(R)$.
    Then
    \[ 0 \to G' \to G \to G'' \to 0\]
    is exact in the fpqc topology
    if and only if 
    \[ \uM(G') \to \uM(G) \to \uM(G'')\]
    is a fiber sequence in $D(\E_R)$.
\end{proposition}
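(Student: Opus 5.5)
Both directions go through the same comparison: the object $\uM(G)=\tau^{\leq 1}R\Hom_{\fpqc}(\What,G)$ on one side, and the derived functor $R\Hom_{\fpqc}(\What,-)$ applied to a short exact sequence on the other. Fix the sequence $G'\to G\to G''$ in $C(R)$ with composite zero, so that $\uM(G')\to\uM(G)\to\uM(G'')$ is a complex.

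\emph{Forward direction.} Assume $0\to G'\to G\to G''\to 0$ is fpqc exact. Then
\[
R\Hom(\What,G')\to R\Hom(\What,G)\to R\Hom(\What,G'')\overset{+1}{\to}
\]
is a distinguished triangle. The truncation $\tau^{\leq 1}$ preserves this triangle provided two things hold. First, $\Ext^2(\What,G')\to\Ext^2(\What,G)$ must be injective. Second, the boundary map $\Ext^1(\What,G'')\to\Ext^2(\What,G')$ must vanish, which is equivalent to surjectivity of $\Ext^1(\What,G)\to\Ext^1(\What,G'')$.

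To check these, pick a resolution $0\to G'\to K'\to L'\to 0$ by commutative formal Lie groups. Proposition \ref{prop: ext1 from What} gives $\Ext^1(\What,K')=\Ext^1(\What,L')=0$. The long exact sequence then yields
\[
\Ext^2(\What,G')\cong\ker\bigl(\Ext^2(\What,K')\to\Ext^2(\What,L')\bigr).
\]
Both required statements need some control of $\Ext^2$, and I expect this to be the main obstacle.

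The route I would try avoids $\Ext^2$ entirely by building compatible resolutions. Choose a resolution $G\hookrightarrow K$, for instance the Bégueri one. The composite $G'\to K$ is then a closed embedding with quotient $K/G'$, which is a formal Lie group by Theorem \ref{theorem: quotients of FLG}. Since $G''=G/G'$ embeds into $K/G'$, this gives a $3\times 3$ diagram of resolutions:
\[
0\to G'\to K\to K/G'\to 0,\quad 0\to G\to K\to K/G\to 0,\quad 0\to G''\to K/G'\to K/G\to 0.
\]
Applying Proposition \ref{prop: resolution of MG} to each row, the three $\uM$'s are computed by the two-term complexes
\[
[M(K)\to M(K/G')],\quad [M(K)\to M(K/G)],\quad [M(K/G')\to M(K/G)].
\]
The octahedral axiom, applied to the composite $M(K)\to M(K/G')\to M(K/G)$, shows that
\[
\Fib\bigl(M(K)\to M(K/G')\bigr)\to \Fib\bigl(M(K)\to M(K/G)\bigr)\to \Fib\bigl(M(K/G')\to M(K/G)\bigr)
\]
is a fiber sequence. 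It remains to check that these maps agree with $\uM$ applied to $G'\to G\to G''$. That is a naturality check on the resolutions: the maps of resolutions are the identity on $K$ and the projection $K/G'\to K/G$.

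\emph{Converse.} Assume $\uM(G')\to\uM(G)\to\uM(G'')$ is a fiber sequence, so the composite is null-homotopic. By full faithfulness (Theorem \ref{theorem: fully faithful}), $\uM(G')\to\uM(G)$ is $\uM$ of some $i$, with the composite $G'\to G''$ equal to zero.

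Let $\bar G=\operatorname{coker}$ of $\ker(G\to G'')$. Then $Q=G/\ker(G\to G'')$ is connected and finite locally free, and sits in $C(R)$ via the embedding into $K/\ker$ together with Proposition \ref{proposition: connected conditions}. Granting that kernels behave well, the forward direction applied to $0\to\ker\to G\to Q\to 0$ gives a fiber sequence, and comparing it with the given one identifies $\uM(\ker)\cong\uM(G')$ and $\uM(Q)\cong\uM(G'')$.

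Flatness of kernels is not automatic, so I would argue via resolutions and Cartier modules instead. Realize $G'\to G\to G''$ by maps of resolutions as above. The hypothesis then says that the $9$-term diagram of Cartier modules is exact in the derived sense. Taking $H^0$ and $H^1$ gives an exact sequence
\[
0\to H^0\to H^0\to H^0\to H^1\to H^1\to H^1\to 0.
\]
Finally, exactness of group schemes can be detected after base change to geometric points, by Proposition \ref{proposition: base change} together with fiberwise flatness. Over a field, injectivity of $G'\to G$ and surjectivity of $G\to G''$ follow from full faithfulness applied to kernels and cokernels, which do lie in $C(k)$, and exactness in the middle follows by comparing orders. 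I expect this converse, especially the passage from fiberwise exactness back to exactness over $R$, to need the most care.
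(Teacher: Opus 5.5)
Your forward direction is correct, but it takes a detour the paper avoids. The paper only needs $\tau^{\leq 1}$ to preserve the triangle $R\Hom(\What,G')\to R\Hom(\What,G)\to R\Hom(\What,G'')$, which amounts to surjectivity of $\Ext^1(\What,G)\to\Ext^1(\What,G'')$. Your ``two conditions'' are in fact one: by exactness, injectivity of $\Ext^2(\What,G')\to\Ext^2(\What,G)$ is equivalent to vanishing of the boundary map. That surjectivity falls straight out of the diagram you build. Since $\Ext^1(\What,K)=\Ext^1(\What,K/G')=0$ by Proposition~\ref{prop: ext1 from What}, the rows $0\to G\to K\to K/G\to 0$ and $0\to G''\to K/G'\to K/G\to 0$ exhibit both $\Ext^1(\What,G)$ and $\Ext^1(\What,G'')$ as compatible quotients of $\Hom(\What,K/G)$. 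Your octahedral argument on $M(K)\to M(K/G')\to M(K/G)$ is also valid. However, it additionally requires the isomorphism of Proposition~\ref{prop: resolution of MG} to be natural with respect to morphisms of resolutions, so that the octahedral maps really are $\uM(G'\to G)$ and $\uM(G\to G'')$. The paper never states this naturality. It is true, since every step of that proof can be done functorially at chain level, but it is an extra verification. In exchange you get explicit two-term models for the whole triangle.

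Your converse has the paper's architecture: full faithfulness makes $G'\to G''$ zero, Proposition~\ref{proposition: base change} gives a fiber sequence over each geometric point $\Spec k$, you conclude exactness there, and you lift back to $R$. Only your last paragraph belongs to this argument, and the earlier ones should be dropped. Over a general $R$ nothing makes $G/\ker(G\to G'')$ finite locally free, and resolutions ``as above'' presuppose $G'=\ker(G\to G'')$, which is the conclusion.

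Two steps are asserted rather than proved.

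\emph{Lifting from geometric fibers back to $R$.} This is \cite[Proposition 1.1]{dJon93}, which the paper cites. To prove it directly:
\begin{itemize}
\item $G\to G''$ is finitely presented between $R$-flat schemes and is flat and surjective on fibers. By the fiberwise flatness criterion it is flat, hence faithfully flat.
\item So $\ker(G\to G'')$ is finite locally free.
\item $G'\to\ker(G\to G'')$ is then a map of finite locally free $R$-schemes that is an isomorphism on every fiber, hence an isomorphism by Nakayama.
\end{itemize}

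\emph{The argument over $k$.} Full faithfulness alone does not get you from ``$\uM(N)\to\uM(G'_s)\to\uM(G_s)$ is zero'' to ``$\uM(N)\to\uM(G'_s)$ is zero'' for $N=\ker(G'_s\to G_s)$. You need an extra input, for instance $\Hom(\uM(A),\uM(B)[-1])=0$ for $A,B\in C(k)$. To get this, resolve $B$ by formal Lie groups and combine $\Ext^{-1}(\uM(A),M(L))=0$ from the proof of Theorem~\ref{theorem: fully faithful} with a degree count. The same vanishing handles the cokernel of $G_s\to G''_s$. Similarly, ``comparing orders'' requires $|G_s|=p^{\mathrm{length}\,H^1(\uM(G_s))}$, that is, Corollary~\ref{corollary: perfect field comparison} plus classical Dieudonné theory. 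Once you invoke that corollary, the paper's finish is shorter. The fiber sequence becomes a short exact sequence of classical Dieudonné modules, and exactness of classical Dieudonné theory gives exactness of $0\to G'_s\to G_s\to G''_s\to 0$ directly.
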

\begin{proof}
    First suppose that $0 \to G' \to G \to G'' \to 0$ is exact in the fpqc topology.
    By definition, $\uM = \tau^{\leq 1} R\Hom_{\fpqc}(\What,-)$, so it suffices to show that 
    \[
        \Ext^1_{\fpqc}(\What, G) \to \Ext^1_{\fpqc}(\What,G'')
    \]
    is surjective.
    Let $0 \to G \to K^0 \to K^1 \to 0$ be a short exact sequence in the fpqc topology where $K^0$ and $K^1$ are formal Lie groups.
    By Theorem \ref{theorem: quotients of FLG}, $K^0/G'$ is also a formal Lie group, and by the third isomorphism theorem, 
    \[ 0 \to G'' \to K^0/G' \to K^1 \to 0\]
    is exact. Thus we have a commutative diagram 
    \[
\begin{tikzcd}[ampersand replacement=\&]
	0 \& G \& {K^0} \& {K^1} \& 0 \\
	0 \& {G''} \& {K^0/G'} \& {K^1} \& 0
	\arrow[from=1-1, to=1-2]
	\arrow[from=1-2, to=1-3]
	\arrow[from=1-2, to=2-2]
	\arrow[from=1-3, to=1-4]
	\arrow[from=1-3, to=2-3]
	\arrow[from=1-4, to=1-5]
	\arrow["{=}"{description}, from=1-4, to=2-4]
	\arrow[from=2-1, to=2-2]
	\arrow[from=2-2, to=2-3]
	\arrow[from=2-3, to=2-4]
	\arrow[from=2-4, to=2-5]
\end{tikzcd}
\]
with exact rows.
Applying $R\Hom_{\fpqc}(\What,-)$ gives the diagram 
\[
\begin{tikzcd}[cramped]
	{\Hom(\What,K^1)} & {\Ext^1_{\fpqc}(\What,G)} & {\Ext^1_{\fpqc}(\What,K^0)} \\
	{\Hom(\What,K^1)} & {\Ext^1_{\fpqc}(\What,G'')} & {\Ext^1_{\fpqc}(\What,K^0/G')}
	\arrow[from=1-1, to=1-2]
	\arrow["{=}"{description}, from=1-1, to=2-1]
	\arrow[from=1-2, to=1-3]
	\arrow[from=1-2, to=2-2]
	\arrow[from=1-3, to=2-3]
	\arrow[from=2-1, to=2-2]
	\arrow[from=2-2, to=2-3]
\end{tikzcd}
\]
with exact rows.
By Proposition \ref{prop: ext1 from What}, $\Ext^1_{\fpqc}(\What,K^0) = 0$ and $\Ext^1_{\fpqc}(\What, K^0/G')=  0$,
so $\Ext^1_{\fpqc}(\What,G) \to \Ext^1_{\fpqc}(\What,G'')$ is surjective.

Now suppose that $\uM(G') \to \uM(G) \to \uM(G'')$ is a fiber sequence in $D(\E_R)$. Since $\uM$ is fully faithful, the composition $G' \to G \to G''$ is zero, so using \cite[Proposition 1.1]{dJon93} we may check the exactness of $0 \to G' \to G \to G'' \to 0$ on geometric fibers. Let $s: \Spec k \to \Spec R$ be a geometric point. By Proposition \ref{proposition: base change}, $\uM$ is compatible with base change, and therefore applying derived base change to $\E_{k}$ yields a fiber sequence
\be
M(G'_{s}) \to M(G_{s}) \to M(G''_{s}) \overset{+1}\to
\ee
in $D(\E_{k})$. By Corollary \ref{corollary: perfect field comparison}, the long exact sequence associated to this fiber sequence is
\be
0 \to D((G'_s)^\vee) \to D(G_s^\vee) \to D((G_s'')^\vee) \to 0
\ee
where $D(-)$ is the classical contravariant Dieudonn\'e module functor over $k$.  Since classical Dieudonn\'e theory is an equivalence of abelian categories, it is exact, so $0 \to G'_{s} \to G_{s} \to G''_{s} \to 0$ is exact, as desired. 
\end{proof}

\begin{lemma}\label{lemma: C(R) closed under extensions}
    Let $R$ be a $p$-nilpotent ring. 
    If $G',G'' \in C(R)$ and 
    \[ 0 \to G' \to G \to G'' \to 0\]
    is a short exact sequence of abelian sheaves in the fpqc topology,
    then $G \in C(R)$.
\end{lemma}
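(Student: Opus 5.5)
We first show that $G$ is a finite locally free commutative group scheme, and then that it is fiberwise connected. The plan uses only fpqc descent and Proposition \ref{proposition: connected conditions}.

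\emph{Representability.} The map $G \to G''$ is an fpqc $G'$-torsor, since $G$ is a sheaf of abelian groups and $G'$ acts freely on it with quotient $G''$. Because $G'' = \Spec(A'')$ is affine, there is a faithfully flat $R$-algebra $A''\to B$ over which the torsor becomes trivial, namely $G \times_{G''} \Spec B \cong G' \times \Spec B$. So $G\times_{G''}\Spec B$ is representable by a scheme finite locally free over $\Spec B$. Since $G'$ is affine, fpqc descent of affine morphisms (Tag 0245) shows that $G \to G''$ is representable by an affine morphism. Fpqc descent of finite locally free morphisms (Tag 02VO) then shows that $G \to G''$ is finite locally free. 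Composing with the finite locally free morphism $G''\to\Spec R$, we conclude that $G = \Spec(A)$ with $A$ finite locally free over $R$. The group structure on the sheaf $G$ is then a commutative group scheme structure, because $G$ is represented. Alternatively, one can cite \cite[Proposition 17.4]{Oor66}, as the paper does in Lemma \ref{lemma: naturality of V on Ext1}, for representability of extensions of flat affine group schemes by flat affine group schemes.

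\emph{Fiberwise connectedness.} Let $s\in\Spec R$. Base change to $k(s)$ preserves exactness of $0\to G'_s\to G_s\to G''_s\to 0$ as fpqc sheaves, since pullback of sheaves is exact and the torsor description is stable under base change. So it suffices to treat the case where $R=k$ is a field. Over $k$, the neutral component $G_s^0$ surjects onto $G''_s$: its image is a connected subgroup of $G''_s$ that contains $G''_s\cap(\text{image})$, and the quotient $G_s\to G_s/G_s^0$ is étale. More cleanly, $G_s/G_s^0$ is étale and receives a surjection from $G_s/G'_s = G''_s$, because $G'_s\subseteq G^0_s$ as $G'_s$ is connected. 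A connected scheme mapping onto a finite étale scheme over $k$, with the identity point going to the identity, has trivial image. Hence $G_s/G_s^0$ is trivial, and $G_s$ is connected. As an alternative, one can use criterion \ref{A nil aug} of Proposition \ref{proposition: connected conditions}. Over $\bar k$, a reduced point of $G_s$ maps to the unique point of $G''_s$, so it lies in $G'_s$, which also has a single point. Thus $G_s$ has a single point, and its coordinate ring is Artinian local.

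\emph{Main obstacle.} The only nontrivial step is the representability of $G$. This relies on fpqc descent for affine (and finite locally free) morphisms along the torsor $G\to G''$. Descent of affineness is effective because the relevant quasi-coherent algebra descends, so this is a citation rather than a difficulty. With that in hand, Proposition \ref{proposition: connected conditions} finishes the argument.
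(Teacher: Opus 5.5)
Your proof is correct. The representability half is essentially the paper's. The paper cites \cite[Proposition 17.4]{Oor66} for affineness and then applies fpqc descent of finite locally free morphisms (Tag 02VO); your torsor-plus-descent-of-affine-morphisms argument just unpacks what that citation provides.

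The connectedness half takes a genuinely different route. The paper stays global and uses criterion (ii) of Proposition \ref{proposition: connected conditions}. Write $G=\Spec A$, $G'=\Spec A'$, $G''=\Spec A''$ with augmentation ideals $I, I', I''$. The tautological point $A\to A/AI''$ of $G$ maps to $0$ in $G''$, so it lies in $G'$, i.e.\ it factors through $A'$. Nilpotence of $I'$ then gives $I^n\subseteq AI''$, and nilpotence of $I''$ (say $I''^m=0$) gives $I^{nm}=0$.

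You instead check the definition fiber by fiber. You first note that restricting to $k(s)$-algebras preserves exactness. You then use finite group schemes over a field: $G'_s\subseteq G_s^0$, so the étale quotient $G_s/G_s^0$ receives a surjection from the connected $G''_s$ and is therefore trivial. Alternatively, you count $\bar k$-points.

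The paper's argument avoids the reduction to fibers and any appeal to the connected--\'etale sequence, and it gives an explicit nilpotence exponent. Yours is more classical and does not use the ideal-theoretic characterization of $C(R)$ at all. So your closing remark that Proposition \ref{proposition: connected conditions} finishes the argument does not describe what your proof actually does.

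One cosmetic fix: delete the garbled first attempt in your connectedness paragraph, the sentence about an image ``containing $G''_s\cap(\text{image})$''. The ``more cleanly'' version, or the $\bar k$-points version, is the argument that carries the proof.
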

\begin{proof}
    By \cite[Proposition 17.4]{Oor66}, $G$ is representable by an affine group scheme over $R$. 
    Since $G' \to \Spec R$ is finite locally free, fpqc descent \cite[\href{https://stacks.math.columbia.edu/tag/02VO}{Tag 02VO}]{stacks-project} implies $G \to G''$ is a finite locally free morphism, so $G \to \Spec R$ is finite locally free.

    We must show $G$ is fiberwise connected.
    By Proposition \ref{proposition: connected conditions}, it suffices to show the augmentation ideal of the coordinate ring of $G$ is nilpotent.
    Suppose $G = \Spec A, G' = \Spec A', G'' = \Spec A''$, with augmentation ideals $I \subseteq A, I' \subseteq A', I'' \subseteq A''$. Let $\pi: G \to G''$ be the quotient map and $g \in G(A/AI'')$ correspond to $A \to A/AI''$.
    Then $\pi(g) = 0$ since $A'' \to A \to A/AI''$ factors through $A'' \to A''/I'' = R$.
    Thus $g \in \ker(\pi) = G'$, so
    the map $A \to A/AI''$ factors through $A'$.
    Since $A \onto A'$ takes $I$ onto $I'$ and $I'$ is nilpotent, there exists $n$ such that $I^n \subseteq AI''$.
    Since $I'' \subset A''$ is nilpotent,
    this implies $I$ is nilpotent.    
\end{proof}

\begin{theorem}\label{theorem: ext1 preserved}
    Let $R$ be a $p$-nilpotent ring and $G', G'' \in C(R)$.
    Then there is an isomorphism 
    \[ \Ext^1_{\fpqc}(G'',G') \cong \Hom_{D(\E_R)}(\uM(G''),\uM(G')[1])\]
    defined by sending an extension 
    \[ 0 \to G' \to G \to G'' \to 0\]
    to the cofiber $\uM(G'') \to \uM(G')[1]$
    of the induced map $\uM(G) \to \uM(G'')$.
\end{theorem}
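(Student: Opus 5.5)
We first check that the map is well defined and additive, then show it is injective and surjective. Given an extension $0 \to G' \to G \to G'' \to 0$ of fpqc sheaves, Lemma \ref{lemma: C(R) closed under extensions} shows $G \in C(R)$. Proposition \ref{prop: uM is exact} then makes $\uM(G') \to \uM(G) \to \uM(G'')$ a fiber sequence, whose boundary map gives a class $\delta(E) \in \Hom_{D(\E_R)}(\uM(G''),\uM(G')[1])$. Isomorphic extensions give isomorphic fiber sequences compatible with the identity on the ends, so $\delta$ is well defined. Additivity, via Baer sum, can be verified as in ordinary homological algebra: the Baer sum is built from pullback along a diagonal and pushout along a sum. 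The functor $\uM = \tau^{\leq 1}R\Hom(\What,-)$ carries these operations to the corresponding operations on triangles, using that $\uM$ preserves fiber sequences by Proposition \ref{prop: uM is exact}. A cleaner route is to identify $\delta$ with the composite of the natural map $\Ext^1_{\fpqc}(G'',G') \to \Hom(R\Hom(\What,G''),R\Hom(\What,G')[1])$ with truncation; this composite is visibly additive.

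For injectivity, suppose $\delta(E) = 0$. Then the triangle $\uM(G') \to \uM(G) \to \uM(G'')$ splits, so there is a section $s\colon \uM(G'') \to \uM(G)$ of $\uM(\pi)$. By full faithfulness (Theorem \ref{theorem: fully faithful}), $s = \uM(\sigma)$ for some $\sigma\colon G'' \to G$, and faithfulness gives $\pi\sigma = \mathrm{id}$. Hence $E$ splits.

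For surjectivity, take $\xi\colon \uM(G'') \to \uM(G')[1]$ and let $M = \Fib(\xi)$, so there is a fiber sequence $\uM(G') \to M \to \uM(G'')$. We check the three conditions of Theorem \ref{theorem: essentially surjective} for $M$:
\begin{enumerate}
\item Derived $V$-completeness is closed under fibers.
\item $M/^{\mathbb{L}}VM$ is an extension of perfect complexes of tor-amplitude $[0,1]$, so it is again perfect of tor-amplitude $[0,1]$.
\item $L\Div(\E_{\bR}\Lotimes{\E_R} -)$ is exact and vanishes on both ends, so it vanishes on $M$.
\end{enumerate}
Thus $M \cong \uM(G)$ for some $G \in C(R)$. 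By full faithfulness, the maps $\uM(G') \to \uM(G) \to \uM(G'')$ come from homomorphisms $G' \to G \to G''$. Since these form a fiber sequence, the converse direction of Proposition \ref{prop: uM is exact} shows that $0 \to G' \to G \to G'' \to 0$ is exact. Its boundary is $\xi$ by construction, because we may take the triangle itself as the one defining $\delta$.

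The main subtlety is in the definition and additivity of $\delta$. A cofiber in a triangulated category is only well defined up to non-unique isomorphism, so we must check that the boundary map of the triangle is canonical. We will do this by realizing $\delta$ through the truncation of $R\Hom_{\fpqc}(\What,-)$ applied to the Yoneda class in $\Ext^1(G'',G')$. This makes the connecting map functorial, and it agrees with the triangle's boundary because $\Ext^1(\What,G) \to \Ext^1(\What,G'')$ is surjective, as shown in the proof of Proposition \ref{prop: uM is exact}. That surjectivity ensures truncation does not alter the connecting map in degree $1$.
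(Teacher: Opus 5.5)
Your proposal is correct and follows the paper's proof essentially step for step. Well-definedness comes from Lemma \ref{lemma: C(R) closed under extensions} and Proposition \ref{prop: uM is exact}. Injectivity comes from lifting a splitting through full faithfulness. Surjectivity comes from checking the three conditions of Theorem \ref{theorem: essentially surjective} for $\Fib(\xi)$, then using full faithfulness and the converse half of Proposition \ref{prop: uM is exact}.

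Your treatment of canonicity and additivity goes beyond the paper, which simply asserts additivity. To close the loop in your surjectivity step, you should state explicitly that any two distinguished triangles on $\uM(G') \to \uM(G) \to \uM(G'')$ have the same third map. This holds because the comparison automorphism $\phi$ of $\uM(G'')$ satisfies $\phi\circ\uM(\pi)=\uM(\pi)$, so $\phi=\uM(\psi)$ with $\psi\pi=\pi$. Since $\pi$ is an epimorphism, $\psi$ is the identity, and hence so is $\phi$.
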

\begin{proof}
    First, we explain why sending $0\to G' \to G \to G''\to0$ to $\uM(G'') \to \uM(G')[1]$ is well-defined.
    By Lemma \ref{lemma: C(R) closed under extensions}, if $G$ is such an extension, then $G \in C(R)$.
    By Proposition \ref{prop: uM is exact}, $\uM(G') \to \uM(G) \to \uM(G'')$ is a fiber sequence in $D(\E_R)$,
    so the cofiber of $\uM(G) \to \uM(G'')$ is indeed a map $\uM(G'') \to \uM(G')[1]$.
    An isomorphism of extensions gives equivalent cofibers.
    This defines an additive map 
    \begin{equation}\label{eq: extension map}
        \Ext^1_{\fpqc}(G'',G') \to \Hom_{D(\E_R)} (\uM(G''),\uM(G')[1]).
    \end{equation}
    We claim \eqref{eq: extension map} is injective. For suppose that $\uM(G'') \to \uM(G')[1]$ is zero.
    Then there exists a splitting $u: \uM(G'') \to \uM(G)$ of the fiber sequence $\uM(G') \to \uM(G) \to \uM(G'')$. 
    Since $\uM$ is fully faithful, $u = \uM(\varphi)$ where $\varphi$ is a splitting of $0 \to G' \to G \to G'' \to 0$. 
    
    We claim that \eqref{eq: extension map} is surjective. For let $M' = \uM(G')$ and $M'' = \uM(G'')$. Suppose that $f: M'' \to M'[1]$ is a map in $D(\E_R)$.
    Let $M$ be the fiber of $f$, so that there is a fiber sequence 
    \[ M' \to M \to M''.\]
    Since $M'$ and $M''$ are derived $V$-complete, so is $M$.
    As $M'/^{\bb{L}}VM'$ and $M''/^{\bb{L}}VM''$ are perfect complexes of $R$-modules of tor-amplitude $[0,1]$, so is $M/^{\bb{L}}VM$. 
    Additionally, $L\Div(\E_{\ol{R}} \Lotimes{\E_R} M') =0$ and $L\Div(\E_{\ol{R}} \Lotimes{\E_R} M'') = 0$, so that $L\Div(\E_{\ol{R}} \Lotimes{\E_R} M) = 0$.
    By Theorem \ref{theorem: essentially surjective}, $M \simeq \uM(G)$ for some $G \in C(R)$.
    Full faithfulness of $\uM(-)$ (Theorem \ref{theorem: fully faithful}) then implies that there are group homomorphisms $G' \to G \to G''$ inducing the maps $\uM(G') \to M \to \uM(G'')$.
    By Proposition \ref{prop: uM is exact}, $0 \to G' \to G \to G'' \to 0$ is exact. Hence $f:M'' \to M'[1]$ is in the image of $[G] \in \Ext^1_{\fpqc}(G'',G')$.
\end{proof}

\printbibliography

@misc{AM25,
  archiveprefix = {arXiv},
  author        = {Dima Arinkin and Joshua Mundinger},
  eprint        = {2512.13856},
  primaryclass  = {math.AG},
  title         = {Cartier duality via Mittag-Leffler modules},
  year          = {2025}
}

@misc{BD91_Hitchin,
  author = {Beilinson, Alexander and Drinfeld, Vladimir},
  url = {https://math.uchicago.edu/~drinfeld/langlands/QuantizationHitchin.pdf},
  title  = {Quantization of {H}itchin's integrable system and {H}ecke eigensheaves},
  year   = {1991}
}

@article{Beg80,
  author   = {Bégueri, Lucile},
  fjournal = {M\'{e}moires de la Soci\'{e}t\'{e} Math\'{e}matique de France. Nouvelle S\'{e}rie},
  issn     = {0037-9484},
  journal  = {M\'{e}m. Soc. Math. France (N.S.)},
  mrnumber = {615883},
  number   = {4},
  pages    = {124},
  title    = {Dualit\'{e} sur un corps local \`a corps r\'{e}siduel alg\'{e}briquement clos},
  year     = {1980}
}

@article{Car67Groupes,
  author   = {Cartier, Pierre},
  fjournal = {Comptes Rendus Hebdomadaires des S\'{e}ances de l'Acad\'{e}mie des Sciences. S\'{e}ries A et B},
  issn     = {0151-0509},
  journal  = {C. R. Acad. Sci. Paris S\'{e}r. A-B},
  mrnumber = {218361},
  pages    = {A49--A52},
  title    = {Groupes formels associ\'{e}s aux anneaux de {W}itt g\'{e}n\'{e}ralis\'{e}s},
  volume   = {265},
  year     = {1967}
}

@article{Car67Modules,
  author   = {Cartier, Pierre},
  fjournal = {Comptes Rendus Hebdomadaires des S\'{e}ances de l'Acad\'{e}mie des Sciences. S\'{e}ries A et B},
  issn     = {0151-0509},
  journal  = {C. R. Acad. Sci. Paris S\'{e}r. A-B},
  mrnumber = {218362},
  pages    = {A129--A132},
  title    = {Modules associ\'{e}s \`a un groupe formel commutatif. {C}ourbes typiques},
  volume   = {265},
  year     = {1967}
}

@article{dJon93,
  author   = {de Jong, A. J.},
  doi      = {10.1007/BF01232664},
  fjournal = {Inventiones Mathematicae},
  issn     = {0020-9910},
  journal  = {Invent. Math.},
  number   = {1},
  pages    = {89--137},
  title    = {Finite locally free group schemes in characteristic {$p$} and {D}ieudonn\'{e} modules},
  volume   = {114},
  year     = {1993},
    MRNUMBER = {1235021},
}

@article{Dri24,
  author   = {Drinfeld, Vladimir},
  doi      = {10.4310/pamq.2024.v20.n1.a7},
  fjournal = {Pure and Applied Mathematics Quarterly},
  issn     = {1558-8599,1558-8602},
  journal  = {Pure Appl. Math. Q.},
  mrnumber = {4734873},
  number   = {1},
  pages    = {233--305},
  title    = {A 1-dimensional formal group over the prismatization of {${\rm Spf}\,\Bbb Z_p$}},
  volume   = {20},
  year     = {2024}
}

@book{Haz78,
  author    = {Hazewinkel, Michiel},
  mrnumber  = {506881},
  publisher = {Academic Press, Inc.},
  series    = {Pure and Applied Mathematics},
  title     = {Formal groups and applications},
  volume    = {78},
  year      = {1978}
}

@book{Mes72,
  author    = {Messing, William},
  fseries   = {Lecture Notes in Mathematics},
  mrnumber  = {347836},
  pagetotal = {iii+190},
  publisher = {Springer},
  series    = {Lecture Notes in Math.},
  title     = {The crystals associated to {B}arsotti-{T}ate groups: with applications to abelian schemes},
  volume    = {264},
  year      = {1972}
}

@book{Oor66,
  author    = {Oort, Frans},
  fseries   = {Lecture Notes in Mathematics},
  mrnumber  = {213365},
  pagetotal = {vi+133},
  publisher = {Springer},
  series    = {Lecture Notes in Math.},
  title     = {Commutative group schemes},
  volume    = {15},
  year      = {1966}
}

@article{Oor74,
  author   = {Oort, Frans},
  fjournal = {Indagationes Mathematicae (Proceedings)},
  journal  = {Indag. Math.},
  mrnumber = {354694},
  number   = {3},
  pages    = {284-292},
  title    = {Dieudonn\'{e} modules of finite local group schemes},
  volume   = {77},
  year     = {1974}
}

@article{RG71,
  author   = {Raynaud, Michel and Gruson, Laurent},
  doi      = {10.1007/BF01390094},
  fjournal = {Inventiones Mathematicae},
  issn     = {0020-9910},
  journal  = {Invent. Math.},
  pages    = {1--89},
  subtitle = {{T}echniques de ``platification'' d'un module},
  title    = {Crit\`eres de platitude et de projectivit\'{e}},
  volume   = {13},
  year     = {1971},
    MRNUMBER = {308104},
}

@book{Sch94,
  author    = {Scheiderer, Claus},
  doi       = {10.1007/BFb0074269},
  fseries   = {Lecture Notes in Mathematics},
  pagetotal = {xxiv+284},
  publisher = {Springer},
  series    = {Lecture Notes in Math.},
  title     = {Real and Etale Cohomology},
  year      = {1994}
}

@book{SGA3I,
  editor     = {Demazure, Michel and Grothendieck, Alexandre},
  fseries    = {Lecture Notes in Mathematics},
  mrnumber   = {274458},
  publisher  = {Springer},
  series     = {Lecture Notes in Math.},
  shorthand  = {SGA3},
  subtitle   = {Tome I: {P}ropri\'et\'es g\'en\'erales des sch\'emas en groupes},
  title      = {Sch\'emas en groupes},
  titleaddon = {S\'eminaire de G\'eom\'etrie Alg\'ebrique du Bois Marie 1962/64 (SGA 3)},
  volume     = {151},
  year       = {1970}
}

@misc{stacks-project,
  author       = {{Stacks project authors}},
  howpublished = {\url{https://stacks.math.columbia.edu}},
  title        = {The Stacks project},
  year         = {2025}
}

@article{TO70,
  author   = {Tate, John and Oort, Frans},
  fjournal = {Annales Scientifiques de l'\'{E}cole Normale Sup\'{e}rieure. Quatri\`eme S\'{e}rie},
  issn     = {0012-9593},
  journal  = {Ann. Sci. \'{E}cole Norm. Sup. (4)},
  mrnumber = {265368},
  pages    = {1--21},
  title    = {Group schemes of prime order},
  volume   = {3},
  year     = {1970}
}

@book{Zin84,
  author    = {Zink, Thomas},
  note      = {With English, French and Russian summaries},
  pagetotal = {124},
  publisher = {BSB B. G. Teubner Verlagsgesellschaft},
  series    = {Teubner-Texte zur Mathematik [Teubner Texts in Mathematics]},
  title     = {Cartiertheorie kommutativer formaler {G}ruppen},
  volume    = {68},
  year      = {1984},
    MRNUMBER = {767090},
}

@book{MM74,
  author    = {Mazur, Barry and Messing, William},
  publisher = {Springer},
  series    = {Lecture Notes in Math.},
    fseries = {Lecture Notes in Mathematics},
  title     = {Universal extensions and one dimensional crystalline cohomology},
volume = {370},
  year      = {1974}
}

\end{document}